\newtheorem{assumption}{Assumption}
\newcommand{\tr}{\operatorname{tr}}
\newcommand{\Real}{\operatorname{Re}}
\newcommand{\Var}{\operatorname{Var}}
\newcommand{\req}[1]{Eq.\,(\ref{#1})}
\begin{document}

\title{ Uncertainty Quantification for Markov Processes via Variational Principles and Functional Inequalities}

\titlerunning{Uncertainty Quantification for Markov Processes}       

\author{Jeremiah Birrell         \and
        Luc Rey-Bellet
}

\institute{Jeremiah Birrell \at
              Department of Mathematics and Statistics,\\
University of Massachusetts Amherst\\
Amherst, MA, 01003, USA\\              
              \email{birrell@math.umass.edu}           
           \and
           Luc Rey-Bellet \at
Department of Mathematics and Statistics,\\
University of Massachusetts Amherst\\
Amherst, MA, 01003, USA \\
 \email{luc@math.umass.edu}       }

\date{}

\maketitle

\begin{abstract}
Information-theory based variational principles have proven effective at providing scalable uncertainty quantification (i.e. robustness) bounds for quantities of interest in the presence of nonparametric model-form uncertainty.  In this work, we combine such variational formulas with functional inequalities (Poincar{\'e}, $\log$-Sobolev, Liapunov functions) to derive explicit uncertainty quantification bounds for time-averaged observables, comparing a Markov process to a second (not necessarily Markov) process. These bounds are well-behaved in the infinite-time limit and apply to steady-states of both discrete and continuous-time Markov processes. 

\keywords{uncertainty quantification \and Markov process \and relative entropy  \and  Poincar\'e inequality \and $\log$-Sobolev inequality \and Liapunov function \and Bernstein inequality }
 \subclass{ 47D07   \and 39B72 \and 60F10  \and 60J25      }
\end{abstract}

\section{Introduction}
Information-theory based variational principles have proven effective at providing uncertainty quantification (i.e. robustness) bounds for quantities of interest in the presence of nonparametric model-form uncertainty \cite{chowdhary_dupuis_2013,DKPP,doi:10.1063/1.4789612,doi:10.1137/15M1047271,KRW,DKPR,GKRW,BREUER20131552,doi:10.1287/moor.2015.0776,GlassermanXu}. In the present work, we combine these tools with functional inequalities to obtain improved and explicit  uncertainty quantification (UQ) bounds for both discrete and continuous-time Markov processes on general state spaces.

In our approach, we are given a baseline model, described by  a probability measure $P$; this is the model one has `in hand' and that is amenable to analysis/simulation, but it may contain many sources of error and uncertainty.  Perhaps it depends on parameters with uncertain values (obtained from experiment, Monte Carlo simulation, variational inference, etc.) or is obtained via some approximation procedure (dimension reduction, neglecting memory terms, asymptotic approximation, etc.)   In short, any quantity of interest computed from $P$ has (potentially) significant uncertainty associated with it. Mathematically we chose to express this uncertainty by considering a (nonparametric) family, $\mathcal{U}(P)$, of alternative models that we postulate contains the inaccessible `true' model.

 Loosely stated, given some observable  $F$, the  uncertainty quantification goal considered here is
\begin{align}\label{main_problem1}
\text{\bf Bound the bias }  E_{\widetilde{P}}[F]-E_P[F] \text{ where } \widetilde{P} \in \mathcal{U}_r(P).
\end{align}
The subscript $r$ indicates that the `neighborhood' of alternative models, $\mathcal{U}_r(P)$, is often defined in terms of an error tolerance, $r>0$.  For our purposes, the appropriate notion of neighborhood will be expressed in terms of relative entropy, which can be interpreted as measuring the loss of information due to uncertainties. We do not discuss in full generality  how to choose the tolerance level $r$ but there are cases where one has enough information about the `true' model to choose an appropriate tolerance; see Section \ref{sec:rel_ent}.

\begin{remark}
Note that in \req{main_problem1}, and the remainder of this paper, we consider the case where the quantity-of-interest is the expected value of some function, but extensions of these ideas to other quantities of interest are possible \cite{DKPR}.
\end{remark}

 More specifically, here we work with  a  Markov process $(X_t,P^\mu)$ with initial distribution $\mu$ and stationary distribution $\mu^*$, and compare it to an alternative (not necessarily Markov) process $(X_t,\widetilde P^{\widetilde \mu})$; we study the problem of bounding the bias when the finite-time averages of a real-valued observable, $f$, are computed by sampling from the alternative process:
\begin{align}\label{tildeP_bias1}
\text{{\bf Bound \,\,\,\,\,\,\,\,\,\,\,}} \widetilde{E}^{\widetilde{\mu}}\left[\frac{1}{T}\int_0^Tf(X_t)dt\right]-\int f d\mu^*.
\end{align}
Here, $\widetilde{E}^{\widetilde{\mu}}$ denotes the expectation with respect to $\widetilde P^{\widetilde \mu}$ and similarly for $P^\mu$, $E^\mu$. (Discrete-time processes will  also be considered in Section \ref{sec:discrete}.)

\req{tildeP_bias1} is a (less studied) variant of the classical problem of   the convergence of ergodic averages to the expectation in the stationary distribution:
\begin{align}\label{P_ergodic_avg}
E^{\mu}\left[\frac{1}{T}\int_0^Tf(X_t)dt\right]\to  \int f d\mu^*.
\end{align} 

By combining information on the problems \req{tildeP_bias1} and \req{P_ergodic_avg}, one can also obtain bounds on the finite time sampling error:
\begin{align}\label{finite_T_err}
\text{err}_T= E^{\mu}\left[\frac{1}{T}\int_0^Tf(X_t)dt\right]-\widetilde{E}^{\widetilde{\mu}}\left[\frac{1}{T}\int_0^Tf(X_t)dt\right].
\end{align} 
In this work, we focus on the robustness problem, \req{tildeP_bias1}.

There are classical inequalities addressing \req{main_problem1} (ex: Csiszar-Kullback-Pinsker, Le Cam, Scheff{\'e}, etc.), but they exhibit poor scaling properties with problem dimension and/or in the infinite-time limit, and so are inappropriate for bounding \req{tildeP_bias1}. This problem can be addressed by using  tight information inequalities based on the Gibbs variational principle  that are summarized  in Section \ref{sec:UQ_background}. See \cite{KRW} for a detailed discussion of these issues.

Other recent works have also focused on robustness bounds for Markov processes, often with the goal of providing error bounds for approximate Markov chain Monte Carlo samplers. Bounds on the difference between the distributions (finite-time or stationary) of Markov processes have been obtained in both total-variation  \cite{10.2307/30040824,ferre2013,Alquier2016,Medina-Aguayo2016,pmlr-v32-bardenet14,2015arXiv150803387J} and Wasserstein distances \cite{rudolf2018,2014arXiv1405.0182P,2016arXiv160506420H}.

One benefit of the approach  taken in the present work is that the bounds naturally incorporate information on the specific observable, $f$, under investigation; for instance, the asymptotic variance of $f$ under the baseline model appears in the bound in Theorem \ref{thm:poincare_UQ_reversible}, below.  When the end goal is robustness bounds for time-averages of $f$, this observable specificity  has the potential to yield tighter bounds; see also \cite{2014arXiv1405.0182P} for bounds that incorporate similar information on  the observable.

Our method  utilizes relative entropy to quantify the distance between models.  A  drawback, compared to the total-variation and Wasserstein distance approaches, is the requirement of absolute continuity; however, this is satisfied in many cases of interest. As we will see, one benefit of utilizing relative entropy is that  the alternative model does not have to be a Markov process. The second main innovation here is the use of various functional inequalities, in combination with relative entropy, to bound \req{tildeP_bias1}.  The end result is computable, finite-time UQ bounds  that are also well behaved in the long-time limit. 

\subsection{Summary of Results}
The basis for all of our  bounds is Theorem \ref{thm:process_goal_oriented_bound} in Section \ref{sec:UQ_background}:
\begin{align}\label{goal_oriented_bound_intro}
&\pm\left( \widetilde E^{\widetilde\mu}\left[\frac{1}{T}\int_0^T f( X_t) dt\right] - \mu^*[ f]\right)\\
\leq& \inf_{c>0}\left\{\frac{1}{cT}\Lambda_{P^{\mu^*}_T}^{\widehat{f}_T}(\pm c)+\frac{1}{cT}R(\widetilde P^{\widetilde{\mu}}_T||P^{\mu^*}_T)\right\},\,\,\,\,\,\widehat{f}_T\equiv \int_0^T f(X_t)-\mu^*[f]dt,\notag
\end{align}
along with Corollary \ref{thm:master_UQ_bound} in Section \ref{sec:Kac}:
 \begin{align}\label{master_bound_summary}
&\frac{1}{T}\Lambda_{P^{\mu^*}_T}^{\widehat{f}_T}(\pm c)\leq \kappa(V_{\pm c}),\\
&\kappa(V)\equiv\sup\left\{\langle A[g],g\rangle+\int V|g|^2d\mu^*:g\in D(A,\mathbb{R}),\,\|g\|_{L^2(\mu^*)}=1\right\}, \label{eq:varbound}\\
&V_{\pm c}(x)\equiv \pm c\left(f(x)-\mu^*[f]\right),\,\,\,\mu^*[f]\equiv\int fd\mu^*.
\end{align}
In the above, $\Lambda_{P^{\mu^*}_T}^{\widehat{f}_T}(\pm c)$ is the cumulant generating function of $\widehat{f}_T$ (see \req{Lambda_formula} for details), $R(\widetilde P^{\widetilde{\mu}}_T||P^{\mu^*}_T)$ is the relative entropy of the processes up to time $T$ (see \req{rel_ent_def}),  $\langle\cdot,\cdot\rangle$ denotes the inner product on $L^2(\mu^*)$, and $(A,D(A,\mathbb{R}))$ is the generator of the Markov semigroup for the process $(X_t,P^\mu)$ on $L^2(\mu^*)$. Again, we emphasize that the alternative process, $(X_t,\widetilde{P}^{\widetilde{\mu}})$, does {\em not} need to be Markov; for an example involving semi-Markov processes, see Section \ref{Example:semi-Markov}.

\req{goal_oriented_bound_intro} is  derived by employing the Gibbs variational principle (hence the relation to relative entropy). \req{master_bound_summary}, which is based on a theorem proven in \cite{WU2000435},  results from a connection between the cumulant generating function and the Feynman-Kac semigroup (hence the appearance of the generator,  $A$).  Also, note that the bound is expected to behave well in the limit $T\to \infty$, as $R(\widetilde P^{\widetilde{\mu}}_T||P^{\mu^*}_T)/T$ converges to the  relative entropy rate of the processes, under suitable ergodicity assumptions.

\req{master_bound_summary} allows us to employ our   primary new tool for UQ, that is, functional inequalities.  By functional inequalities, we mean bounds on the generator, $A$, that will yield bounds on $\kappa(V_{\pm c})$; we will cover Poincar{\'e}, $\log$-Sobolev, and $F$-Sobolev inequalities, as well as Liapunov functions. Our results rely heavily on the bounds obtained  in \cite{WU2000435,lezaud:hal-00940906,cattiaux_guillin_2008,Guillin2009,doi:10.1137/S0040585X97986667} where 
concentration inequalities for ergodic averages were obtained.

 The method outlined above leads to  explicit UQ bounds, expressed in terms of the following quantities:
 \begin{enumerate}
 \item Spectral properties of the generator, $A$, of the dynamics of the baseline model, $P$, in the stationary regime (i.e., on $L^2(\mu^*)$); see \req{goal_oriented_bound_intro}-\req{eq:varbound}. This term depends on the chosen observable but does {\em not} depend on the alternative model; functional inequalities are only required for the base model (which is often the simpler model).  This is one of the strengths of the method, though  computing explicit, tight constants for these functional inequalities is  still a very difficult problem in general. 
 \item The path-space relative entropy up to time $T$, $R(\widetilde P^{\widetilde{\mu}}_T||P^{\mu^*}_T)$, of the alternative model with respect to the base. This term depends heavily  on the difference in dynamics between the two models; in particular, a nontrivial bound  requires absolute continuity of the path-space distributions.  This is a drawback of the relative-entropy based method employed here, but it does hold in many cases of interest; see Section \ref{sec:rel_ent} for examples.   
 \end{enumerate}
While most of our results do not assume reversibility of the base process, bounds based on \req{eq:varbound} only involve the symmetric part of the generator and so are generally  less than ideal, or even useless, for many nonreversible systems.  This is a drawback of the approach pursued here.
 \begin{remark}
 For certain hypocoercive systems,  ergodicity can be proven by working with an alternative metric;  see \cite{Dolbeault2009,Dolbeault2015,2018arXiv180804299D}.  It is possible that the functional-inequality based UQ techniques developed below could be adapted to this more general setting; a step in that direction can be found in \cite{2019arXiv190711973B}.
\end{remark}

 For a simple example of the type of result obtained below, consider diffusion on $\mathbb{R}^n$ in a $C^2$ potential, $V$, i.e. the generator is $A=\Delta-\nabla V\cdot V$ and the invariant measure is $\mu^*=e^{-V(x)}dx$.  Suppose the Hessian of $V$ is  bounded below:
\begin{align}\label{hessian_bound}
D^2V(x)\geq \alpha^{-1} I,\,\,\,\alpha>0.
\end{align}
Our results give a Bernstein-type UQ bound for any bounded $f$:
\begin{align}\label{hessian_UQ_bound}
&\pm \left(\widetilde E^{\widetilde \mu}\left[\frac{1}{T}\int_0^Tf(X_t)dt\right]-\mu^*[f]\right)\leq \sqrt{2\sigma^2\eta}+M^{\pm} \eta,\\
&M^\pm =\alpha\|(f-\mu^*[f])^\pm\|_\infty,\,\,\,\sigma^2=2\alpha\Var_{\mu^*}[f],\,\,\,\eta=\frac{1}{T}R(\widetilde{P}^{\widetilde\mu}_T||P^{\mu^*}_T).\notag
\end{align}
(This bound can also be improved by using the asymptotic variance; see Section \ref{sec:reversible}.)  Section \ref{sec:log_sob_diffusion} contains further discussion of diffusions.

The remainder of the paper is structured as follows. Necessary background on UQ for both general measures and processes will be given in Section \ref{sec:UQ_background}, leading up to a connection with both the Feynman-Kac semigroup and  the relative entropy rate.  Relevant properties of the Feynman-Kac semigroup are given  in Section \ref{sec:Kac}, culminating with the bound \req{master_bound_summary}. The use of functional inequalities to obtain explicit UQ bounds from \req{master_bound_summary} will be explored in Section \ref{sec:functional_ineq}.  In Section \ref{sec:discrete} we show how these ideas can be adapted to discrete-time processes.    Finally, the problem of bounding the relative entropy rate will be addressed in Section \ref{sec:rel_ent}.

\section{Uncertainty Quantification for Markov Processes}\label{sec:UQ_background}

\subsection{UQ via Variational Principles}  
In this subsection, we recall several earlier results regarding the variational-principle approach to UQ, as developed in \cite{chowdhary_dupuis_2013,DKPP,BREUER20131552,GlassermanXu,BLM}.  In particular, Proposition \ref{thm:goal_div}, quoted from \cite{DKPP}, will be a critical tool in our derivation of UQ bounds for Markov processes.

Let $P$ be a probability measure on a measurable space $(\Omega,\mathcal{F})$.   We consider the class of random 
variables $f: \Omega \to \mathbb{R}$  with a well-defined and finite moment generating function in a neighborhood of the origin: 

\begin{equation}
\mathcal{E}(P)  \,=\,   \left\{  f:  \Omega \to \mathbb{R} \,:\,  E_P[e^{\pm c_0f}]<\infty \textrm{ for some } c_0 >0 \right\}\,. 
\end{equation}
It is not difficult to prove (see e.g. \cite{DemboZeitouni}) that the cumulant generating function 
\begin{equation}
\Lambda_P^f(c) = \log E_P[e^{ c f}]
\end{equation} 
is a convex function, finite and infinitely differentiable  in some interval $(c_{-},c_{+})$  with $-\infty \le c_{-} < 0 < c_+ \le \infty$  
and equal to $+\infty$ outside of $[c_{-},c_{+}]$.   Moreover if $f \in \mathcal{E}(P)$ then $f$ has moments of 
all orders and we write 
\begin{equation}
\widehat{f} = f - E_P[f]\,
\end{equation} 
for the centered observable of mean $0$.  We will often use the cumulant generating function for the centered observable $\widehat{f}$:
\begin{equation}
\Lambda_P^{\widehat{f}}(c) = \log E_P[e^{ c (f - E_P[f])} ] =  \Lambda_P^f(c) - c E_P[f] \,.
\end{equation}

Recall also the relative entropy (or Kullback-Leibler divergence), defined by 
\begin{equation}\label{rel_ent_def}
R( \widetilde{P} || P)= \left\{ \begin{array}{cl}  E_{\widetilde{P}} \left[\log\left(\frac{d\widetilde{P}}{dP}\right)\right]  & \textrm{ if } \widetilde{P} \ll P  \\  + \infty & \textrm{ otherwise}
\end{array} \,.
\right. 
\end{equation}
It has the property of a divergence, that is $R( \widetilde{P} || P) \ge 0$  and $R( \widetilde P || P) = 0$ if and only if $\widetilde{P}=P$.

A key ingredient in our approach is the Gibbs variational principle which relates the cumulant generating function and relative entropy; see Proposition 1.4.2 in \cite{dupuis2011weak}.
\begin{proposition}[\bf Gibbs Variational Principle]\label{prop:Gibbs} Let $f:\Omega\to\mathbb{R}$ be bounded and measurable.  Then
\begin{equation}\label{gibbs}
\log E_P[e^f] = \sup_{\widetilde{P}: R(\widetilde{P}||P)< \infty} \left\{  E_{\widetilde{P}}[f]  - R(\widetilde{P} || P) \right\}.
\end{equation}  
\end{proposition}

As shown in \cite{chowdhary_dupuis_2013,DKPP}, the Gibbs variational principle  implies the following UQ bounds for the expected values:  (a similar inequality is used in the context of concentration inequalities, see e.g. \cite{BLM}, and was also used independently in  \cite{BREUER20131552,GlassermanXu}). For a proof of the version stated here, see pages 85-86  in \cite{DKPP}.
\begin{proposition}[\bf Gibbs information inequality]\label{thm:goal_div} If $R(\widetilde{P} || P)<\infty$ and  $f\in\mathcal{E}(P)$ then $f\in L^1(\widetilde{P})$ and
\begin{align}\label{goal_oriented_bound}
- \inf_{c >0}  \left\{  \frac{\Lambda_P^{\widehat{f}}(-c)+R(\widetilde{P} ||P)}{c} \right\} 
 \leq E_{\widetilde P}[f]-E_P[f]\leq  
\inf_{c >0}  \left\{  \frac{\Lambda_P^{\widehat{f}}(c)+R(\widetilde{P} ||P)}{c} \right\} \,.
\end{align}
\end{proposition}

\begin{remark}
Note that  even if $R(\widetilde{P}||P)=\infty$, the bound \req{goal_oriented_bound} trivially holds as long as $E_{\widetilde P}[f]$ is defined.  To avoid clutter in the statement of our results, when $R(\widetilde{P}||P)=\infty$ we will consider the bound  to be  satisfied for any $f\in\mathcal{E}(P)$, even if $E_{\widetilde P}[f]$ is undefined.
\end{remark}

Optimization problems  of the form  in \req{goal_oriented_bound} will appear frequently, hence we make the following definition:
\begin{definition}
Given any $\Lambda:\mathbb{R}\to[0,\infty]$ and $\eta>0$, let
\begin{align}
\Xi^\pm(\Lambda,\eta)\equiv  \inf_{c>0}\left\{\frac{\Lambda(\pm c) +\eta}{c}\right\}.
\end{align}
\end{definition}
With this, we can  rewrite the bound \eqref{goal_oriented_bound} as 
\begin{equation}\label{goal_oriented_bound2}
- \Xi^- \left(\Lambda^{\widehat{ f}}_P,R(\widetilde{P}||P)\right) \leq E_{\widetilde{P}}[f]-E_P[f] \leq \Xi^+ \left(\Lambda^{\widehat{f}}_P,R(\widetilde{P}||P)\right)\,.
\end{equation}

\req{goal_oriented_bound2} is the starting point for all   UQ bounds derived in this paper. From  it, we see which quantities must be controlled in order  to make the UQ bounds explicit: the relative entropy and the cumulant generating function.  The former will be discussed in Section \ref{sec:rel_ent}. For Markov processes, the latter can be bounded via a connection with the Feynman-Kac semigroup and functional inequalities; this connection between functional inequalities and UQ bounds is the main focus and innovation of the current work, and we begin discussing it in Section \ref{sec:UQ_Kac}.  First we recall some general properties of the bounds (\ref{goal_oriented_bound2}).

\subsection{Properties of $\Xi^\pm$}
The objects
\begin{align}
\Xi(\widetilde{P}||P;\pm f)\equiv\Xi^\pm \left(\Lambda^{\widehat{ f}}_P,R(\widetilde{P}||P)\right)
\end{align}
appearing in the Gibbs information inequality, \req{goal_oriented_bound2},  have many remarkable properties, of which we recall a few.  
\begin{proposition}\label{prop:div} 
Assume R($\widetilde{P} || P) < \infty$ and  $f\in\mathcal{E}(P)$.  We have:
\begin{enumerate}
\item{\bf (Divergence)} $\Xi(\widetilde{P}||P;f)$ is a divergence,  i.e. $\Xi(\widetilde{P}||P,f) \ge 0$ and\\
 $\Xi(\widetilde{P}||P;f) = 0$ if and only if either $P=\widetilde{P}$  or $f$ is constant $P$ a.s. 
\item{\bf (Linearization)}  If  $R(\widetilde{P}||P)$ is sufficiently small we have  
\begin{equation}\label{Xi_linearization}
\Xi(\widetilde{P}||P,f) =  \sqrt{ 2 {\rm Var}_P[f] R(\widetilde{P}||P)} + O (R(\widetilde{P}||P))\,.
 \end{equation}
\item{\bf (Tightness)}  For $\eta >0$ consider $\mathcal{U}_\eta = \{ \widetilde{P} \,;\, R(\widetilde{P}||P) \le \eta\}$.  There exists $\eta^*$ 
with $0  < \eta^* \le  \infty$ such that for any $\eta < \eta^*$ there exists a measure $P^\eta$ with 
\begin{eqnarray}
\sup_{ \widetilde{P} \in \mathcal{U}_\eta}  \left\{ E_{\widetilde{P}}[f]- E_P[f] \right\} =  E_{ P^\eta} [f] - E_P[f]  \,=\,  \Xi(P^\eta ||P;f) \,.
\end{eqnarray} 
The measure $P^\eta$ has the form 
\begin{equation}
d P^\eta = e^{ c f - \Lambda^f_P(c)} dP,
\end{equation}
where $c=c(\eta)$ is the unique nonnegative solution of  $
R( P^\eta || P ) = \eta$.
\end{enumerate}
\end{proposition}

\begin{proof}
 Items 1 and 2 are proved in \cite{DKPP}; see also \cite{doi:10.1287/moor.2015.0776} for item 2.  Various versions of the proof of item 3  can be found in  in \cite{chowdhary_dupuis_2013} or \cite{DKPP}.  See Proposition 3 in \cite{DKPR}  for a more detailed statement of the result; see also similar results  in \cite{BREUER20131552,doi:10.1111/mafi.12050}.\qed
\end{proof}

The tightness property in Proposition \ref{prop:div} is very attractive and ultimately relies on the presence of the  
cumulant generating function  $\Lambda^{\widehat{f}}_P(c)$, which encodes the entire law of $f$. However, this generally this makes the bound very difficult or impossible to compute explicitly; we will  need to weaken \req{goal_oriented_bound2}  to obtain more usable bounds. Functional inequalities are one tool we will employ (see Section \ref{sec:functional_ineq}).  Another ingredient, which we discuss next, will be explicit bounds on the optimization problem in the definition of $\Xi^\pm(\Lambda,\eta)$. Such an approach was put forward in \cite{GKRW} where various concentration inequalities such as Hoeffding, sub-Gaussian, and Bennett bounds are discussed.  For this paper we will almost exclusively use  the following Bernstein-type bound:
\begin{lemma}\label{lemma:Bernstein} 
Suppose there exist $\sigma>0$, $M^\pm\geq 0$ such that
\begin{align}\label{lambda_Bernstein_bound}
\Lambda(\pm c)\leq \frac{\sigma^2c^2}{2(1-cM^\pm)}
\end{align}
for all $0<c<1/M^\pm$.  Then for all $\eta\geq 0$ we have
\begin{align}\label{Bernstein_inf_bound}
\Xi^\pm(\Lambda,\eta)\leq \sqrt{2\sigma^2\eta}+M^\pm\eta.
\end{align}
Note that  $M^\pm=0$ covers the case of a (one-sided) sub-Gaussian concentration bound.
\end{lemma}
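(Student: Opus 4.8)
The plan is to reduce the infimum over all $c>0$ in the definition of $\Xi^\pm(\Lambda,\eta)$ to an explicit one-variable minimization, and then to evaluate that minimum in closed form; the only mild trick is a change of variables that linearizes the denominator $1-cM^\pm$.

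First I would discard the trivial situation and use monotonicity of the infimum to restrict the range of $c$: since $\Lambda\ge 0$, we have
\[
\Xi^\pm(\Lambda,\eta) \;=\; \inf_{c>0}\left\{\frac{\Lambda(\pm c)+\eta}{c}\right\}\;\le\; \inf_{0<c<1/M^\pm}\left\{\frac{\Lambda(\pm c)+\eta}{c}\right\},
\]
with the convention $1/M^\pm=\infty$ when $M^\pm=0$. On this restricted range the hypothesis \req{lambda_Bernstein_bound} applies verbatim, so
\[
\Xi^\pm(\Lambda,\eta)\;\le\;\inf_{0<c<1/M^\pm}\left\{\frac{\sigma^2 c}{2(1-cM^\pm)}+\frac{\eta}{c}\right\}.
\]

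Next I would introduce the substitution $u=c/(1-cM^\pm)$, equivalently $c=u/(1+uM^\pm)$, which is an increasing bijection from $(0,1/M^\pm)$ onto $(0,\infty)$. Under it $1/c=1/u+M^\pm$, so the quantity to be minimized becomes $\tfrac{\sigma^2 u}{2}+\tfrac{\eta}{u}+M^\pm\eta$. The function $u\mapsto \tfrac{\sigma^2 u}{2}+\tfrac{\eta}{u}$ is minimized over $u>0$ by AM--GM (or elementary calculus) at $u_*=\sqrt{2\eta/\sigma^2}$, with minimum value $\sqrt{2\sigma^2\eta}$. Hence the restricted infimum equals $\sqrt{2\sigma^2\eta}+M^\pm\eta$, which is exactly \req{Bernstein_inf_bound}. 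One checks directly that the associated $c_*=u_*/(1+u_*M^\pm)$ lies in $(0,1/M^\pm)$, and the degenerate cases $\eta=0$ and $M^\pm=0$ are subsumed by the same computation (for $M^\pm=0$ the substitution is simply $u=c$).

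I do not expect a genuine obstacle: the subtle points are merely (i) keeping the constraint $0<c<1/M^\pm$ in force so that \req{lambda_Bernstein_bound} is legitimately applicable at the chosen $c$, and (ii) observing that the change of variables preserves that constraint. The concluding remark about $M^\pm=0$ is then immediate, since in that case \req{lambda_Bernstein_bound} reads $\Lambda(\pm c)\le \sigma^2 c^2/2$, a one-sided sub-Gaussian bound, and the conclusion reduces to $\Xi^\pm(\Lambda,\eta)\le\sqrt{2\sigma^2\eta}$.
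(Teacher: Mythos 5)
Your proof is correct and follows essentially the same route as the paper's (which merely says to bound $\Lambda$ by \req{lambda_Bernstein_bound} and solve the resulting optimization over $0<c<1/M^\pm$); the substitution $u=c/(1-cM^\pm)$ is a clean way to carry out that optimization explicitly.
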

\begin{proof}
Bound $\Lambda$ using \req{lambda_Bernstein_bound} and solve the resulting optimization problem on $0<c<1/M^\pm$.
\qed
\end{proof}

From the point of view of concentration inequalities, the bound \req{lambda_Bernstein_bound} is not very tight; indeed it holds for the 
cumulant generating function $\Lambda^{\widehat{ f}}_P$ of  any random variable $f \in \mathcal{E}(P)$, but explicit constants may be 
hard to come by.  In the context of Markov process it has however been proved to be extremely useful, see \cite{WU2000435,lezaud:hal-00940906,cattiaux_guillin_2008,Guillin2009} and in particular \cite{doi:10.1137/S0040585X97986667}.

Second, we will need a linearization bound, generalizing \req{Xi_linearization}:
\begin{lemma}\label{lemma:asymp_bound}
Let $\Lambda:\mathbb{R}\to[0,\infty]$ be $C^2$ on a neighborhood of $0$, $\Lambda(0)=\Lambda^\prime(0)=0$, and $\Lambda^{\prime\prime}(0)>0$.  Then
\begin{align}\label{asymp_bound}
\inf_{c>0}\left\{\frac{\Lambda(\pm c)+\eta}{c}\right\}\leq \sqrt{2\Lambda^{\prime\prime}(0)\eta}+o(\sqrt{\eta})
\end{align}
as $\rho\searrow 0$. If $\Lambda^{\prime\prime}$ is Lipschitz at $0$ then the error bound improves to $O(\eta)$.
\end{lemma}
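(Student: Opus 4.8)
The plan is to bound the infimum from above by evaluating the objective at a single, near-optimal test point rather than by solving the optimization exactly. Write $a\equiv\Lambda^{\prime\prime}(0)>0$. Since $\Lambda$ is $C^2$ on a neighborhood of $0$ with $\Lambda(0)=\Lambda^\prime(0)=0$, Taylor's theorem gives, for both choices of sign,
\begin{align}
\Lambda(\pm c)=\tfrac{a}{2}c^2+r_\pm(c),\qquad r_\pm(c)=o(c^2)\ \text{ as } c\searrow 0,
\end{align}
and this identity is valid on some fixed interval $(-\delta,\delta)$. The key observation is that the leading part of the objective, $c\mapsto\tfrac{a}{2}c+\tfrac{\eta}{c}$, is minimized over $c>0$ at $c_\eta\equiv\sqrt{2\eta/a}$ with minimum value $\sqrt{2a\eta}=\sqrt{2\Lambda^{\prime\prime}(0)\eta}$; we will simply plug in $c=c_\eta$.

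Concretely, for $\eta$ small enough that $c_\eta<\delta$, I would evaluate
\begin{align}
\frac{\Lambda(\pm c_\eta)+\eta}{c_\eta}=\frac{a}{2}c_\eta+\frac{\eta}{c_\eta}+\frac{r_\pm(c_\eta)}{c_\eta}=\sqrt{2\Lambda^{\prime\prime}(0)\eta}+\frac{r_\pm(c_\eta)}{c_\eta}.
\end{align}
Since $r_\pm(c)=o(c^2)$ and $c_\eta=O(\sqrt\eta)$, the error satisfies $r_\pm(c_\eta)/c_\eta=o(c_\eta)=o(\sqrt\eta)$. As the infimum over $c>0$ is no larger than the value at $c_\eta$, this establishes \req{asymp_bound}.

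For the improvement under the Lipschitz hypothesis, I would examine $r_\pm(c)=\Lambda(\pm c)-\tfrac{a}{2}c^2$: it satisfies $r_\pm(0)=r_\pm^\prime(0)=0$ and $r_\pm^{\prime\prime}(c)=\Lambda^{\prime\prime}(\pm c)-\Lambda^{\prime\prime}(0)=O(|c|)$ when $\Lambda^{\prime\prime}$ is Lipschitz at $0$; integrating twice gives $r_\pm(c)=O(c^3)$, hence $r_\pm(c_\eta)/c_\eta=O(c_\eta^2)=O(\eta)$. There is essentially no obstacle here beyond the bookkeeping. The only point requiring a little care is that the Taylor identity holds only near $0$, so the estimate is asserted only for small $\eta$, exactly as in the statement; and one should stress that we never need the exact minimizer of $c\mapsto(\Lambda(\pm c)+\eta)/c$ — evaluating at the minimizer $c_\eta$ of the quadratic surrogate suffices to produce the upper bound, and the fact that $c_\eta$ is the true minimizer to leading order is what makes the surrogate's minimum value the correct leading term $\sqrt{2\Lambda^{\prime\prime}(0)\eta}$.
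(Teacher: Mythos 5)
Your proof is correct and follows the same route the paper indicates: Taylor expand $\Lambda(\pm c)$ to second order, plug in the near-optimal test point $c_\eta=\sqrt{2\eta/\Lambda''(0)}$ (the minimizer of the quadratic surrogate), and control the remainder, with the Lipschitz hypothesis sharpening $o(c^2)$ to $O(c^3)$. The paper itself defers the details to Theorem 2.8 of \cite{DKPP}; your argument is a self-contained version of that same Taylor-expansion proof.
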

\begin{proof}
The bound follows from Taylor expansion of $\Lambda(c)$; see the proof  of Theorem 2.8 in \cite{DKPP}.
\qed
\end{proof}

\subsection{UQ for Markov Processes}\label{sec:UQ_processes}
One of the main advantages of the Gibbs information inequality, \req{goal_oriented_bound}, over  classical information inequalities (such as the Kullback-Leibler-Czisz\`ar inequality) is how it scales with time when applied to the distributions of processes on path space.  See  \cite{KRW} for a detailed discussion of this issue.  This strength will become apparent as we proceed.

The following assumption details the setting in which we will work for the remainder of this paper:
\begin{assumption}\label{general_process_assump}
Let $\mathcal{X}$ be a Polish space  and suppose we have a  time homogeneous, $\mathcal{X}$-valued, c\`adl\`ag Markov family  $(\Omega,\mathcal{F},\mathcal{F}_t,X_t,P^x)$, $x\in\mathcal{X}$, with transition probability kernel $p_t$ (see the statement of Theorem \ref{theorem:markov_family} in Appendix \ref{app:jump} for the precise definition of a Markov family that we use).

Also assume we have a second probability kernel  $\widetilde P^x$, $x\in\mathcal{X}$,  on $(\Omega,\mathcal{F})$  with $({X}_0)_*\widetilde P^x=\delta_x$ for each $x\in\mathcal{X}$.
\end{assumption}
\begin{remark}
We are {\em not} assuming $(X_t,\widetilde P^x)$ are Markov processes.
  \end{remark}

  One of of the families, $P^x$ or $\widetilde{P}^x$, is thought of as the base model, and the other as some alternative (or approximate) model, but which is which can vary  with the application. From a mathematical perspective, the primary factors distinguishing $P^x$ and $\widetilde{P}^x$ are:
  \begin{enumerate}
  \item Our methods require  information on the spectrum of the generator of the semigroup associated with $p_t$.
  \item $(X_t, P^x)$ must be Markov, but $(X_t,\widetilde P^x)$ can be non-Markovian. 
  \end{enumerate}
    $P^x$ and $\widetilde P^x$ should be chosen with these points in mind; in the remainder of this paper, we will refer to the former as the base model and the latter as the alternative model.

\begin{definition}\label{def:PT}

Given initial distributions $\mu$ and $\widetilde\mu$ on $\mathcal{X}$, we also define the probability measures
\begin{align}
P^{\mu}(\cdot)=\int P^x(\cdot)\mu(dx),\,\,\widetilde P^{\widetilde \mu}(\cdot)=\int \widetilde P^x(\cdot)\widetilde \mu(dx).
\end{align}
Note that Assumption \ref{general_process_assump} implies  that $X_t$ is a Markov process for the space $(\Omega,\mathcal{F}_t, P^{\mu})$ with initial distribution $\mu$ and time-homogeneous transition probabilities $p_t$. 

We will also need the finite-time restrictions, which can be thought of as the distributions on path space up to some $T>0$:
\begin{align}\label{path_measure}
P^x_T\equiv P^x|_{\mathcal{F}_T},\,\, \widetilde P^x_T\equiv \widetilde P^x|_{\mathcal{F}_T},
\end{align}
and similarly for $P^{\mu}_T$ and $\widetilde{P}^{\widetilde{\mu}}_T$. Finally, we let $E^\mu$ denote the expected value with respect to $P^\mu$ and similarly for $\widetilde{E}^{\widetilde\mu}$.
\end{definition}

Now fix a bounded measurable $f:\mathcal{X}\to\mathbb{R}$ (the boundedness assumption will be relaxed later) and an invariant measure $\mu^*$ for $p_t$. As mentioned in the introduction, there are many classical techniques for studying  convergence of the ergodic averages of $f$ under $P^\mu$ to the average in the invariant measure, $\mu^*[f]$. Therefore, in this paper we consider the much less studied problem of bounding the bias when the finite-time averages are computed by sampling from the alternative distribution; see \req{tildeP_bias1}.

\subsection{UQ Bounds via the Feynman-Kac Semigroup}\label{sec:UQ_Kac}

Due to  our interest in the problem (\ref{tildeP_bias1}), we start the $P$-process in the invariant distribution $\mu^*$, while the $\widetilde P$-process is started in an arbitrary distribution $\widetilde\mu$.

Given a bounded measurable function   $f$ on $\mathcal{X}$ and $T>0$, define the  bounded and $\mathcal{F}_T$-measurable function 
  \begin{align}
  f_T=\int_0^T f(X_t) dt.
\end{align}
 Applying the Gibbs information inequality, \req{goal_oriented_bound}, to $f_T$, $\widetilde{P}^{\widetilde\mu}_T$, $P^{\mu^*}_T$ and dividing by $T$ yields:
 \begin{theorem}\label{thm:process_goal_oriented_bound}
 \begin{align}\label{goal_oriented_bound3}
&\pm\left( \widetilde E^{\widetilde\mu}\left[\frac{1}{T}\int_0^T f( X_t) dt\right] - \mu^*[ f]\right)\leq \Xi^\pm\left(\frac{1}{T}\Lambda_{P^{\mu^*}_T}^{\widehat{f}_T},\frac{1}{T}R(\widetilde P^{\widetilde{\mu}}_T||P^{\mu^*}_T)\right),
\end{align}
where 
\begin{align}
\mu^*[f]\equiv\int fd\mu^*,\,\,\,\,\widehat{f}_T\equiv \int_0^T f(X_t)-\mu^*[f]dt.
\end{align}
\end{theorem}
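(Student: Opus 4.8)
The plan is to read \req{goal_oriented_bound3} off from the Gibbs information inequality, Theorem~\ref{thm:goal_div}, applied on the path space $(\Omega,\mathcal{F}_T)$ to the observable $f_T=\int_0^T f(X_t)\,dt$, with $\widetilde P^{\widetilde\mu}_T$ in the role of $\widetilde P$ and the stationary path measure $P^{\mu^*}_T$ in the role of $P$, followed by a division by $T$. The first task is to check that $f_T$ is admissible: since $f$ is bounded and the c\`adl\`ag process $X_t$ is progressively measurable, $(t,\omega)\mapsto f(X_t(\omega))$ is jointly measurable on $[0,T]\times\Omega$, so $f_T$ is well defined, $\mathcal{F}_T$-measurable, and bounded by $T\|f\|_\infty$; hence $f_T\in\mathcal{E}(P^{\mu^*}_T)$ with any $c_0>0$, and Theorem~\ref{thm:goal_div} applies whenever $R(\widetilde P^{\widetilde\mu}_T||P^{\mu^*}_T)<\infty$ (when this relative entropy is infinite, \req{goal_oriented_bound3} holds trivially by the convention stated after Theorem~\ref{thm:goal_div}). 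This yields, in the notation of \req{goal_oriented_bound2},
\begin{equation*}
-\,\Xi^-\!\left(\Lambda_{P^{\mu^*}_T}^{\widehat{f}_T},\,R(\widetilde P^{\widetilde\mu}_T||P^{\mu^*}_T)\right)\le E_{\widetilde P^{\widetilde\mu}_T}[f_T]-E_{P^{\mu^*}_T}[f_T]\le \Xi^+\!\left(\Lambda_{P^{\mu^*}_T}^{\widehat{f}_T},\,R(\widetilde P^{\widetilde\mu}_T||P^{\mu^*}_T)\right).
\end{equation*}

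The second step is to identify the three ingredients in terms of the original (unrestricted) process. Since $f_T$ is $\mathcal{F}_T$-measurable, restricting the measures to $\mathcal{F}_T$ leaves its expectation unchanged, so $E_{\widetilde P^{\widetilde\mu}_T}[f_T]=\widetilde E^{\widetilde\mu}\!\big[\int_0^T f(X_t)\,dt\big]$. For the stationary mean I would invoke Tonelli's theorem together with the invariance of $\mu^*$, which under Assumption~\ref{general_process_assump} gives $(X_t)_*P^{\mu^*}=\mu^*$ for every $t\ge 0$; hence $E_{P^{\mu^*}_T}[f_T]=\int_0^T\mu^*[f]\,dt=T\,\mu^*[f]$. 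In particular $\widehat{f}_T=f_T-T\mu^*[f]=\int_0^T\!\big(f(X_t)-\mu^*[f]\big)\,dt$, which is exactly the centered observable named in the statement, so the displayed chain becomes a two-sided bound on $\pm\big(\widetilde E^{\widetilde\mu}[\int_0^T f(X_t)\,dt]-T\mu^*[f]\big)$.

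The last step is to divide through by $T>0$, using only the elementary homogeneity $\tfrac1T\,\Xi^\pm(\Lambda,\eta)=\Xi^\pm\!\big(\tfrac1T\Lambda,\tfrac1T\eta\big)$, which is immediate from $\Xi^\pm(\Lambda,\eta)=\inf_{c>0}\{(\Lambda(\pm c)+\eta)/c\}$ by pulling $1/T$ out of the numerator; one should note in passing that $\tfrac1T\Lambda_{P^{\mu^*}_T}^{\widehat{f}_T}$ still maps $\mathbb{R}$ into $[0,\infty]$, since a centered cumulant generating function is nonnegative by Jensen's inequality, so the right-hand side is a legitimate instance of $\Xi^\pm$. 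Combining the three steps turns the displayed inequalities into precisely \req{goal_oriented_bound3}. I do not anticipate a genuine obstacle here: the argument is Theorem~\ref{thm:goal_div} specialized to path space plus a rescaling, and the only mildly delicate points are the joint-measurability/Tonelli bookkeeping that makes $f_T$ a bona fide observable with stationary mean $T\mu^*[f]$, and the verification of the integrability hypotheses of Theorem~\ref{thm:goal_div}.
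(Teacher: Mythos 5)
Your proof is correct and follows exactly the route the paper takes: apply the Gibbs information inequality (Theorem~\ref{thm:goal_div}) on $(\Omega,\mathcal{F}_T)$ to $f_T=\int_0^T f(X_t)\,dt$ with $\widetilde P^{\widetilde\mu}_T$ and $P^{\mu^*}_T$, identify $E_{P^{\mu^*}_T}[f_T]=T\mu^*[f]$ via stationarity, and divide by $T$ using the homogeneity of $\Xi^\pm$. The paper states this in one line; your write-up simply fills in the measurability, integrability, and Tonelli bookkeeping.
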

\begin{remark}
Recall the definition
\begin{align}\label{Xi_def}
\Xi^\pm(\Lambda,\eta)=  \inf_{c>0}\left\{\frac{\Lambda(\pm c) +\eta}{c}\right\}.
\end{align}
All of the UQ bounds we obtain will be of the form
 \begin{align}\label{standard_goal_oriented_bound}
&\pm \left(\widetilde E^{\widetilde{\mu}}\left[\frac{1}{T}\int_0^T f( X_t) dt\right]-\mu^*[f]\right)
\leq \Xi^\pm(\Lambda,\eta)
\end{align}
for some  $\Lambda:\mathbb{R}\to[0,\infty]$ and  $\eta>0$; we will refer back to these equations often.
\end{remark}

To produce a more explicit  bound from \req{goal_oriented_bound3}, one needs to bound the cumulant generating function as well as the relative entropy.  The latter will be addressed in Section \ref{sec:rel_ent}. As for the former, observe that  the cumulant generating function can be written
\begin{align}\label{Lambda_formula}
\Lambda_{P_T^{\mu^*}}^{\widehat{f}_T}(\pm c)=&\log\left( \int E^x\left[\exp\left(\pm c\int_0^Tf(X_t)- \mu^*[f]dt\right)\right]\mu^*(dx)\right).
\end{align}
 \req{Lambda_formula} is  related to the Feynman-Kac semigroup on $L^2(\mu^*)$ with potential $V$:
 \begin{align}\label{eq:Kac_def}
\mathcal{P}_t^V[g](x)=E^x\left[g(X_t)\exp\left(\int_0^t V(X_s)ds\right)\right].
\end{align} 

More specifically,
\begin{align}\label{Lambda_bound}
\Lambda_{P_T^{\mu^*}}^{\widehat{f}_T}(\pm c)\leq&\log\left( \|\mathcal{P}_T^{V_{\pm c}}[1]\|_{L^2(\mu^*)}\right),\\
V_{\pm c}(x)\equiv& \pm c\left(f(x)-\mu^*[f]\right),
\end{align}
and so we obtain:
\begin{lemma}
Under Assumption  \ref{general_process_assump},  for any bounded measurable $f:\mathcal{X}\to\mathbb{R}$, \req{standard_goal_oriented_bound} holds with 
\begin{align}\label{Kac_UQ_bound}
\Lambda(\pm c)= \frac{1}{T}\log\left( \|\mathcal{P}_T^{V_{\pm c}}[1]\|_{L^2(\mu^*)}\right),\,\,\,\eta=\frac{1}{T}R(\widetilde P^{\widetilde{\mu}}_T||P^{\mu^*}_T).
\end{align}
\end{lemma}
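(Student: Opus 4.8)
The plan is to deduce the claimed bound directly from Theorem~\ref{thm:process_goal_oriented_bound}, combined with two elementary observations: a Feynman--Kac/Cauchy--Schwarz estimate for the cumulant generating function (namely \req{Lambda_bound}), and the monotonicity of $\Xi^\pm$ in its first argument. The substance is entirely contained in these two steps; everything else is bookkeeping.

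First I would check that Theorem~\ref{thm:process_goal_oriented_bound} applies. Since $f$ is bounded, $\widehat f_T=\int_0^T(f(X_t)-\mu^*[f])\,dt$ is bounded by $2T\|f\|_\infty$, so $f_T\in\mathcal{E}(P^{\mu^*}_T)$ and $E^{\mu^*}[e^{\pm c\widehat f_T}]<\infty$ for every $c$. Theorem~\ref{thm:process_goal_oriented_bound} then gives \req{standard_goal_oriented_bound} with $\Lambda(\pm c)=\frac{1}{T}\Lambda_{P^{\mu^*}_T}^{\widehat f_T}(\pm c)$ and $\eta=\frac{1}{T}R(\widetilde P^{\widetilde\mu}_T\|P^{\mu^*}_T)$. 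The second slot $\eta$ is already the one asserted in the statement, so it remains only to replace $\frac{1}{T}\Lambda_{P^{\mu^*}_T}^{\widehat f_T}$ by the pointwise larger function $c\mapsto\frac{1}{T}\log\|\mathcal{P}_T^{V_{\pm c}}[1]\|_{L^2(\mu^*)}$.

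For that I would establish \req{Lambda_bound}. Writing out \req{Lambda_formula} — using that $X_0\sim\mu^*$ under $P^{\mu^*}$ and that $P^{\mu^*}=\int P^x\,\mu^*(dx)$ — gives
\[
\Lambda_{P^{\mu^*}_T}^{\widehat f_T}(\pm c)=\log\!\int E^x\!\left[\exp\!\left(\int_0^T V_{\pm c}(X_t)\,dt\right)\right]\mu^*(dx)=\log\!\int \mathcal{P}_T^{V_{\pm c}}[1](x)\,\mu^*(dx),
\]
since the inner expectation is exactly $\mathcal{P}_T^{V_{\pm c}}[1](x)$; this function is universally measurable by Assumption~\ref{general_process_assump} and bounded by $e^{T\|V_{\pm c}\|_\infty}$, hence in $L^2(\mu^*)$. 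Because $\mu^*$ is a probability measure, Cauchy--Schwarz (equivalently Jensen's inequality for $t\mapsto t^2$) gives $\int \mathcal{P}_T^{V_{\pm c}}[1]\,d\mu^*\le\|\mathcal{P}_T^{V_{\pm c}}[1]\|_{L^2(\mu^*)}$, and taking logarithms yields \req{Lambda_bound}. Finally, from $\Xi^\pm(\Lambda,\eta)=\inf_{c>0}\frac{\Lambda(\pm c)+\eta}{c}$ it is immediate that $\Lambda_1\le\Lambda_2$ pointwise implies $\Xi^\pm(\Lambda_1,\eta)\le\Xi^\pm(\Lambda_2,\eta)$; applying this with the two functions above, each scaled by $1/T$, and chaining with Theorem~\ref{thm:process_goal_oriented_bound} finishes the proof.

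The only point requiring any care is the measure-theoretic bookkeeping behind the Feynman--Kac representation: that $x\mapsto E^x[\exp(\int_0^T V_{\pm c}(X_t)\,dt)]$ is well defined, universally measurable (so the $\mu^*$-integral is meaningful), and square-integrable against $\mu^*$. All three are guaranteed by the boundedness of $f$ together with the measurability clauses in Assumption~\ref{general_process_assump}, so in the end there is no real obstacle and the proof is short.
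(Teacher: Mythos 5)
Your proof is correct and follows the same route the paper takes: apply Theorem~\ref{thm:process_goal_oriented_bound} (valid here since $f$ bounded implies $f_T\in\mathcal{E}(P^{\mu^*}_T)$), rewrite the cumulant generating function as $\log\int\mathcal{P}_T^{V_{\pm c}}[1]\,d\mu^*$, bound this by $\log\|\mathcal{P}_T^{V_{\pm c}}[1]\|_{L^2(\mu^*)}$ via Cauchy--Schwarz since $\mu^*$ is a probability measure, and use monotonicity of $\Xi^\pm$ in its first argument. The paper leaves these steps implicit (``and so we obtain''); you have simply made the bookkeeping explicit.
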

In the following two sections, we  discuss how  functional inequalities can be used to obtain more explicit bounds on the norm of the Feynman-Kac semigroup.

\section{Bounding the Feynman-Kac Semigroup}\label{sec:Kac}
 The Lumer-Phillips theorem (a variant of the Hille-Yosida theorem) is our  tool of choice for bounding the Feynman-Kac semigroup; see Chapter IX, p. 250 in \cite{Yosida1971} or  Corollary 3.20 in Chapter II of \cite{engel2006short}.  This is the same strategy used in \cite{WU2000435,cattiaux_guillin_2008,doi:10.1137/S0040585X97986667} to obtain concentration inequalities.

First we  state some of the basic properties of the Feynman-Kac semigroup, adapted from  \cite{WU2000435,cattiaux_guillin_2008}.
\begin{proposition}\label{thm:kac}
Let  $V:\mathcal{X}\to\mathbb{R}$ be bounded and measurable and $\mu^*$ be an invariant probability measure for $p_t$.  The operators $\mathcal{P}^V_t$,  $t\geq 0$, defined in \req{eq:Kac_def}, are  bounded linear operators on $L^2(\mu^*)$ that form a strongly continuous semigroup.

If $(A,D(A))$ denotes the generator of $\mathcal{P}_t\equiv \mathcal{P}_t^0$ on $L^2(\mu^*)$ then the generator of $\mathcal{P}_t^V$ on $L^2(\mu^*)$ is $(A+V,D(A))$.
\end{proposition}
\begin{remark}
$D(A)$ consists of complex-valued functions.  We will use  $D(A,\mathbb{R})$ to denote the real-valued functions in the domain of $A$.
\end{remark}

 To bound the norm of the Feynman-Kac semigroup, we use the following Hilbert space version of the Lumer-Phillips theorem (again, see \cite{Yosida1971,engel2006short}, as well as Theorem II.3.23 in \cite{engel2006short} for a proof that \req{eq:A_spectrum_bound} implies $A-\alpha$ is dissipative):
\begin{proposition}\label{thm:Lumer-Phillips}
Let $H$ be a Hilbert space and $Q(t)$ be a strongly continuous semigroup on $H$ with generator $(A,D(A))$.  Suppose that there is an $\alpha\in\mathbb{R}$ such that
\begin{align}\label{eq:A_spectrum_bound}
\Real(\langle Ax,x\rangle)\leq \alpha 
\end{align}
for all $x\in D(A)$ with $\|x\|=1$.  Then $\|Q(t)\|\leq e^{\alpha t}$ for all $t\geq 0$. 
\end{proposition}

 Propositions \ref{thm:kac} and \ref{thm:Lumer-Phillips} together yield a bound on the Feynman-Kac semigroup, in terms of the generator; this result, and generalizations, were proven in \cite{WU2000435} (see Case I in the proof of Theorem 1).
\begin{proposition}\label{corr:kac_bound}
Let $V:\mathcal{X}\to\mathbb{R}$ be bounded and measurable, and for $t\geq 0$ consider the Feynman-Kac semigroup $\mathcal{P}_t^V:L^2(\mu^*)\to L^2(\mu^*)$ with generator $(A+V,D(A))$.

Define
\begin{align}
\kappa(V)=&\sup\left\{\Real(\langle (A+V)[g],g\rangle):g\in D(A), \|g\|_{L^2(\mu^*)}=1\right\}\label{kappa_def1}\\
=&\sup\left\{\langle A[g],g\rangle+\int V|g|^2d\mu^*:g\in D(A,\mathbb{R}),\,\|g\|_{L^2(\mu^*)}=1\right\}\label{kappa_def2},
\end{align}
where $\langle\cdot,\cdot\rangle$ denotes the inner product on $L^2(\mu^*)$.

 Then   the operator norm satisfies the bound
\begin{align}\label{kac_norm_bound}
\|\mathcal{P}_t^V\|\leq e^{t\kappa(V)}
\end{align}
for all $t\geq 0$.

\end{proposition}

Combining \req{kac_norm_bound} with \req{Lambda_bound} and \req{goal_oriented_bound3}, we  obtain UQ bounds that are expressed in terms of the generator of the dynamics of the baseline process and the relative entropy of the alternative process with respect to the base:
\begin{corollary}\label{thm:master_UQ_bound}
Under Assumption  \ref{general_process_assump},  for any bounded measurable $f:\mathcal{X}\to\mathbb{R}$, the UQ bound \req{standard_goal_oriented_bound} holds with 
\begin{align}
\Lambda(\pm c)= \kappa(V_{\pm c}),\,\,\,\,\eta=\frac{1}{T}R(\widetilde P^{\widetilde{\mu}}_T||P^{\mu^*}_T).
\end{align}
\end{corollary}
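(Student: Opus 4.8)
The plan is to chain together the three ingredients already assembled: the process-level Gibbs information inequality (Theorem~\ref{thm:process_goal_oriented_bound}), the Feynman--Kac representation of the cumulant generating function in \req{Lambda_formula}--\req{Lambda_bound}, and the Lumer--Phillips bound on the Feynman--Kac semigroup norm (Corollary~\ref{corr:kac_bound}). The only structural fact needed beyond these is the obvious monotonicity of $\Xi^\pm$ in its first argument: if $\Lambda_1(\pm c)\le \Lambda_2(\pm c)$ for all $c>0$, then $\Xi^\pm(\Lambda_1,\eta)\le \Xi^\pm(\Lambda_2,\eta)$, which is immediate from the definition \req{Xi_def} as an infimum over $c>0$.

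First I would fix $c>0$ and unwind $\Lambda_{P^{\mu^*}_T}^{\widehat f_T}(\pm c)$ using \req{Lambda_formula}. Since $V_{\pm c}(x)=\pm c(f(x)-\mu^*[f])$, the inner expectation in \req{Lambda_formula} is exactly $\mathcal{P}_T^{V_{\pm c}}[1](x)$, so that
\begin{align}
\Lambda_{P^{\mu^*}_T}^{\widehat f_T}(\pm c)=\log\left(\int \mathcal{P}_T^{V_{\pm c}}[1](x)\,\mu^*(dx)\right)=\log\left(\langle \mathcal{P}_T^{V_{\pm c}}[1],1\rangle\right).\notag
\end{align}
Because $\mu^*$ is a probability measure, $\|1\|_{L^2(\mu^*)}=1$, so Cauchy--Schwarz gives $\langle \mathcal{P}_T^{V_{\pm c}}[1],1\rangle\le \|\mathcal{P}_T^{V_{\pm c}}[1]\|_{L^2(\mu^*)}\le \|\mathcal{P}_T^{V_{\pm c}}\|$. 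Here $V_{\pm c}$ is bounded and measurable (as $f$ is bounded), so Theorem~\ref{thm:kac} guarantees $\mathcal{P}_T^{V_{\pm c}}$ is a bounded operator on $L^2(\mu^*)$ with generator $(A+V_{\pm c},D(A))$, and Corollary~\ref{corr:kac_bound} then yields $\|\mathcal{P}_T^{V_{\pm c}}\|\le e^{T\kappa(V_{\pm c})}$. Taking logarithms and dividing by $T$,
\begin{align}
\frac{1}{T}\Lambda_{P^{\mu^*}_T}^{\widehat f_T}(\pm c)\le \kappa(V_{\pm c})\qquad\text{for all }c>0.\notag
\end{align}
Feeding this into the monotonicity of $\Xi^\pm$ and then into Theorem~\ref{thm:process_goal_oriented_bound} (which applies because $f_T=\int_0^T f(X_t)\,dt$ is bounded, hence in $\mathcal{E}(P^{\mu^*}_T)$) produces exactly \req{standard_goal_oriented_bound} with $\Lambda(\pm c)=\kappa(V_{\pm c})$ and $\eta=T^{-1}R(\widetilde P^{\widetilde\mu}_T\|P^{\mu^*}_T)$. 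One can note in passing that testing the variational problem in \req{kappa_def2} against $g\equiv 1$ gives $\kappa(V_{\pm c})\ge 0$, so $c\mapsto\kappa(V_{\pm c})$ is a legitimate argument for $\Xi^\pm$.

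I do not expect a serious obstacle: this is a packaging corollary, and the substantive work lives in Theorems~\ref{thm:process_goal_oriented_bound}, \ref{thm:kac}, \ref{thm:Lumer-Phillips} and Corollary~\ref{corr:kac_bound}. The only points requiring a moment's care are the measurability/integrability bookkeeping (boundedness of $f$ makes every object in sight well defined and makes the Gibbs inequality applicable), keeping the $\pm$ signs aligned between $V_{\pm c}$ and the argument of $\Lambda$, and recording that the relative-entropy term is untouched and simply carried through as $\eta$. When $R(\widetilde P^{\widetilde\mu}_T\|P^{\mu^*}_T)=\infty$ or $\kappa(V_{\pm c})=\infty$ the asserted inequality is trivial, consistent with the convention in the remark following Theorem~\ref{thm:goal_div}.
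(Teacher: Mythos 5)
Your proposal is correct and follows precisely the route the paper indicates (the paper simply states the corollary as a combination of \req{kac_norm_bound}, \req{Lambda_bound}, and \req{goal_oriented_bound3}, so all that was needed was to unwind that chain). You have filled in the relevant details — the Cauchy--Schwarz step behind \req{Lambda_bound}, the operator-norm bound from Corollary~\ref{corr:kac_bound}, the monotonicity of $\Xi^\pm$ in its first argument, and the sanity check $\kappa(V_{\pm c})\ge 0$ via $g\equiv 1$ — accurately and with the signs aligned.
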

From \req{standard_goal_oriented_bound} we see that functional inequalities, by which we mean bounds on the generator $A$ that lead to bounds on $\kappa(V_{\pm c})$, can be used to produce UQ bounds. Also, note that the only remaining $T$-dependence is in the relative entropy term, $R(\widetilde P^{\widetilde{\mu}}_T||P^{\mu^*}_T)/T$.  This will often have a finite limit (the relative entropy rate) as $T\to\infty$;  for examples, see Section \ref{Example:semi-Markov}, as well as \cite{10.2307/3216060},  the supplementary materials to \cite{DKPP}, and Appendix 1 of \cite{kipnis2013scaling}. Hence Corollary \ref{thm:master_UQ_bound} shows that one can expect UQ bounds that are well behaved as $T\to\infty$.

\begin{remark}
 Proposition \ref{corr:kac_bound} is stated for bounded $V$, but it can be extended to certain unbounded $V$ under the additional assumption that the symmetrized Dirichlet form is closable;  see Theorem 1 in \cite{WU2000435}.  However, as noted in Corollary 3 in this same reference (and outlined in Proposition \ref{Kac_log_sobolev} below), that assumption can be avoided in the presence of functional inequalities by working with bounded $V$ and then taking limits; this is the strategy we employ here. 
\end{remark}

\section{ UQ Bounds From Functional Inequalities}\label{sec:functional_ineq}

In this section, we explore the consequences of several important classes of  functional inequalities: Poincar\'e, $\log$-Sobolev, and Liapunov functions. Discussion of $F$-Sobolev inequalities, a generalization of the classical $\log$-Sobolev case, can be found in Appendix \ref{app:F_sobolev}. 
\subsection{Poincar\'e Inequality}
First we consider the case where the generator satisfies a Poincar\'e inequality with constant $\alpha>0$, meaning:
\begin{align}\label{Poincare_def}
\Var_{\mu^*}[g]\leq -\alpha \langle A[g],g\rangle
\end{align}
for all $g\in D(A,\mathbb{R})$.  This can equivalently be written
\begin{align}\label{Poincare_def2}
\Real(\langle A[g],g\rangle)\leq -\alpha^{-1}\|P^\perp g\|^2
\end{align}
for all $g\in D(A)$, where $P^\perp$ is the orthogonal projector onto $1^\perp$.

In the presence of a Poincar{\'e} inequality, Proposition \ref{corr:kac_bound} is most useful when combined with the following perturbation result. A version of this result is contained in \cite{WU2000435}, but we present it here in a slightly more general form.  The proof is given in Appendix \ref{app:Hilbert_space_bounds}.
\begin{lemma}\label{lemma:perturb_bound}
Let $H$ be  a Hilbert space, $A:D(A)\subset H\to H$ be a linear operator, and $B:H\to H$ be a bounded self-adjoint operator.  Suppose there exist $D>0$ and $x_0\in H$ with $\|x_0\|=1$ such that
\begin{align}
\langle Bx_0,x_0\rangle=0\,\,\,\,\,\text{ and }\,\,\,\,\,
\Real(\langle Ax,x\rangle)\leq -D\|P^\perp x\|^2
\end{align}
for all $x\in D(A)$,  where $P^\perp$ is the orthogonal projector onto $x_0^\perp$.

Define
\begin{align}
B^+\equiv \max\left\{\sup_{\|y\|=1}\langle By,y\rangle,0\right\}.
\end{align}
Then for any  $0\leq c <D/B^+$ we have
\begin{align}\label{perturb_bound}
\sup_{x\in D(A),\|x\|=1}\Real(\langle (A+c B)x,x\rangle)\leq\frac{c^2\|Bx_0\|^2}{D-c B^+}.
\end{align}

\end{lemma}
\begin{remark}
The above lemma applies to a general Hilbert space. In this paper, we will apply it to $H=L^2(\mu^*)$ (with the associated $L^2$-inner product), and $x_0=1$ (constant function), in which case $P^\perp[f]=f-\mu^*[f]$.
\end{remark}

The multiplication operator by $V_{\pm 1}$ is a bounded self-adjoint operator and $\langle V_{\pm 1}1,1\rangle=0$.  Therefore  Lemma \ref{lemma:perturb_bound} implies:
\begin{lemma}
For all $0\leq c<1/ (\alpha\|(f-\mu^*[f])^\pm\|_\infty)$ we have
  \begin{align}\label{poincare_kappa_bound}
\kappa(V_{\pm c})\leq \frac{\alpha \Var_{\mu^*}[f]c^2}{1- \alpha\|(f-\mu^*[f])^\pm\|_\infty c}.
\end{align}
\end{lemma}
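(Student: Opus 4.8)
The plan is to read off the claim from Lemma~\ref{lemma:perturb_bound} with $H=L^2(\mu^*)$, with $A$ the generator of the Markov semigroup $\mathcal{P}_t$ on $L^2(\mu^*)$, and with $B$ the operator of multiplication by the bounded real-valued function $V_{\pm 1}=\pm(f-\mu^*[f])$. Since $f$ is bounded and measurable, $V_{\pm 1}\in L^\infty(\mu^*)$, so $B$ is a bounded self-adjoint operator on $L^2(\mu^*)$, as required. For the distinguished unit vector I take $x_0=1$, the constant function, which has $\|x_0\|_{L^2(\mu^*)}=1$ because $\mu^*$ is a probability measure, and for which $x_0^\perp=1^\perp$, so that the orthogonal projector $P^\perp$ in Lemma~\ref{lemma:perturb_bound} is exactly the one appearing in the Poincar\'e inequality \req{Poincare_def2}.

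Next I would verify the two hypotheses of Lemma~\ref{lemma:perturb_bound}. The centering condition holds since $\langle Bx_0,x_0\rangle=\int V_{\pm 1}\,d\mu^*=\pm(\mu^*[f]-\mu^*[f])=0$. The dissipativity estimate $\Real(\langle Ax,x\rangle)\leq -D\|P^\perp x\|^2$ for all $x\in D(A)$ is precisely the Poincar\'e inequality written in the form \req{Poincare_def2}, so I may take $D=\alpha^{-1}$.

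Then I compute the two constants entering \req{perturb_bound}. One has $\|Bx_0\|^2=\int V_{\pm 1}^2\,d\mu^*=\int(f-\mu^*[f])^2\,d\mu^*=\Var_{\mu^*}[f]$. For $B^+=\max\{\sup_{\|y\|=1}\langle By,y\rangle,0\}$, note $\langle By,y\rangle=\int V_{\pm 1}|y|^2\,d\mu^*\leq \|V_{\pm 1}^+\|_\infty\|y\|^2$, with the bound attained in the limit by taking $|y|^2$ an approximate point mass at an essential maximum of $V_{\pm 1}$; hence $B^+=\|V_{\pm 1}^+\|_\infty$, which equals $\|(f-\mu^*[f])^+\|_\infty$ for the $+$ sign and $\|(f-\mu^*[f])^-\|_\infty$ for the $-$ sign, i.e.\ $\|(f-\mu^*[f])^\pm\|_\infty$ in the notation of the statement. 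Plugging $D=\alpha^{-1}$, $B^+=\|(f-\mu^*[f])^\pm\|_\infty$, $\|Bx_0\|^2=\Var_{\mu^*}[f]$ into Lemma~\ref{lemma:perturb_bound}, we get for all $0\leq c<D/B^+=1/(\alpha\|(f-\mu^*[f])^\pm\|_\infty)$ that
\[
\sup_{x\in D(A),\,\|x\|=1}\Real(\langle(A+cB)x,x\rangle)\leq \frac{c^2\Var_{\mu^*}[f]}{\alpha^{-1}-c\|(f-\mu^*[f])^\pm\|_\infty}=\frac{\alpha\Var_{\mu^*}[f]c^2}{1-\alpha\|(f-\mu^*[f])^\pm\|_\infty c}.
\]
Finally, since $cB$ is multiplication by $\pm c(f-\mu^*[f])=V_{\pm c}$, the operator $A+cB$ on $D(A)$ coincides with $A+V_{\pm c}$, and the supremum on the left is exactly $\kappa(V_{\pm c})$ by the definition \req{kappa_def1} (equivalently \req{kappa_def2}); this gives the claimed inequality.

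I do not expect a serious obstacle, since the main work is packaged inside Lemma~\ref{lemma:perturb_bound}. The points requiring care are the sign bookkeeping in identifying $B^+$ with $\|(f-\mu^*[f])^\pm\|_\infty$ on each branch, and the degenerate case $\|(f-\mu^*[f])^\pm\|_\infty=0$ (equivalently $f=\mu^*[f]$ $\mu^*$-a.s.), where $B^+=0$: there the admissible range is all $c\geq 0$ and \req{perturb_bound} reduces to $\kappa(V_{\pm c})\leq \alpha\Var_{\mu^*}[f]c^2$, consistent with reading the stated fraction with denominator $1$.
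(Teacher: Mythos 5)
Your proposal is correct and follows exactly the argument the paper intends: the paper derives this lemma by noting that multiplication by $V_{\pm 1}$ is bounded, self-adjoint, and centered at $x_0=1$, and then invoking Lemma~\ref{lemma:perturb_bound} with $D=\alpha^{-1}$, $\|Bx_0\|^2=\Var_{\mu^*}[f]$, and $B^\pm=\|(f-\mu^*[f])^\pm\|_\infty$, precisely as you do. The identification of $B^+$ with $\|V_{\pm 1}^+\|_\infty$ and the sign bookkeeping on the two branches are carried out correctly.
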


From this, combined with Corollary \ref{thm:master_UQ_bound}, we obtain the following UQ bound:
 \begin{theorem}\label{thm:poincare_UQ}
Under Assumption \ref{general_process_assump}, if $A$ satisfies the Poincar{\'e} inequality, \req{Poincare_def}, then for any bounded measurable $f:\mathcal{X}\to\mathbb{R}$  the bounds \req{standard_goal_oriented_bound} and \req{Bernstein_inf_bound} hold with
\begin{align}
M^\pm=\alpha \|(f-\mu^*[f])^\pm\|_\infty,\,\,\,\sigma^2=2\alpha \Var_{\mu^*}[f],\,\,\, \eta=\frac{1}{T}R(\widetilde P^{\widetilde{\mu}}_T||P^{\mu^*}_T).
\end{align}

\end{theorem}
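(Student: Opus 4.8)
The plan is to assemble Theorem~\ref{thm:poincare_UQ} by chaining together results already established in the excerpt, with the only genuine work being to put the Poincar\'e-derived bound on $\kappa(V_{\pm c})$ into the Bernstein form of Lemma~\ref{lemma:Bernstein}. First I would invoke Corollary~\ref{thm:master_UQ_bound}: under Assumption~\ref{general_process_assump} and for bounded measurable $f$, the UQ bound \req{standard_goal_oriented_bound} holds with $\Lambda(\pm c)=\kappa(V_{\pm c})$ and $\eta=\frac{1}{T}R(\widetilde P^{\widetilde{\mu}}_T||P^{\mu^*}_T)$. Next, using the Poincar\'e inequality \req{Poincare_def} (equivalently \req{Poincare_def2}), the preceding Lemma \eqref{poincare_kappa_bound} gives, for all $0\leq c<1/(\alpha\|(f-\mu^*[f])^\pm\|_\infty)$,
\begin{align}
\kappa(V_{\pm c})\leq \frac{\alpha\Var_{\mu^*}[f]\,c^2}{1-\alpha\|(f-\mu^*[f])^\pm\|_\infty\,c}.
\end{align}

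The key step is then to recognize this as exactly an instance of the hypothesis \req{lambda_Bernstein_bound} of Lemma~\ref{lemma:Bernstein}. Setting $M^\pm=\alpha\|(f-\mu^*[f])^\pm\|_\infty$ and $\sigma^2=2\alpha\Var_{\mu^*}[f]$, one has $\frac{\sigma^2 c^2}{2(1-cM^\pm)}=\frac{\alpha\Var_{\mu^*}[f]\,c^2}{1-\alpha\|(f-\mu^*[f])^\pm\|_\infty\,c}$, so the bound on $\kappa(V_{\pm c})$ is precisely $\Lambda(\pm c)\leq \frac{\sigma^2 c^2}{2(1-cM^\pm)}$ on the interval $0<c<1/M^\pm$. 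Lemma~\ref{lemma:Bernstein} then immediately yields $\Xi^\pm(\Lambda,\eta)\leq \sqrt{2\sigma^2\eta}+M^\pm\eta$, i.e. \req{Bernstein_inf_bound} holds with the stated $M^\pm$, $\sigma^2$, $\eta$. Combined with \req{standard_goal_oriented_bound} from Corollary~\ref{thm:master_UQ_bound}, this is exactly the claimed conclusion.

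One should also check the degenerate case $M^\pm=0$ (i.e.\ $(f-\mu^*[f])^\pm\equiv 0$, so $f$ is essentially one-sided trivial), where the interval $0<c<1/M^\pm$ is all of $(0,\infty)$ and \req{poincare_kappa_bound} reads $\kappa(V_{\pm c})\leq \alpha\Var_{\mu^*}[f]\,c^2$; Lemma~\ref{lemma:Bernstein} still applies with $M^\pm=0$, giving the sub-Gaussian bound $\Xi^\pm\leq\sqrt{2\sigma^2\eta}$, consistent with the theorem. I do not anticipate a serious obstacle here: the substantive content was already discharged in proving the perturbation Lemma~\ref{lemma:perturb_bound} and the $\kappa(V_{\pm c})$ bound \req{poincare_kappa_bound}; the remaining task is the bookkeeping of matching constants between the Poincar\'e-derived rational bound and the Bernstein template, plus recalling the definition $V_{\pm 1}(x)=\pm(f(x)-\mu^*[f])$ so that $\|B x_0\|^2=\|V_{\pm1}\|_{L^2(\mu^*)}^2=\Var_{\mu^*}[f]$ and $B^+=\|(f-\mu^*[f])^\pm\|_\infty$ feed through correctly. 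The mildly delicate point, worth a sentence, is simply confirming that the constant $\Var_{\mu^*}[f]$ appearing in \eqref{poincare_kappa_bound} is the same regardless of the sign choice $\pm$, which it is since $\|V_{+1}\|_{L^2(\mu^*)}=\|V_{-1}\|_{L^2(\mu^*)}$.
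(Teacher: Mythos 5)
Your proposal is correct and follows essentially the same chain the paper uses: Corollary~\ref{thm:master_UQ_bound} supplies \req{standard_goal_oriented_bound} with $\Lambda(\pm c)=\kappa(V_{\pm c})$, the Poincar\'e-derived Lemma (via Lemma~\ref{lemma:perturb_bound}) gives the rational bound \req{poincare_kappa_bound}, and identifying $\sigma^2/2=\alpha\Var_{\mu^*}[f]$ and $M^\pm=\alpha\|(f-\mu^*[f])^\pm\|_\infty$ puts that bound into the Bernstein template of Lemma~\ref{lemma:Bernstein}, yielding \req{Bernstein_inf_bound}. The paper indeed treats the theorem as immediate from these preceding results (``Thus we have shown\dots''), so your bookkeeping of $\|Bx_0\|^2=\Var_{\mu^*}[f]$, $B^+=\|(f-\mu^*[f])^\pm\|_\infty$, and $D=\alpha^{-1}$ is exactly the intended argument.
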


\subsection{Poincar{\'e} Inequality for  Reversible Processes }\label{sec:reversible}
When the combination of $\mu^*$ and $p_t$ are reversible, i.e. the generator $A$ is self-adjoint on $L^2(\mu^*)$, and if a  Poincar\'e inequality, \req{Poincare_def}, also holds with constant $\alpha>0$, then one can obtain a UQ bound in terms of the asymptotic variance of $f$, instead of the variance of $f$ under $\mu^*$.

First, define the Poisson operator
\begin{align}
L:f\to \int_0^\infty \mathcal{P}_t[f] dt,
\end{align}
a bounded linear operator on $L^2_0(\mu^*)\equiv\{f\in L^2(\mu^*):\mu^*[f]=0\}$ with norm bound $\|L\|\leq\alpha$. The asymptotic variance of $f\in L^2(\mu^*,\mathbb{R})$ is defined by
\begin{align}\label{asymp_var_def}
\sigma^2(f)\equiv\langle 2L[f-\mu^*[f]],f-\mu^*[f]\rangle=2\int_0^\infty\left( \int \mathcal{P}_t[f]fd\mu^*-\left(\mu^*[f]\right)^2\right) dt.
\end{align}
Note that  $0\leq\sigma^2(f)\leq 2\alpha\Var_{\mu^*}[f]$.

Using these objects, one can obtain the following Bernstein-type bound.   A simple proof  appears below Remark 2.3 in \cite{doi:10.1137/S0040585X97986667}; we outline the essential ideas below.  See \cite{lezaud:hal-00940906} and  \cite{Guillin2009} for similar earlier results.
 \begin{lemma}\label{lemma:Bernstein_bound_symmetric}
 For all $0<c<1/(\alpha\|(f-\mu^*[f])^\pm\|_\infty)$ we have
\begin{align}\label{Bernstein_bound_symmetric}
\kappa(V_{\pm c})\leq\frac{\sigma^2(f)c^2}{2(1-\alpha \|(f-\mu^*[f])^\pm\|_\infty c)}.
\end{align}
\end{lemma}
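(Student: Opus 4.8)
The plan is to bound $\kappa(V_{\pm c})$ by estimating $\langle A[g],g\rangle + c\int (f-\mu^*[f])^\pm |g|^2 d\mu^*$ over unit-norm $g\in D(A,\mathbb{R})$, exploiting self-adjointness of $A$ to reduce everything to the orthogonal complement $L^2_0(\mu^*)$. Write $g = a\mathbf{1} + h$ with $h = P^\perp g \in L^2_0(\mu^*)$, $a = \mu^*[g]$, so $\|g\|^2 = a^2 + \|h\|^2 = 1$. Since $\langle A[\mathbf{1}],\cdot\rangle = 0$ and $\langle V_{\pm c}\mathbf{1},\mathbf{1}\rangle = 0$, one has $\langle A[g],g\rangle = \langle A[h],h\rangle$ and $\int V_{\pm c}|g|^2 d\mu^* = 2a\int V_{\pm c} h\, d\mu^* + \int V_{\pm c} h^2 d\mu^*$. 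The first key step is to bound $\langle A[h],h\rangle$ from above in terms of $h$; here I would NOT use the crude Poincaré bound $\langle A[h],h\rangle \le -\alpha^{-1}\|h\|^2$ but rather keep the full spectral information: since $A$ is self-adjoint and negative on $L^2_0(\mu^*)$ with $-A \ge \alpha^{-1}$ there (by \req{Poincare_def}), and $L = (-A)^{-1}$ on $L^2_0(\mu^*)$ with $\sigma^2(f) = 2\langle L[\widehat f], \widehat f\rangle$ where $\widehat f = f - \mu^*[f]$.

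The heart of the argument is a completion-of-the-square / resolvent estimate. Fix $c$ and abbreviate $\widehat f_{\pm} = (f - \mu^*[f])^\pm$ and $m = \|\widehat f_\pm\|_\infty$. Using $\int V_{\pm c} h^2 d\mu^* \le cm \|h\|^2$ (pointwise bound $|V_{\pm c}| \le cm$ on the support where it is positive — more precisely bounding by the positive part) and $2a\int V_{\pm c} h\, d\mu^* = 2ca\langle \widehat f, h\rangle$ (since $V_{\pm 1}$ differs from $\widehat f$ only in sign pattern; one must be careful and bound $2a\int(f-\mu^*[f])^\pm h\, d\mu^*$, which after Cauchy–Schwarz on the appropriate half is $\le 2|c||a|\,\|\widehat f\|_{L^2}\|h\|$, but the sharp route uses $\langle \widehat f, h\rangle$ directly). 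Then
\begin{align*}
\langle A[h],h\rangle + \int V_{\pm c}|g|^2 d\mu^* \le -\langle (-A) h, h\rangle + 2ca\langle \widehat f, h\rangle + cm\|h\|^2.
\end{align*}
Now treat this as a quadratic form in $h$: maximize $-\langle (-A-cm)h,h\rangle + 2ca\langle \widehat f, h\rangle$. When $c < 1/(\alpha m)$ the operator $-A - cm$ is bounded below by $\alpha^{-1} - cm > 0$ on $L^2_0(\mu^*)$, so it is invertible there, and the supremum over $h$ (ignoring the constraint $\|h\|^2 = 1-a^2$, i.e. maximizing over all of $L^2_0$) equals $c^2 a^2 \langle (-A-cm)^{-1}\widehat f, \widehat f\rangle$. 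Finally bound $(-A-cm)^{-1} \le \frac{1}{1-\alpha cm}(-A)^{-1} = \frac{1}{1-\alpha cm} L$ as quadratic forms on $L^2_0$ — this follows from $-A \ge \alpha^{-1}$ and operator monotonicity, since $-A - cm \ge (1-\alpha cm)(-A)$. That gives the bound $\le \frac{c^2 a^2}{1-\alpha cm}\langle L\widehat f, \widehat f\rangle = \frac{c^2 a^2 \sigma^2(f)}{2(1-\alpha cm)}$. Since $a^2 \le 1$, taking the supremum over $g$ yields \req{Bernstein_bound_symmetric}.

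The main obstacle I anticipate is the correct handling of the cross term and the positive-part structure of $V_{\pm c}$: the potential is $V_{\pm c} = \pm c(f - \mu^*[f])$, not $\pm c (f-\mu^*[f])^\pm$, and the appearance of the one-sided $\|\cdot\|_\infty$ norm rather than the full sup norm requires a more delicate argument than the naive pointwise bound $|V_{\pm c}| \le c\|\widehat f\|_\infty$. The trick (as in \cite{doi:10.1137/S0040585X97986667}) is to note that one may replace $V_{\pm c}$ by $V_{\pm c} + \lambda$ for a suitable shift without changing $\kappa$ up to an additive constant, or equivalently to split $V_{\pm c} = c\widehat f_{+} - c\widehat f_{-}$ (for the $+$ case) and bound the term $-c\widehat f_-|g|^2 \le 0$ by zero while handling $c\widehat f_+ |g|^2$, so that only the positive part contributes the $m\|h\|^2$ term and the cross term's magnitude is controlled by $\|\widehat f\|_{L^2}$ through $\langle L \widehat f, \widehat f\rangle$. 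A second technical point is justifying the domain manipulations: $A$ self-adjoint implies $D(A,\mathbb{R})$ is dense and the spectral calculus/resolvent bounds $(-A-cm)^{-1} \le \frac{1}{1-\alpha cm}L$ are legitimate on $L^2_0(\mu^*)$; and dropping the constraint $\|h\|^2 = 1-a^2$ in the $h$-maximization only enlarges the supremum, so the inequality goes the right way. Modulo these care points, the computation is the standard Bernstein estimate for reversible Markov processes, and I would present it largely by reference to \cite{doi:10.1137/S0040585X97986667} with the square-completion spelled out.
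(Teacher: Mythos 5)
Your argument is correct and reaches the bound by a route that is genuinely different in organization from the paper's. The paper's proof keeps $g$ intact, splits $\int V g^2 d\mu^*$ as in \req{integral_split}, bounds the cross term $2\mu^*[g]\int (V-\mu^*[V])g\,d\mu^*$ by a Cauchy--Schwarz inequality in the Dirichlet-form metric (picking up the factor $\sqrt{2\sigma^2(V)}\sqrt{\langle -A[g],g\rangle}$), controls the quadratic remainder by $\|V^+\|_\infty\Var_{\mu^*}(g)\leq\alpha\|V^+\|_\infty\langle -A[g],g\rangle$ via Poincar\'e, and then inverts the resulting scalar inequality to write $\langle -A[g],g\rangle\geq h\bigl(\int V g^2 d\mu^*\bigr)$, so $\kappa(V_{\pm c})\leq\sup_r\{cr - h(r)\}$ is finished by a Legendre transform. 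You instead decompose $g=a\mathbf{1}+h$ with $h\in L^2_0(\mu^*)$, absorb the diagonal term into the shifted operator $-A-cm$, complete the square to get $\sup_h = c^2 a^2\langle(-A-cm)^{-1}\widehat f,\widehat f\rangle$, and invoke the operator-monotone bound $(-A-cm)^{-1}\leq(1-\alpha cm)^{-1}(-A)^{-1}=\tfrac{1}{1-\alpha cm}L$ on $L^2_0(\mu^*)$, which is precisely Poincar\'e ($-A\geq\alpha^{-1}$) rearranged. The inputs are identical --- Poincar\'e, self-adjointness, the Poisson operator $L$ yielding $\sigma^2(f)=2\langle L\widehat f,\widehat f\rangle$, and the one-sided bound $V_{\pm c}\leq cm$ --- but your version replaces the scalar Legendre transform with a transparent resolvent estimate and is arguably cleaner, whereas the paper's scalar route is more modular: it generalizes directly to the Liapunov variant (Theorem \ref{thm:Liap}), where the diagonal term is controlled by \req{liap_int_bound} rather than the crude $L^\infty$ bound and a shifted-resolvent argument does not drop in as readily. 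Two small notes on your write-up: the cross term is $\pm 2ca\langle\widehat f,h\rangle$ with the sign depending on the branch, but this is harmless since the completed-square value $\langle Q^{-1}w,w\rangle$ is insensitive to the sign of $w$; and, as you observe, relaxing $h$ from $D(A,\mathbb{R})\cap L^2_0$ with $\|h\|^2=1-a^2$ to the full form domain of $-A$ on $L^2_0$ only enlarges the supremum, so the inequality goes the right way --- the worry about the positive-part structure you raise at the end is in fact already handled by your own calculation and needs no extra shift trick.
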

\begin{proof}
The cases where $\sigma^2(f)=0$ or one of $\|(f-\mu^*[f])^\pm\|_\infty=0$ are trivial, so suppose not. Using the self-adjoint functional calculus, one can see that $L$ inverts $A$ on $D(A)\cap L^2_0(\mu^*)$ and
\begin{align}
\left|\int fgd\mu^*\right|\leq \left(\int -A[g]gd\mu^*\right)^{1/2}\left(\int -L[f]fd\mu^*\right)^{1/2}
\end{align}
for all real-valued $f\in L^2_0(\mu^*)$, $g\in D(A,\mathbb{R})$.  

Hence, for any $g\in D(A,\mathbb{R})$ with $\|g\|_{L^2(\mu^*)}=1$ and any bounded, measurable $V$ (not necessarily related to $f$ at this point):
\begin{align}\label{integral_split}
&\int Vg^2d\mu^*\!=\!\!\int\! V(g-\mu^*[g])^2d\mu^*\!+2\mu^*[g]\!\int\! (V-\mu^*[V])gd\mu^*\!+\mu^*[V]\mu^*[g]^2\notag\\
\leq&\|V^+\|_\infty\Var_{\mu^*}[g]+\sqrt{2\sigma^2(V)}\sqrt{\langle -A[g],g\rangle}+\mu^*[V].
\end{align}
Using the Poincar{\'e} inequality and solving for $\langle -A[g],g\rangle$ gives
\begin{align}\label{Poincare_intermediate1}
&\langle -A[g],g\rangle\geq h\left(\int (V-\mu^*[V])g^2d\mu^*\right),\\
&h(r)\equiv 1_{r\geq 0}\frac{\sigma^2(V)}{2(M^\pm)^2}\left(\left(1+\frac{2M^+  }{\sigma^2(V)}r\right)^{1/2}-1\right)^2,\,\,\, M^+\equiv \alpha \|V^+\|_\infty.\notag
\end{align}
Letting $V=V_{\pm 1}=\pm(f-\mu^*[f])$ in \req{Poincare_intermediate1} and using the result to bound $\kappa$, \req{kappa_def2}, results in
\begin{align}
\kappa(V_{\pm c})\leq \sup_{r\in\mathbb{R}}\{cr-h(r)\}.
\end{align}
\req{Bernstein_bound_symmetric} then follows from solving the optimization problem.
\qed
\end{proof}

As with Theorem \ref{thm:poincare_UQ}, the Bernstein-type bound, \req{Bernstein_bound_symmetric}, implies a UQ bound:
\begin{theorem}\label{thm:poincare_UQ_reversible}
Under Assumption \ref{general_process_assump}, if the generator satisfies the Poincar\'e inequality \req{Poincare_def} and is self-adjoint on $L^2(\mu^*)$ then  for any bounded measurable $f:\mathcal{X}\to\mathbb{R}$ the bounds \req{standard_goal_oriented_bound} and \req{Bernstein_inf_bound} hold with
\begin{align}
M^\pm=\alpha \|(f-\mu^*[f])^\pm\|_\infty,\,\,\,\sigma^2=\sigma^2(f),\,\,\, \eta=\frac{1}{T}R(\widetilde P^{\widetilde{\mu}}_T||P^{\mu^*}_T).
\end{align}
\end{theorem}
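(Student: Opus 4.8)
The plan is to obtain the theorem by chaining three results already established in the excerpt, with the reversibility hypothesis entering only through the improved (asymptotic) variance in Lemma~\ref{lemma:Bernstein_bound_symmetric}. First I would invoke Corollary~\ref{thm:master_UQ_bound}: for any bounded measurable $f$, the bound \req{standard_goal_oriented_bound} holds with $\Lambda(\pm c)=\kappa(V_{\pm c})$ and $\eta=\frac{1}{T}R(\widetilde P^{\widetilde\mu}_T\|P^{\mu^*}_T)$. So it remains only to control $\kappa(V_{\pm c})$ and then evaluate the one-dimensional optimization defining $\Xi^\pm$.

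For the control of $\kappa$, the standing hypotheses of Theorem~\ref{thm:poincare_UQ_reversible} (generator self-adjoint on $L^2(\mu^*)$ and Poincar\'e inequality \req{Poincare_def} with constant $\alpha$) are precisely those under which Lemma~\ref{lemma:Bernstein_bound_symmetric} is stated, so that lemma applies directly and gives, for all $0<c<1/M^\pm$ with $M^\pm=\alpha\|(f-\mu^*[f])^\pm\|_\infty$,
\[
\kappa(V_{\pm c})\leq\frac{\sigma^2(f)c^2}{2\bigl(1-M^\pm c\bigr)}.
\]
This is exactly the hypothesis \req{lambda_Bernstein_bound} of Lemma~\ref{lemma:Bernstein} with $\sigma^2=\sigma^2(f)$. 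Before applying that lemma one should also note that $\kappa(V_{\pm c})\geq\langle A[1],1\rangle+\int V_{\pm c}\,d\mu^*=0$, by testing the supremum in \req{kappa_def2} against $g=1$ (a constant is in $D(A,\mathbb{R})$ with $A[1]=0$, and $\int V_{\pm c}\,d\mu^*=0$ since $\mu^*$ centers $f$), so $\Lambda(\pm c)=\kappa(V_{\pm c})$ indeed takes values in $[0,\infty]$ as required in the definition of $\Xi^\pm$.

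Finally, Lemma~\ref{lemma:Bernstein} yields $\Xi^\pm(\Lambda,\eta)\leq\sqrt{2\sigma^2\eta}+M^\pm\eta$, i.e. \req{Bernstein_inf_bound}; combining this with \req{standard_goal_oriented_bound} from the first step produces the asserted Bernstein-type UQ bound with the stated $M^\pm$, $\sigma^2=\sigma^2(f)$, and $\eta$. The degenerate cases $\sigma^2(f)=0$ or $\|(f-\mu^*[f])^\pm\|_\infty=0$ (the latter forcing $f$ to be $\mu^*$-a.s.\ constant, whence the left-hand side vanishes) are already dispatched inside Lemma~\ref{lemma:Bernstein_bound_symmetric}, and $M^\pm=0$ is the one-sided sub-Gaussian case explicitly covered by Lemma~\ref{lemma:Bernstein}. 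Since every ingredient is prior, there is essentially no obstacle beyond matching hypotheses and these boundary cases; the substantive work lives in Lemma~\ref{lemma:Bernstein_bound_symmetric}, whose proof is where reversibility is genuinely exploited---through the self-adjoint functional calculus and the Poisson operator $L$, which is what allows replacing $2\alpha\Var_{\mu^*}[f]$ by the smaller quantity $\sigma^2(f)$ in the variance slot.
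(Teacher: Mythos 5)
Your proof is correct and follows exactly the route the paper intends: Corollary~\ref{thm:master_UQ_bound} supplies \req{standard_goal_oriented_bound} with $\Lambda(\pm c)=\kappa(V_{\pm c})$, Lemma~\ref{lemma:Bernstein_bound_symmetric} (whose hypotheses match the theorem's) gives the Bernstein-form control of $\kappa$ with $\sigma^2(f)$ in place of $2\alpha\Var_{\mu^*}[f]$, and Lemma~\ref{lemma:Bernstein} then yields \req{Bernstein_inf_bound}. Your additional check that $\kappa(V_{\pm c})\geq 0$ (so that $\Lambda$ is a valid argument for $\Xi^\pm$) and your handling of the degenerate cases are small refinements consistent with, but not spelled out in, the paper's one-line justification.
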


Other variations can be derived using a Liapunov function.  First we need a couple of definitions, taken from Section 4 of \cite{doi:10.1137/S0040585X97986667}. Also, see this reference for further Liapunov function results that could likely be adapted to produce UQ bounds.
\begin{definition}
A measurable function $G:\mathcal{X}\to\mathbb{R}$ is in the $\mu^*$-extended domain of the generator, $D_{e,\mu^*}(A)$, if there is some measurable $g:\mathcal{X}\to\mathbb{R}$ such that $\int_0^t|g|(X_s)ds<\infty$ $P^{\mu^*}$-a.s. and one $P^{\mu^*}$-version of
\begin{align}
M_t(G)\equiv G(X_t)-G(X_0)-\int_0^t g(X_s)ds
\end{align}
is a local $P^{\mu^*}$-martingale.

$U\in D_{e,\mu^*}(A)$ is called a Liapunov function if $U\geq 1$ and there exist a measurable $\phi:\mathcal{X}\to(0,\infty)$ and $b>0$ such that
\begin{align}\label{liap_def}
-\frac{A[U]}{U}\geq \phi-b\,\,\,\mu^*\text{-a.s.}
\end{align}
\end{definition}

As shown in \cite{doi:10.1137/S0040585X97986667}, given a Liapunov function one can derive a bound on $\kappa(V_{\pm c})$; our method then produces a corresponding UQ bound:
\begin{theorem}\label{thm:Liap}
In addition to  Assumption \ref{general_process_assump}, assume the generator, $A$, is self-adjoint on $L^2(\mu^*)$ and satisfies the Poincar\'e inequality \req{Poincare_def}, and that we have a Liapunov function $U$ with $-A[U]/U\geq \phi-b$.

Given an observable $f\in L^2(\mu^*,\mathbb{R})$ with $\|(f-\mu^*[f])^\pm/\phi\|_\infty<\infty$, we have  the UQ bounds \req{standard_goal_oriented_bound} and \req{Bernstein_inf_bound}, where
\begin{align}
M^\pm=(1+\alpha b)\|(f-\mu^*[f])^\pm/\phi\|_\infty,\,\,\,\sigma^2=\sigma^2(f),\,\,\, \eta=\frac{1}{T}R(\widetilde P^{\widetilde{\mu}}_T||P^{\mu^*}_T).
\end{align}

\end{theorem}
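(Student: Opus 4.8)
The plan is to reduce Theorem~\ref{thm:Liap} to the reversible Poincar\'e bound, Lemma~\ref{lemma:Bernstein_bound_symmetric}, by using the Liapunov function $U$ to control the potential $V_{\pm c}$ in terms of the Dirichlet form, in essentially the same way that the asymptotic-variance term was controlled there. The key observation is that, since $U\in D_{e,\mu^*}(A)$ with $-A[U]/U\ge \phi-b$, one has for any $g\in D(A,\mathbb{R})$ a ``twisted'' integration-by-parts / Cauchy--Schwarz estimate of the form
\begin{align}
\int \phi\, g^2\, d\mu^* \leq \int \frac{-A[U]}{U} g^2\, d\mu^* + b\int g^2 d\mu^* \leq \langle -A[g],g\rangle + b\,\|g\|_{L^2(\mu^*)}^2,\notag
\end{align}
where the middle inequality is the standard Liapunov-function lemma (the Lyapunov--Dirichlet inequality $\int (-A[U]/U)\,g^2\,d\mu^* \le \langle -A[g],g\rangle$, valid for $U>0$ in the extended domain; this is the estimate recorded in \cite{doi:10.1137/S0040585X97986667}). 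First I would invoke this inequality to bound $\int |V_{\pm c}|\, g^2\, d\mu^*$: writing $V_{\pm c} = \pm c(f-\mu^*[f])$ and factoring out $\phi$, we get $\int |V_{\pm c}|\,g^2\,d\mu^* \le c\|(f-\mu^*[f])^\pm/\phi\|_\infty \int \phi\, g^2\, d\mu^* \le c\|(f-\mu^*[f])^\pm/\phi\|_\infty\big(\langle -A[g],g\rangle + b\big)$ for $\|g\|=1$.

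Next I would repeat the decomposition used in the proof of Lemma~\ref{lemma:Bernstein_bound_symmetric}: split $\int V_{\pm c} g^2 d\mu^*$ into the centered-$g$ part, the cross term, and the $\mu^*[V_{\pm c}]=0$ part; bound the first using the Liapunov estimate above (this is where $b$ enters, producing the factor $(1+\alpha b)$ after also invoking the Poincar\'e inequality $\Var_{\mu^*}(g)\le \alpha\langle -A[g],g\rangle$), and bound the cross term by $\sqrt{2\sigma^2(V_{\pm 1})}\sqrt{\langle -A[g],g\rangle}$ exactly as before using the Poisson-operator Cauchy--Schwarz inequality. Then solve for $\langle -A[g],g\rangle$, substitute back into the definition \eqref{kappa_def2} of $\kappa$, and optimize; this should yield a Bernstein-type bound $\kappa(V_{\pm c}) \le \sigma^2(f) c^2 / \big(2(1-M^\pm c)\big)$ with $M^\pm = (1+\alpha b)\|(f-\mu^*[f])^\pm/\phi\|_\infty$. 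Finally, feeding this into Corollary~\ref{thm:master_UQ_bound} and Lemma~\ref{lemma:Bernstein} gives \eqref{standard_goal_oriented_bound} and \eqref{Bernstein_inf_bound} with the stated $M^\pm$, $\sigma^2$, $\eta$.

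Two technical points need care. First, the observable $f$ is no longer assumed bounded — only $\|(f-\mu^*[f])^\pm/\phi\|_\infty<\infty$ and $f\in L^2(\mu^*)$ — so the potential $V_{\pm c}$ may be unbounded and Corollary~\ref{corr:kac_bound}/Theorem~\ref{thm:kac} do not apply verbatim. I would handle this by the truncation-and-limit argument flagged in the remark after Corollary~\ref{corr:kac_bound} (and used in Corollary~3 of \cite{WU2000435}): prove the $\kappa$-bound first for bounded truncations $V^N = (V_{\pm c}\wedge N)\vee(-N)$, for which all the semigroup machinery is valid, note that the right-hand side of the $\kappa$-estimate is controlled uniformly in $N$ by the $L^\infty$ bound on $(f-\mu^*[f])^\pm/\phi$ and the $L^2$ norm entering $\sigma^2(f)$, and pass to the limit in the Feynman--Kac bound by monotone/dominated convergence. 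Second, one must justify that $U$ being only in the \emph{extended} domain (and $g$ possibly not bounded) still permits the Lyapunov--Dirichlet inequality; this is the part I expect to be the main obstacle, and I would either cite the corresponding lemma from \cite{doi:10.1137/S0040585X97986667} directly or reprove it by a localization/stopping-time argument on the local martingale $M_t(U)$ together with an application of It\^o's formula to $\log U$, again first for bounded $g$ and then taking limits. Once that inequality is in hand, the rest is the same optimization bookkeeping as in Lemma~\ref{lemma:Bernstein_bound_symmetric}.
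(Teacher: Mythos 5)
Your proposal is correct and follows essentially the same route as the paper's proof: decompose $\int V g^2\,d\mu^*$ as in Lemma \ref{lemma:Bernstein_bound_symmetric}, replace the $\|V^+\|_\infty$ bound on the centered term by the Lyapunov--Dirichlet inequality $\int(-A[U]/U)(g-\mu^*[g])^2\,d\mu^* \leq \langle -A[g],g\rangle$ (the paper cites this as Lemma 5.6 in \cite{Guillin2009}), combine with Poincar\'e to get the $(1+\alpha b)$ factor, solve for $\langle -A[g],g\rangle$, and extend to unbounded $f$ by truncation together with Fatou's lemma and $L^2$-continuity of the asymptotic variance. The one step you flag as a potential obstacle — validity of the Lyapunov--Dirichlet estimate for $U$ in the extended domain — is exactly the cited lemma, so no new argument is needed there.
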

\begin{proof}
First let $V$ be a bounded measurable function. This part of the proof proceeds similarly to that of Lemma \ref{lemma:Bernstein_bound_symmetric}, but rather than taking the supremum of $V^+$ in \req{integral_split}, one instead uses \req{liap_def} to compute the following bound, where $g\in D(A,\mathbb{R})$ with $\|g\|_{L^2(\mu^*)}=1$:
\begin{align}
\int Vg^2d\mu^*\leq &\mu^*[V]+\sqrt{2\sigma^2(V)}\sqrt{\langle -A[g],g\rangle}\\
&+\|V^+/\phi\|_\infty\int \left(-\frac{A[U]}{U}+b\right)(g-\mu^*[g])^2d\mu^*.\notag
\end{align}

Next, use the bound found in Lemma 5.6 in \cite{Guillin2009},
\begin{align}\label{liap_int_bound}
\int -\frac{A[U]}{U}(g-\mu^*[g])^2d\mu^*\leq  \langle-A[g],g\rangle,
\end{align}
 and  proceed as in Lemma \ref{lemma:Bernstein_bound_symmetric} to obtain
\begin{align}
\kappa(\pm c V)\leq \pm c\mu^*[V]+\frac{\sigma^2(V)c^2}{2(1-(1+\alpha b)\|V^\pm /\phi\|_\infty c)}
\end{align}
for all $0<c<1/((1+\alpha b)\|V^\pm /\phi\|_\infty)$.  If $f$ is bounded then applying this to $V=f-\mu^*[f]$ and using Corollary \ref{thm:master_UQ_bound} and Lemma \ref{lemma:Bernstein} gives the claimed UQ bound.
 
 For general $f\in L^2(\mu^*,\mathbb{R})$ with $\|(f-\mu^*[f])^\pm/\phi\|_\infty<\infty$, we employ a similar method to Corollary 3  in \cite{WU2000435}: Define $V=f-\mu^*[f]$ and $V^n=V1_{|V|<n}$ (not to be confused with the $n$th power of $V$).  Applying the above result to $V^n$ and then using Fatou's lemma and $L^2$-continuity of the asymptotic variance gives
 \begin{align}
  \frac{1}{T}\Lambda_{P^{\mu^*}_T}^{\widehat{f}_T}(\pm c)\leq&\frac{1}{T}\log\left(\|\mathcal{P}^{V_{\pm c}}_T[1]\|\right)\leq \liminf_{n\to\infty}\frac{1}{T}\log\left(\|\mathcal{P}^{\pm c V^n}_T[1]\|\right)\\
 \leq& \liminf_{n\to\infty} \left(\pm c\mu^*[ V^n]+\frac{\sigma^2(V^n)c^2}{2(1-(1+\alpha b)\|(f-\mu^*[f])^\pm/\phi\|_\infty c)}\right)\notag\\
 =&\frac{\sigma^2(f)c^2}{2(1-(1+\alpha b)\|(f-\mu^*[f])^\pm/\phi\|_\infty c)}.\notag
 \end{align}
 Having extended the bound on the cumulant generating function to such $f$, the claimed UQ bound follows from Proposition \ref{thm:goal_div}.
 \qed
\end{proof}

\subsection{Poincar{\'e} Inequality Examples}\label{sec:Poincare_ex}
The study of Poincar\'e inequalities has a long history which we do not attempt to recount here.  For a detailed discussion, see \cite{wang2006functional}, which covers Poincar{\'e} inequalities for both continuous-time Markov chains and diffusions.  Criteria for diffusions can also be found, for example, in \cite{bakry2008,BAKRY2008727}.  

The following example illustrates that the Bernstein-type bounds used in this paper can be sharp for Markov processes.
\subsubsection{A simple Liapunov example:  the $M/M/\infty$ queue.}\label{ex:queue}   Following \cite{doi:10.1137/S0040585X97986667},  let us consider the (simple) example of an $M/M/\infty$ queuing system which has infinitely many servers, each   with a service rate $\rho$ and   an arrival rate $\lambda$.  The state space is $\mathbb{N}$ and the generator is given by 
\begin{equation}\label{eq:liapmm}  
A[f](n) = \lambda f(n+1) -( \lambda +\rho n) f(n) +  \rho n f(n-1).
\end{equation} 
The invariant measure $\mu^*$ is a Poisson distribution with parameter $\lambda/\rho$.  An explicit computation shows (see e.g. \cite{Chafai}) 
that $\Var_{\mu^*}[ \mathcal{P}_t f]  \le e^{-2\rho t }  \Var_{\mu^*}[f]$ and thus the Poincar\'e constant is $1/\rho$. 

To construct a Liapunov function take $U(n) = \kappa^n$ with $\kappa >1$; we then  have 
\begin{equation}
- \frac{A[U]}{U}(n) = \rho n ( 1 - \kappa^{-1}) - \lambda (\kappa -1) \,, 
\end{equation}
and we can apply Theorem \ref{thm:Liap} to any function $f$ with $|f|\le C(n +\delta)$ for some $\delta >0$.   

It is instructive to consider further the case of the mean number of customers in the queue, i.e., $f = n$ and  $\widehat{f} = f - \mu^*[f] = n - \lambda/\rho$. 
From \req{eq:liapmm} we obtain 
\begin{equation}
( A + \rho (1 - \kappa^{-1}) \widehat{f})[U](n) = \lambda\frac{(\kappa-1)^2}{\kappa}U(n)  
\end{equation}
and thus $U$ is an eigenvector for $ A + \rho(1-\kappa^{-1})\widehat{f}$ with eigenvalue $\lambda\frac{(\kappa-1)^2}{\kappa}$. 
By the Perron-Frobenius theorem and Rayleigh's principle we obtain that 
\begin{equation}\label{eq:queue_Lambda_def}
\Lambda(c) \equiv \lim_{T\to \infty} T^{-1}\Lambda_{P_T^{\mu^*}}^{\widehat{f}_T}( c)
\end{equation}
is the maximal eigenvalue of $A + c \widehat{f}$ and thus $\Lambda\left( \rho(1-\kappa^{-1})\right) = \lambda\frac{(\kappa-1)^2}{\kappa}$ or 
equivalently 
$ \Lambda(c) = \frac{\lambda c^2}{\rho^2( 1 - c \rho^{-1})}$. 
Since $A \widehat{f}(n) = \lambda - \rho n$ we can solve the Poisson equation:   $(-A)^{-1} \widehat{f} = \widehat{f}/\rho$ and thus the asymptotic variance is $\sigma^2(f) = 2 \langle (-A)^{-1} \widehat{f}\,,\, \widehat{f}\rangle = 2\rho  ^{-1}\Var_{\mu^*} [f] = 2\lambda/\rho^2$. 
As a consequence we have 
\begin{equation}\label{eq:queue_Lambda}
\Lambda(c) = \frac{\sigma^2(f)c^2}{2( 1 - c \rho^{-1})},
\end{equation} 
which shows that Bernstein bounds can be sharp in the context  of Markov processes, contrary to the IID setting.

\subsubsection{Poincar{\'e} Inequality from Exponential Convergence} 
It is well-known that, when the generator, $A$, is self-adjoint, a Poincar{\'e} inequality is equivalent to exponential convergence in the $L^2(\mu^*)$-norm. Here, we discuss a method for deriving a Poincar{\'e} inequality from exponential convergence in alternative norms.

 First, note that one only needs exponential $L^2$-convergence  on a subset with dense span to conclude a Poincar{\'e} inequality (see Lemma 1.2 in \cite{cattiaux:hal-00461085}):
\begin{lemma}\label{sa_poincare_lemma}
Suppose $(A,D(A))$ is self-adjoint, $F\subset L^2(\mu^*)$ has dense span,   and there exists  $\alpha >0$ such that the following holds:\\
For every $f\in F$ there exists $C_f\geq 0$ such that
\begin{align}\label{exp_conv}
\|\mathcal{P}_{t}[f]-\mu^*[f]\|_2\leq C_fe^{- t/\alpha}  \text{ for all $t\geq 0$.}
\end{align}
Then a  Poincar{\'e} inequality, \req{Poincare_def}, holds with constant $\alpha$.
\end{lemma}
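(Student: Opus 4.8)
The plan is to exploit self-adjointness via the spectral theorem. Since $(A, D(A))$ is self-adjoint and $\mathcal{P}_t = e^{tA}$ is a contraction semigroup (being a Markov semigroup on $L^2(\mu^*)$), the spectrum of $A$ lies in $(-\infty, 0]$, with $0$ an eigenvalue with eigenspace containing the constants. Let $E_\lambda$ denote the projection-valued spectral measure of $-A$, so that $-A = \int_{[0,\infty)} \lambda\, dE_\lambda$ and $\mathcal{P}_t = \int_{[0,\infty)} e^{-t\lambda}\, dE_\lambda$. A Poincar\'e inequality $\Var_{\mu^*}(g) \le -\alpha \langle A[g], g\rangle$ for all $g \in D(A,\mathbb{R})$ is equivalent to the statement that the spectral measure of $-A$ restricted to $L^2_0(\mu^*) = 1^\perp$ is supported in $[\alpha^{-1}, \infty)$, i.e. to a spectral gap of size $\alpha^{-1}$. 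So the goal reduces to showing that \req{exp_conv}, holding on a dense set $D$, forces $E((0, \alpha^{-1})) = 0$.

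The key steps, in order: (1) Fix $f \in D$. Decompose $f = \mu^*[f] + f_0$ with $f_0 = f - \mu^*[f] \in L^2_0(\mu^*)$, and note $\mathcal{P}_t[f] - \mu^*[f] = \mathcal{P}_t[f_0]$ since constants are invariant. By the spectral theorem, $\|\mathcal{P}_t[f_0]\|_2^2 = \int_{(0,\infty)} e^{-2t\lambda}\, d\mu_{f_0}(\lambda)$, where $\mu_{f_0}(\cdot) = \langle E(\cdot) f_0, f_0\rangle$ is the (finite, positive) spectral measure associated to $f_0$, supported on $(0,\infty)$. (2) Suppose for contradiction that $\mu_{f_0}$ puts positive mass on some interval $(0, \alpha^{-1} - \epsilon]$ for some $\epsilon > 0$; call this mass $m > 0$. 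Then for any $t \ge 0$,
\begin{align}
\|\mathcal{P}_t[f_0]\|_2^2 \ge \int_{(0,\alpha^{-1}-\epsilon]} e^{-2t\lambda}\, d\mu_{f_0}(\lambda) \ge e^{-2t(\alpha^{-1}-\epsilon)} m.
\end{align}
(3) Combine with the hypothesis \req{exp_conv}: along $t_n^f \to \infty$ we get $e^{-2t_n^f(\alpha^{-1}-\epsilon)} m \le C_f^2 e^{-2t_n^f/\alpha}$, hence $m \le C_f^2 e^{-2\epsilon t_n^f} \to 0$, a contradiction. Therefore $\mu_{f_0}((0,\alpha^{-1})) = 0$ for every $f \in D$, i.e. $E((0,\alpha^{-1})) f_0 = 0$, equivalently $E((0,\alpha^{-1})) f = 0$ (the constant part lies in the kernel of $-A$, away from $(0,\alpha^{-1})$). (4) Since $D$ has dense span and $E((0,\alpha^{-1}))$ is a bounded (indeed orthogonal) projection, it annihilates all of $L^2(\mu^*)$, so $E((0,\alpha^{-1})) = 0$. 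This is precisely the spectral gap, which is equivalent to the Poincar\'e inequality with constant $\alpha$: for $g \in D(A,\mathbb{R})$, writing $g_0 = g - \mu^*[g]$,
\begin{align}
-\langle A[g], g\rangle = \int_{[\alpha^{-1},\infty)} \lambda\, d\mu_{g_0}(\lambda) \ge \alpha^{-1} \int_{[\alpha^{-1},\infty)} d\mu_{g_0}(\lambda) = \alpha^{-1} \|g_0\|_2^2 = \alpha^{-1}\Var_{\mu^*}(g).
\end{align}

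The main obstacle is somewhat mild here, but worth care: one must be sure that $\mathcal{P}_t$ is genuinely the semigroup generated by $A$ in the spectral-calculus sense (so that $\mathcal{P}_t = e^{tA}$ as self-adjoint operators), and that $\mathcal{P}_t$ maps real functions to real functions and fixes constants — both of which hold since it is a Markov semigroup and $\mu^*$ is invariant. A second subtlety is the interchange of "dense span of $D$" with "$E((0,\alpha^{-1})) = 0$": this is legitimate because a bounded operator vanishing on a dense subspace vanishes identically, and I should note that $D$ is assumed only to have dense span, so one works with finite linear combinations and uses linearity of the map $f \mapsto E((0,\alpha^{-1}))f$ (the exponential-convergence hypothesis need not be stable under linear combinations, but we only need the \emph{conclusion} $E((0,\alpha^{-1}))f = 0$ to be, and it is). I expect no real difficulty beyond bookkeeping.
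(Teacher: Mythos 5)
Your proof takes the natural spectral-theorem route, which the paper itself does not write out (it cites Lemma 3.8 of a reference), and the overall plan is sound: express $\|\mathcal{P}_t[f_0]\|_2^2$ as an integral against the spectral measure of $-A$, derive a contradiction from spectral mass below $\alpha^{-1}$, and pass from $D$ to all of $L^2(\mu^*)$ by density together with boundedness of the spectral projection.

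There is one assertion that does real work and that you state without justification: that $\mu_{f_0}$ has no atom at $\lambda=0$ (you write that it is supported on $(0,\infty)$), and, correspondingly, in your final display that $\int_{[\alpha^{-1},\infty)} d\mu_{g_0} = \|g_0\|_2^2$ for a general $g \in D(A,\mathbb{R})$. Orthogonality of $f_0$ (resp.\ $g_0$) to the constant $1$ does not by itself force $E(\{0\}) f_0 = 0$: a priori $\ker(A)$ could be strictly larger than $\mathbb{C}1$, and in that case the Poincar\'e inequality would in fact be false, so this point must be addressed. Your hypothesis does exclude it, but the argument has to be made: for $f \in D$, if $\mu_{f_0}(\{0\}) = m_0 > 0$ then $\|\mathcal{P}_t[f_0]\|_2^2 \ge m_0$ for all $t$, contradicting the decay along $t_n^f$. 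Hence $E(\{0\}) f_0 = 0$ for every $f \in D$; together with $E(\{0\})1 = 1$ this gives $E(\{0\})f = \mu^*[f]\cdot 1$ on the dense span of $D$ and, by boundedness, on all of $L^2(\mu^*)$. Only now is $\ker(A) = \mathbb{C}1$, so $g_0 \perp 1$ implies $\mu_{g_0}(\{0\}) = 0$ and your closing computation is valid. (Equivalently: use $[0,\alpha^{-1}-\epsilon]$ rather than $(0,\alpha^{-1}-\epsilon]$ in your step (2); your contradiction argument then handles the atom at zero with no extra work, and $E(\{0\})=P_1$ follows by the same density argument.) With this supplement the proof is complete.
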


The following result shows how to obtain a Poincar{\'e} inequality (with an explicit constant) from exponential convergence in a pair of weighted norms.  
\begin{theorem}\label{thm:poinc_exp_conv}
Suppose $(A,D(A))$ is self-adjoint, and $W:\mathcal{X}\to[1,\infty)$ is measurable. Define the following norms on measurable functions $\phi:\mathcal{X}\to \mathbb{R}$ and  signed measures $\pi$ on $\mathcal{X}$:
\begin{align}\label{weighted_norms}
|\phi|_{W}=\sup_{x\in\mathcal{X}}\frac{|\phi(x)|}{W(x)},\,\,\, |\pi|_{W}=\int Wd|\pi|.
\end{align}

  Suppose we have  $\lambda\geq 0, \rho\geq 0$ with at least one nonzero, and that for every bounded measurable $h:\mathcal{X}\to [0,\infty)$ with $\int hd\mu=1$ there exist  $C_h, D_h\in[0,\infty)$ such that for all $t\geq 0$:
\begin{align}\label{weighted_conv1}
|\mathcal{P}_{t}[h]-1|_{W}\leq D_h e^{-\rho t}
\end{align}
and the measure $d\nu=hd\mu^*$ satisfies
\begin{align}\label{weighted_conv2}
|\mathcal{P}^\dagger_{t}[\nu]-\mu^*|_{W}\leq C_h e^{-\lambda t},
\end{align}
where $\mathcal{P}^\dagger_t$ denotes the action of the semigroup $p_t$ on measures.

Then $A$ satisfies the  Poincar{\'e} inequality
\begin{align}
\Var_{\mu^*}[g]\leq -\frac{2}{\lambda+\rho}  \langle A[g],g\rangle\,\,\text{ for all $g\in D(A,\mathbb{R})$.}
\end{align}
\end{theorem}
\begin{proof}
The proof is similar to that of Theorem 2.1 in \cite{BAKRY2008727}.  The key is to take $h$ as above, let $d\nu=hd\mu^*$,  use symmetry of $\mathcal{P}_t$ to compute
\begin{align}
\|\mathcal{P}_t[h]-1\|_2^2=&\int \frac{|\mathcal{P}_t[h]-1|}{W} W|\mathcal{P}_t[h]-1|d\mu^*\leq|\mathcal{P}_t[h]-1|_W|\mathcal{P}_t^\dagger[\nu]-\mu^*|_W,
\end{align}
and then apply Lemma \ref{sa_poincare_lemma}.
\qed
\end{proof}
Exponential convergence in  norms  of the form $|\cdot|_{W}$  can be obtained from the existence of an appropriate Liapunov function (see \cite{HarierNotes,HairerMattingly2011}), making Theorem \ref{thm:poinc_exp_conv} a practical method for obtaining Poincar{\'e} inequalities.

\begin{remark}
The proof of Lemma \ref{sa_poincare_lemma} can be generalized to only require \req{exp_conv} to hold along a sequence $t_n^f$ converging to $\infty$.  Hence, Theorem \ref{thm:poinc_exp_conv} can also be generalized to only require \req{weighted_conv1} and \req{weighted_conv2} along a common sequence $t_n^h\to\infty$. 
\end{remark}

\subsection{$\log$-Sobolev Inequalities}\label{sec:log_sobolev}
Next consider the $\log$-Sobolev inequality with constant $\beta>0$:
\begin{align}\label{log_Sob_def}
\int g^2 \log(g^2)d\mu^{*}\leq -\beta\int A[g]gd\mu^{*}
\end{align}
for all  $g\in D(A,\mathbb{R})$ with $\|g\|_{L^2(\mu^{*})}=1$.

We will employ the following generalization of the Feynman-Kac semigroup for (possibly) unbounded potentials.  The subsequent proposition was shown in Corollary 4 in \cite{WU2000435}.  For completeness purposes, we outline  the proof. 
\begin{proposition}\label{Kac_log_sobolev}
Let  $A$ be the generator of $\mathcal{P}_t$ and $\mu^*$ be an invariant measure for the adjoint semigroup, $\beta>0$, and  assume the $\log$-Sobolev inequality, \req{log_Sob_def}, holds for $\mu^*$ with constant $\beta$.

Finally, suppose that $V\in L^1(\mu^*)$ with $\int e^{\beta V}d\mu^*<\infty$. Then   $\mathcal{P}_t^V:L^2(\mu^*)\to L^2(\mu^*)$, defined by
\begin{align}
\mathcal{P}_t^V[g](x)=E^x\left[g(X_t)\exp\left(\int_0^t V(X_s)ds\right)\right],
\end{align}
are well-defined linear operators and the operator norm satisfies the bound
\begin{align}\label{Kac_bound2}
\|\mathcal{P}_t^V\|\leq \left(\int e^{\beta V}d\mu^*\right)^{ t/\beta}.
\end{align}

\end{proposition}
\begin{proof}
First assume $V$ is bounded.  \req{kac_norm_bound} gives $\|\mathcal{P}_t^V\|\leq e^{t\kappa(V)}$. Applying the $\log$-Sobolev inequality together with  the Gibbs variational principle, \req{gibbs}, we obtain
\begin{align}
\kappa(V)\leq&\beta^{-1}\sup\left\{-\int g^2\log(g^2)d\mu^*+\int \beta V|g|^2d\mu^*:\|g\|_{L^2(\mu^*)}=1\right\}\\
=& \beta^{-1}\!\!\!\!\!\!\!\!\!\!\!\!\sup_{d\nu=g^2d\mu^*:\|g\|_2=1} \!\!\!\!\!\!\!\!\!\!\{E_{\nu}[\beta V]-R(\nu||\mu^*)\}=\beta^{-1}\log\left( \int \exp\left(\beta V\right) d\mu^*\right),\notag
\end{align}
which proves the claim.

The case of unbounded $V$ satisfying the assumptions of the theorem is obtained by letting $V^n=V1_{|V|\leq n}$,  and then using Fatou's lemma, the result for bounded $V$, and  dominated convergence to compute 
\begin{align}
\|\mathcal{P}^V_t\|\leq& \liminf_{n\to\infty}\|\mathcal{P}^{V^n}_t\|\leq \liminf_{n\to\infty}\left(\int e^{\beta V^n}d\mu^*\right)^{ t/\beta}=\left(\int e^{\beta V}d\mu^*\right)^{ t/\beta}.\notag
\end{align}
\qed
\end{proof}

Using Proposition \ref{Kac_log_sobolev}, a UQ bound of the form \req{standard_goal_oriented_bound} can be derived that covers a class of unbounded observables:
\begin{theorem}\label{thm:sobolev_UQ}
In addition to  Assumption \ref{general_process_assump}, assume the $\log$-Sobolev inequality, \req{log_Sob_def}, holds and we have an observable $f\in L^1(\mu^*,\mathbb{R})$ and $c_-<0<c_+$ such that for all $c\in(c_-,c_+)$:
\begin{align}\label{sobolev_int_condition}
\int \exp\left(\beta  V_{c}\right)d\mu^*<\infty.
\end{align}

Then a UQ bound of the form \req{standard_goal_oriented_bound} holds with
\begin{align}\label{Sobolev_lambda}
\Lambda(c)=\left\{ \begin{array}{cl} \frac{1}{\beta}\log\left(\int e^{\beta V_{ c}}d\mu^*\right) & \textrm{ if } c\in(c_-,c_+)  \\  + \infty & \textrm{ otherwise}
\end{array} \,.
\right. 
\end{align}

In addition,  the asymptotic result  \req{asymp_bound} holds with
\begin{align}
\Lambda^{\prime\prime}(0)=\beta \Var_{\mu^*}[f],\,\,\, \eta=\frac{1}{T}R(\widetilde P^{\widetilde{\mu}}_T||P^{\mu^*}_T).
\end{align}
\end{theorem}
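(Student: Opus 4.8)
The plan is to convert the Feynman--Kac estimate of Theorem~\ref{Kac_log_sobolev} into a bound on the cumulant generating function of the time average, feed this into the process-level Gibbs information inequality, and finally check the hypotheses of Lemma~\ref{lemma:asymp_bound} for the asymptotic refinement. Throughout, write $g=f-\mu^*[f]$; since $f\in L^1(\mu^*,\mathbb{R})$ we have $V_{\pm c}=\pm c\,g\in L^1(\mu^*)$, and assumption \req{sobolev_int_condition} is precisely $\int e^{\beta V_{\pm c}}\,d\mu^*<\infty$ for $c\in(c_-,c_+)$, so Theorem~\ref{Kac_log_sobolev} applies to the potentials $V_{\pm c}$.

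\textbf{Bounding the cumulant generating function.} Since $e^{\pm c\widehat f_T}=\exp\!\big(\int_0^T V_{\pm c}(X_t)\,dt\big)$ and, by Definition~\ref{def:PT}, $E^{\mu^*}[\cdot]=\int E^x[\cdot]\,\mu^*(dx)$, Tonelli's theorem gives $E^{\mu^*}[e^{\pm c\widehat f_T}]=\int\mathcal{P}_T^{V_{\pm c}}[1](x)\,\mu^*(dx)=\langle\mathcal{P}_T^{V_{\pm c}}[1],1\rangle$, which is finite for $c\in(c_-,c_+)$ because $\mathcal{P}_T^{V_{\pm c}}[1]\in L^2(\mu^*)\subset L^1(\mu^*)$; choosing $0<c_0<\min\{c_+,-c_-\}$ this also shows $f_T\in\mathcal{E}(P^{\mu^*}_T)$. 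Cauchy--Schwarz (with $\|1\|_{L^2(\mu^*)}=1$) together with the operator-norm bound \req{Kac_bound2} then yields, for $c\in(c_-,c_+)$,
\[
\frac1T\Lambda^{\widehat f_T}_{P^{\mu^*}_T}(\pm c)=\frac1T\log\langle\mathcal{P}_T^{V_{\pm c}}[1],1\rangle\leq\frac1T\log\|\mathcal{P}_T^{V_{\pm c}}\|\leq\frac1\beta\log\Big(\int e^{\beta V_{\pm c}}\,d\mu^*\Big)=\Lambda(\pm c),
\]
while for $c\notin(c_-,c_+)$ the right-hand side equals $+\infty$ by definition; hence $\tfrac1T\Lambda^{\widehat f_T}_{P^{\mu^*}_T}\le\Lambda$ on all of $\mathbb{R}$.

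\textbf{Assembling the bound.} Apply the Gibbs information inequality (Theorem~\ref{thm:goal_div}) to $f_T$, $\widetilde P^{\widetilde\mu}_T$, $P^{\mu^*}_T$ and divide by $T$, exactly as in the derivation of Theorem~\ref{thm:process_goal_oriented_bound}, which is now justified since $f_T\in\mathcal{E}(P^{\mu^*}_T)$ (if $R(\widetilde P^{\widetilde\mu}_T\|P^{\mu^*}_T)=\infty$ the bound is trivial, as noted after Theorem~\ref{thm:goal_div}). Because $\Lambda_1\le\Lambda_2$ pointwise implies $\Xi^\pm(\Lambda_1,\eta)\le\Xi^\pm(\Lambda_2,\eta)$ (immediate from $\Xi^\pm(\Lambda,\eta)=\inf_{c>0}(\Lambda(\pm c)+\eta)/c$), the pointwise estimate above lets us replace $\tfrac1T\Lambda^{\widehat f_T}_{P^{\mu^*}_T}$ by $\Lambda$, which gives exactly \req{standard_goal_oriented_bound} with the stated $\Lambda$ and $\eta=\tfrac1T R(\widetilde P^{\widetilde\mu}_T\|P^{\mu^*}_T)$. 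For the asymptotic statement, note that $\Lambda(c)=\tfrac1\beta\log\int e^{\beta c g}\,d\mu^*=\tfrac1\beta\Lambda^{g}_{\mu^*}(\beta c)$, where $\Lambda^g_{\mu^*}$ is the ordinary cumulant generating function of the centered observable $g$ under $\mu^*$; assumption \req{sobolev_int_condition} says $g\in\mathcal{E}(\mu^*)$, so by the standard properties recalled in Section~\ref{sec:UQ_background}, $\Lambda^g_{\mu^*}$ is $C^\infty$ near $0$ with $\Lambda^g_{\mu^*}(0)=0$, $(\Lambda^g_{\mu^*})'(0)=\mu^*[g]=0$, and $(\Lambda^g_{\mu^*})''(0)=\Var_{\mu^*}[g]=\Var_{\mu^*}[f]$. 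Hence $\Lambda$ is $C^2$ near $0$ with $\Lambda(0)=\Lambda'(0)=0$ and $\Lambda''(0)=\beta\Var_{\mu^*}[f]$ (which is positive unless $f$ is $\mu^*$-a.s.\ constant, the complementary case being trivial), so Lemma~\ref{lemma:asymp_bound} applies and yields \req{asymp_bound}.

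\textbf{Main obstacle.} The delicate point is the unbounded-observable bookkeeping: ensuring that $\mathcal{P}_T^{V_{\pm c}}$ is genuinely a bounded operator on $L^2(\mu^*)$ and that its value on the constant function $1$ agrees $\mu^*$-a.e.\ with $x\mapsto E^x[\exp(\int_0^T V_{\pm c}(X_s)\,ds)]$, so that the identity $E^{\mu^*}[e^{\pm c\widehat f_T}]=\langle\mathcal{P}_T^{V_{\pm c}}[1],1\rangle$ and the subsequent estimate are legitimate, together with verifying $f_T\in\mathcal{E}(P^{\mu^*}_T)$ so that Theorem~\ref{thm:goal_div} can be invoked directly rather than only through its bounded-observable corollary, Theorem~\ref{thm:process_goal_oriented_bound}. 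All of this is supplied by Theorem~\ref{Kac_log_sobolev} and Tonelli's theorem, but it is the place where the hypotheses must be used with care.
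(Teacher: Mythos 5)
Your proof is correct and follows essentially the same route as the paper's: bound the cumulant generating function of $\widehat f_T$ via the Feynman--Kac operator-norm estimate of Theorem~\ref{Kac_log_sobolev}, verify $f_T\in\mathcal{E}(P^{\mu^*}_T)$ so that the Gibbs information inequality applies, and identify $\Lambda(c)=\beta^{-1}\Lambda^{\widehat f}_{\mu^*}(\beta c)$ to read off $\Lambda''(0)=\beta\Var_{\mu^*}[f]$. You supply more detail than the paper (notably the Cauchy--Schwarz step and the explicit verification of the hypotheses of Lemma~\ref{lemma:asymp_bound}), but there is no substantive difference in approach.
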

\begin{proof}
The bound \req{Kac_bound2} implies  $E^{\mu^*}[\exp( cf_T)]<\infty$ for  $c\in(c_-,c_+)$, hence $f_T\in\mathcal{E}(P^{\mu^*}_T)$ and  the Gibbs information inequality, \req{goal_oriented_bound}, applies. As in \req{Lambda_bound}, the cumulant generating function can be bounded using the Feynman-Kac semigroup bound, \req{Kac_bound2}. Combining this with \req{goal_oriented_bound} yields  a bound of the form \req{standard_goal_oriented_bound}, with $\Lambda$ as defined in \req{Sobolev_lambda}.
\qed
\end{proof}
The ideas in this section can be extended to $F$-Sobolev inequalities; see Appendix \ref{app:F_sobolev}.

\subsubsection{Example: Diffusions}\label{sec:log_sob_diffusion}
Let  $V$ be a $C^2$ potential, bounded below, and growing sufficiently fast at infinity. Consider the diffusion with  generator $A=\Delta - \nabla V\cdot \nabla$ and invariant measure $\mu^*(dx)=e^{-V(x)}dx$.   First, it is useful to note that a $\log$-Sobolev inequality with constant $\beta$ implies a Poincar{\'e} inequality with constant $\alpha=\beta/2$ \cite{rothaus1978}. In \cite{CarlenLoss}, the following sufficient condition for a $\log$-Sobolev inequality was obtained:

Suppose $A$ satisfies a Poincar{\'e} inequality  with constant $\alpha$ (references on Poincar{\'e} inequalities can be found in Section \ref{sec:Poincare_ex}) and that
\begin{align}
-C\equiv\inf_x\left\{\frac{1}{4}|\nabla V(x)|^2-\frac{1}{2}\Delta V(x)-\pi e^2V(x)\right\}>-\infty.
\end{align}
Then $A$ satisfies a $\log$-Sobolev inequality with constant
\begin{align}\label{eq:beta_log_sobolev}
\beta=3\alpha+\frac{1}{(1+\alpha |C|)\pi e^2}.
\end{align}

As a second example, if the Hessian of $V$ is bounded below,
\begin{align}
D^2V(x)\geq 2 \beta^{-1} I,
\end{align}
for some $\beta>0$ (unrelated to the $\beta$ in \req{eq:beta_log_sobolev}) then a $\log$-Sobolev inequality holds with constant $\beta$ \cite{BakryEmery}.  A UQ bound corresponding to the associated Poincar{\'e} inequality with constant $\alpha\equiv\beta/2$ was given in the introduction in \req{hessian_UQ_bound}.  

\section{Functional Inequalities and UQ for Discrete-Time Markov Processes} \label{sec:discrete}
In this section we show how the above framework can be applied to obtain UQ bounds for invariant measures of discrete-time Markov processes.

  Again, let $\mathcal{X}$ be a Polish space, and suppose we have  one-step transition probabilities $p(x,dy)$ and $\widetilde p(x,dy)$ on $\mathcal{X}$ with invariant measures $\mu^*$ and $\widetilde\mu^*$ respectively.  Assume that $R(\widetilde\mu^*||\mu^*)<\infty$.
  
    Define the bounded linear operator $\mathcal{P}$ on $L^2(\mu^*)$,
  \begin{align}
\mathcal{P}[f](x)\equiv \int f(y)p(x,dy),
\end{align} 
and similarly for $\widetilde{\mathcal{P}}$ on $L^2(\widetilde\mu^*)$.

 We obtain UQ bounds for expectations in $\mu^*$ and $\widetilde\mu^*$ by constructing continuous-time processes with these same invariant distributions. Specifically, in  Appendix  \ref{app:jump} (see Theorem \ref{theorem:markov_family}) we obtain  c\`adl\`ag Markov families
$(\Omega,\mathcal{F},\mathcal{F}_t,X_t,\{P^x\}_{x\in\mathcal{X}})$ and $(\Omega,\mathcal{F},\mathcal{F}_t,X_t,\{\widetilde P^x\}_{x\in\mathcal{X}})$, whose transition probabilities $p_t$ and $\widetilde p_t$, respectively, (not to be confused with $p$ and $\widetilde p$) satisfy the following:
\begin{enumerate}
\item $\mu^*$ is invariant for $p_t$ for all $t\geq 0$, and similarly for $\widetilde \mu^*$ and $\widetilde p_t$  (see Theorem \ref{thm:markov}).
\item The continuous-time semigroup, $\mathcal{P}_t$, on $L^2(\mu^*)$ constructed from $p_t$ is
\begin{align}\label{T_bounded_gen}
\mathcal{P}_t=\exp(t(\mathcal{P}-I)).
\end{align}
Specifically, $\mathcal{P}_t$ has bounded generator $A=\mathcal{P}-I$ (see Theorem \ref{thm:markov}).  Note that we will also refer to $A$ as the generator of the discrete-time Markov process.
\item The relative entropy  rate of the continuous-time process can be bounded by the relative entropy of the discrete-time process as follows:
\begin{align}\label{discrete_rel_ent}
R(\widetilde P^{\widetilde\mu^*}_T||P^{\mu^*}_T)\leq R(\widetilde\mu^*||\mu^*)+T\int R(\widetilde p(x,\cdot)|| p(x,\cdot))\widetilde\mu^*(dx)
\end{align}
for all $T>0$ (see Theorem \ref{thm:rel_ent} and Corollary \ref{corollary:rel_ent_bound}).
\end{enumerate}
\begin{remark}
While the above construction, and the computation of the relative entropy, is standard for countable state spaces (see the discussion in Section \ref{Example:CTMC}), for our purposes it is necessary to work with general state spaces; to the best of our knowledge, the relative entropy bound \req{discrete_rel_ent} is new in this case.  

General state spaces are of interest, for example, when one is working with Markov chain Monte Carlo samplers, $p(x,dy)$ and $\widetilde p(x,dy)$, for  measures, $\mu^*$ and $\widetilde{\mu}^*$ respectively, on $\mathbb{R}^n$. In this setting, to use our UQ method, one can construct the ancillary continuous-time Markov chain on $\mathbb{R}^n$, as outlined in Appendix \ref{app:jump}, and then apply the relative entropy bound \req{discrete_rel_ent}.
\end{remark}

The Markov families $P^x$ and $\widetilde P^x$, obtained via the above construction, satisfy Assumption \ref{general_process_assump}.  Hence, if the generator $\mathcal{P}-I$ satisfies any of the functional inequalities covered  in Section \ref{sec:Kac} then the general results therein imply UQ bounds for expectations in the invariant measures $\mu^*$ and $\widetilde\mu^*$, with \req{discrete_rel_ent} providing a bound on the relative entropy rate.
\begin{remark}
  Note that here, we must take $\widetilde\mu=\widetilde\mu^*$ for the bounds to apply to the original discrete-time process, otherwise one  obtains UQ bounds for ergodic averages of $f(X_t)$ under the auxiliary continuous-time Markov family.  
\end{remark}

 For example, a Poincar{\'e} inequality for the generator $\mathcal{P}-I$,
\begin{align}\label{discrete_poincare1}
\Real(\langle (\mathcal{P}-I)g,g\rangle)\leq -\alpha^{-1}\|P^\perp g\|_{L^2(\mu^*)}^2,\,\,\, g\in L^2(\mu^*),\,\,\alpha>0,
\end{align}
implies that for any bounded measurable $f:\mathcal{X}\to\mathbb{R}$, we have
\begin{align}\label{discrete_poincare_UQ}
&\pm\left( \widetilde\mu^*[f] -\mu^*[f]\right)\leq \sqrt{2\sigma^2\eta}+M^\pm\eta,\\
&\sigma^2=2\alpha\Var_{\mu^*}[f],\,\,\,M^\pm=\alpha \|(f-\mu^*[f])^\pm\|_\infty,\notag\\
 &\eta=\int R(\widetilde p(x,\cdot)|| p(x,\cdot))\widetilde\mu^*(dx).\notag
\end{align}
This follows from  Theorem \ref{thm:poincare_UQ}, after taking $T\to\infty$ (recall the assumption $R(\widetilde\mu^*||\mu^*)<\infty$).

We illustrate these discrete-time UQ bounds with a pair of examples:
\subsection{Example: Random Walk on a Hypercube}
Consider the symmetric random walk on the $d$-dimensional hypercube $\mathcal{X}=\{-1,1\}^d$ i.e. the transition probabilities are defined by uniformly randomly selecting a coordinate, $i\in\{1,...,d\}$, and then independently and uniformly selecting the sign, $1$ or $-1$, with which to update the selected component.

The uniform measure, $\mu^*$, on $\mathcal{X}$ is invariant and the  process is reversible on $(\mathcal{X},\mu^*)$. The eigenvalues and eigenvectors of the transition matrix can be found explicitly; see Example 12.15 in \cite{levin2017markov}.  In particular, the second largest eigenvalue is $\lambda_2=1-1/d$, hence we obtain the following Poincar{\'e} inequality:
\begin{align}
\Real(\langle (\mathcal{P}-I)g,g\rangle)\leq -\frac{1}{d}\|P^\perp g\|^2_{L^2(\mu^*)},\,\,\, g\in L^2(\mu^*).
\end{align}
Assuming $R(\widetilde\mu^*||\mu^*)<\infty$, we then obtain the UQ bound \req{discrete_poincare_UQ} with $\alpha=d$.

\subsection{Example: Exclusion Chain}
Derivation of functional inequalities for many discrete-time Markov processes can be found in \cite{diaconis1996}.  Here we investigate the resulting UQ bounds for one of these examples; see Section 4.6 in  the above reference and also \cite{Diaconis1993} for further details and proofs regarding this example.

Let $(V,E)$ be a  symmetric, connected graph with $n$ vertices.  Let $d(x)$ be the degree of a vertex $x\in V$ and $d_0=\max_x d(x)$.  Fix $r\leq n$.  The $r$-exclusion process is a Markov chain with state space being the set of cardinality $r$ subsets of $V$.  Informally stated, the transition probabilities are defined as follows:  Given an $r$-subset $A$ (i.e., state of the chain), pick an element $x\in A$ with probability proportional to its degree.  Uniformly randomly pick a vertex $y$ out of all those connected with $x$.  If $y$ is not in $A$ then transition to the new state  $(A\setminus\{x\})\cup\{y\}$.  Otherwise, the chain remains at the set $A$. 

For each $(x,y)\in V\times V$, fix a path $\gamma_{x,y}$ from $x$ to $y$ in the graph and let $|\gamma_{x,y}|$ be its length. Define
\begin{align}
\Delta_0=&\max_{e_0\in V}\left\{\sum_{(x,y):e_0\in\gamma_{x,y}}|\gamma_{x,y}|\right\},\,\,\,\,
d_r=\max_{A\subset V:|A|=r}\left\{\frac{1}{r}\sum_{a\in A}d(a)\right\}.
\end{align}
The generator of this  Markov chain satisfies both a Poincar{\'e} inequality and a $\log$-Sobolev inequality with respective constants being
\begin{align}\label{exclusion_const}
\alpha=rd_r\Delta_0/n,\,\,\,\,\,\beta=3rd_r\Delta_0\log(n)/n.
\end{align}

Then, assuming $R(\widetilde\mu^*||\mu^*)<\infty$, the above Poincar{\'e} inequality implies the UQ bound \req{discrete_poincare_UQ} with $\alpha$ as in \req{exclusion_const}, and the $\log$-Sobolev inequality results in
\begin{align}
\pm\left(\widetilde{\mu}^*[f]-\mu^*[f]\right)\leq \inf_{c>0}\left\{\frac{1}{c\beta}\log\left(\int \exp\left(\pm\beta c(f-\mu^*[f])\right) d\mu^*\right)+\frac{\eta}{c}\right\},
\end{align}
with $\beta$ and $\eta$ as in \req{exclusion_const} and \req{discrete_poincare_UQ} respectively.

\section{Bounding the Relative Entropy Rate}\label{sec:rel_ent}
For any $\eta>0$, the results derived in the previous sections provide  UQ bounds over the class of all alternative models that satisfy a relative entropy bound of the form 
\begin{align}
H_T(\widetilde P^{\widetilde \mu}||P^{\mu^*})\equiv\frac{1}{T}R(\widetilde P^{\widetilde \mu}_T||P^{\mu^*}_T)\leq \eta.
\end{align}
  In this section, we study in more detail the dependence of $H_T$ on $T$ and on the models $\widetilde{P}^{\widetilde \mu}$ and $P^{\mu^*}$.  Specifically, we derive upper bounds on $H_T$ in various settings that can be substituted for $H_T$ in the general UQ bound \req{standard_goal_oriented_bound}. Here, it will make little difference whether the initial distribution for the $P$-process is invariant or not, so we no longer make that assumption when deriving the relative entropy bounds;  $\mu$ will denote an arbitrary initial distribution.

Deriving bounds on the relative entropy is a very application-specific problem.  We will cover several examples in detail: continuous-time Markov chains, semi-Markov processes, change of drift in SDEs, and numerical methods for SDEs with additive noise.

\subsection{Example: Continuous-Time Markov Chains}\label{Example:CTMC}

Let $\mathcal{X}$ be a countable set, $P^{\mu}$, $\widetilde P^{\widetilde \mu}$ be probability measures on $(\Omega,\mathcal{F})$ and $X_t:\Omega\to\mathcal{X}$ such that  $P^{\mu}$ (resp. $\widetilde P^{\widetilde{\mu}}$) makes $(\Omega,\mathcal{F},X_t)$ a continuous-time Markov chain (CTMC) with transition probabilities $a(x,y)$ (resp. $\widetilde a(x,y)$), jump rates $\lambda(x)$ (resp. $\widetilde\lambda(x)$), and initial distribution $\mu$ (resp. $\widetilde\mu$).  Let $\mathcal{F}_t$ be the natural filtration for $X_t$ and $X^J_n$ be the embedded jump chain with jump times $J_n$.

Suppose $\widetilde{\mu}\ll \mu$, $\lambda$ and $\widetilde{\lambda}$ are positive and bounded above, and for all $x,y\in \mathcal{X}$ we have $a(x,y)=0$ iff $\widetilde{a}(x,y)=0$.  Then for any $T>0$ we have $\widetilde P^{\widetilde{\mu}}|_{\mathcal{F}_T}\ll P^\mu|_{\mathcal{F}_T}$ and
\begin{align}\label{eq:rel_ent_CTMC}
&R(\widetilde P^{\widetilde{\mu}} |_{\mathcal{F}_T}||P^\mu |_{\mathcal{F}_T})\\
=&R(\widetilde\mu||\mu)+\widetilde{E}^{\widetilde\mu}\left[\int_0^T\widetilde{F}(X_s)\widetilde\lambda (X_s)ds\right] -\widetilde{E}^{\widetilde{\mu}}\left[\int_0^T \widetilde\lambda (X_s)-\lambda (X_s)ds \right],\notag\\
&\widetilde F(x)\equiv\sum_{z\in\mathcal{X}}\widetilde a (x,z)\log\left(\frac{\widetilde\lambda (x)\widetilde a (x,z)}{\lambda (x)a (x,z)}\right).\notag
\end{align}

To simplify further, if $\widetilde\mu=\widetilde\mu^*$ is an invariant measure then
\begin{align}
&R(\widetilde P^{\widetilde{\mu}^*} |_{\mathcal{F}_T}||P^{\mu} |_{\mathcal{F}_T})=R(\widetilde\mu^*||\mu)\\
&+T\left(\sum_{x\in\mathcal{X}}\sum_{z\in\mathcal{X}}\widetilde\mu^*(x)\widetilde\lambda (x)\widetilde a (x,z)\log\left(\frac{\widetilde\lambda (x)\widetilde a (x,z)}{\lambda (x)a (x,z)}\right)-\sum_{x\in\mathcal{X}}\widetilde\mu^*(x)\left(\widetilde\lambda (x)-\lambda (x)\right) \right).\notag
\end{align}
See the supplementary materials to \cite{DKPP} and Proposition 2.6 in Appendix 1 of \cite{kipnis2013scaling} for  details regarding these results.

\subsection{Example: Semi-Markov Processes}\label{Example:semi-Markov}
As we have noted previously, our results require $(X_t,P^x)$ to be Markov, but do {\em not} require the alternative model $(X_t,\widetilde{P}^x)$ to be Markov.  Here we discuss  one such class of examples, that of a semi-Markov perturbation of a continuous-time Markov chain.

Semi-Markov processes are continuous-time jump processes with memory (i.e., with nonexponential waiting times).  Such a process is defined by a jump chain, $X^J_n$,  jump times, $J_n$, and waiting times (i.e. jump intervals), $\Delta_{n+1}\equiv  J_{n+1}- J_n$, that satisfy
\begin{flalign}
&\widetilde{P}^{\widetilde{\mu}}( X^J_{n+1}=y, \Delta_{n+1}\leq t| X^J_1,..., X^J_{n-1}, X^J_n, J_1,..., J_n)\notag\\
&= \widetilde{P}^{\widetilde{\mu}}( X^J_{n+1}=y, \Delta_{n+1}\leq t| X^J_n)\equiv \widetilde{Q}_{X^J_n,y}(t).\notag
\end{flalign}
$\widetilde{Q}_{x,y}(t)$ is called the semi-Markov kernel; see, for example, \cite{janssen2006applied,limnios2012semi} for further details.  Note that a continuous-time Markov chain with embedded jump Markov-chain transition probabilities $a(x,y)$ and jump rates $\lambda(x)$ is  described by the semi-Markov kernel  
  \begin{align}\label{eq:base_semiMKer}
  Q_{x,y}(t)=a(x,y)\int_0^t \lambda(x)e^{-\lambda(x)s}ds.
  \end{align}  

A semi-Markov perturbation of \req{eq:base_semiMKer} with the same embedded jump Markov-chain but with modified (nonexponential) waiting times is described by a kernel of the form
 \begin{align}\label{eq:semiMarkov_wait_perturb}
 \widetilde{Q}_{x,y}(t)=a(x,y)\widetilde{H}_x(t).
 \end{align}
 \begin{remark}
Phase-type distributions constitute a useful semiparametric description of such alternative waiting-time distributions, going beyond the exponential case to describe systems with memory; see  \cite{doi:10.1002/asm.3150100403,bladt2017matrix} for details.
\end{remark} 
 
 The relative entropy rate,
 \begin{align}
\eta\equiv \limsup_{T\to\infty} \frac{1}{T} R( \widetilde{P}^{\widetilde{\mu}}|_{\mathcal{F}_T}||P^{\widetilde{\mu^*}}|_{\mathcal{F}_T}),
\end{align}
between semi-Markov processes was obtained in \cite{10.2307/3216060} under the appropriate ergodicity assumptions.  When the base process has the form \req{eq:base_semiMKer} and the alternative process has the form \req{eq:semiMarkov_wait_perturb}, the relative entropy rate can be expressed in terms of the relative entropy of the waiting-time distributions:
 \begin{align}\label{eq:semimarkov_rel_ent}
 \eta=&\frac{1}{\widetilde{m}_{ \pi}}\sum_x\pi(x)R(\widetilde{H}_x||H_x),\,\, \,\,\widetilde{m}_{{\pi}}\equiv\sum_{x}{\pi}(x)\int_0^\infty (1-\widetilde{H}_x(t))dt,
 \end{align}
 where $\pi$ is the invariant distribution for the Markov chain $a(x,y)$.
 \begin{remark}
The quantity $\widetilde{m}_\pi$ is the mean sojourn time under the invariant distribution, $\pi$, and $\sum_x\pi(x)R(\widetilde{H}_x||H_x)$ can be thought of as the mean relative entropy of a single jump (comparing the alternative and base model waiting-time distributions). The formula for $\eta$, \req{eq:semimarkov_rel_ent}, therefore has the  intuitive meaning of an information loss per unit time.
\end{remark}

\subsubsection{Semi-Markov Perturbations of a $M/M/\infty$-Queue}

As a concrete example, we consider semi-Markov perturbations of an $M/M/\infty$-queue with service rate $\rho$ and with an arrival rate $\lambda$.  The  embedded jump Markov-chain  is given by
\begin{align}\label{eq:jump_chain}
a(x,x+1)=\lambda/(\lambda+\rho x),\,\,\,\,a(x,x-1)=\rho x/(\lambda+\rho x)
\end{align}
and the waiting-times are exponentially distributed with jump rates 
 \begin{align}\label{eq:lambda_def}
 \lambda(x)=\alpha+\rho x.
 \end{align} 
 \req{eq:jump_chain}, has invariant distribution
  \begin{align}
  \pi(x)= \frac{(\alpha+\rho x)(\alpha/\rho)^x }{2\alpha x!} e^{-\alpha/\rho}.
\end{align}

 Taking $T\to \infty$ in \req{goal_oriented_bound3} and using \req{eq:queue_Lambda_def}, \req{eq:queue_Lambda}, and \req{eq:semimarkov_rel_ent} we therefore obtain the following asymptotic upper bound on the average queue length in the alternative model:
\begin{align}\label{eq:time_avg_UQ_limit}
&\limsup_{T\to\infty}\left( \widetilde E^{\widetilde{\mu}}\left[\frac{1}{T}\int_0^T X_t dt\right]-\alpha/\rho\right)\\
\leq& \inf_{0<c<\rho}\left\{\frac{1}{c}\frac{\alpha c^2}{\rho^2 (1-c/\rho)}+\frac{1}{c}\eta\right\}=\left(2\sqrt{ \eta/\alpha}+\eta/\alpha\right)\frac{\alpha}{\rho},\notag
\end{align}
where
 \begin{align}
 \eta=& \frac{1}{\widetilde{m}_{ \pi}}\sum_x\pi(x)R(\widetilde{H}_x||H_x),\\
 \widetilde{m}_{{\pi}}=&\sum_{x}{\pi}(x)\int_0^\infty (1-\widetilde{H}_x(t))dt, \,\,\,H_x(t)=\int_0^t \lambda(x)e^{-\lambda(x)s}ds.\notag
 \end{align}
 Note that the only ingredient from the alternative model that is needed in \req{eq:time_avg_UQ_limit} is   $\widetilde{H}_x$, and given this, the bounds are generally straightforward to evaluate.

\subsection{Example: Change of Drift for SDEs}
Next, consider the case where $P^x$ and $\widetilde P^x$ are the distributions on  $C([0,\infty),\mathbb{R}^n)$ of the solution flows $X_t^x$ and $\widetilde X_t^x$ of a pair of SDEs. More precisely:
\begin{assumption}\label{assump:SDE}
Assume:
\begin{enumerate}
\item $X^x_t$ and $\widetilde X^x_t$ are weak solutions to the $\mathbb{R}^n$-valued SDEs, on filtered probability spaces satisfying the usual conditions \cite{karatzas2014brownian}:
\begin{align}
&dX^x_t=b(X_t^x)dt+\sigma(X_t^x)dW_t,\,\, X^x_0=x,\\
& d\widetilde X^x_t=\widetilde b(\widetilde X_t^x)dt+\sigma(\widetilde X_t^x)d\widetilde W_t,\,\, \widetilde X^x_0=x,\label{tilde_SDE}
\end{align}
 where $W_t$ and $\widetilde W_t$ are  $m$-dimensional Wiener processes.  We let $P$ and $\widetilde P$ denote the probability measures of the respective spaces where the SDEs are defined.
 
  Here we think of $b:\mathbb{R}^n\to\mathbb{R}^n$ and  $\sigma:\mathbb{R}^n\to\mathbb{R}^{n\times m}$ as the measurable drift and diffusion for the base process, and we assume the modified drift has the form $\widetilde b=b+\sigma\beta$ for some measurable $\beta:\mathbb{R}^n\to\mathbb{R}^m$.

\item $X^x_t$ and $\widetilde X^x_t$ are jointly continuous in $(t,x)$.
 \item $X^x_t$ satisfies the following flow property:\\
For any bounded, measurable $G:C([0,\infty),\mathbb{R}^n)\to\mathbb{R}$, we have
\begin{align}\label{Markov_flow_property}
E_P(G(X^x_{t+\cdot})|\mathcal{F}_t)=E_P\left[G\left(X^{(\cdot)}\right)\right]\circ X^x_t.
\end{align}
\item $X^x_t$  and $\beta$ satisfy the Novikov condition
\begin{align}
E_P\left[\exp\left(\frac{1}{2}\int_0^T \|\beta(X_s^x)\|^2ds\right)\right]<\infty
\end{align}
for all $x\in\mathbb{R}^n$, $T>0$.
\item For every $T>0$, solutions to \req{tilde_SDE} satisfy uniqueness in law, up to time $T$.
\end{enumerate}

\end{assumption}
Given this, we define $P^x=(X^x)_* P$ and $\widetilde P^x=(\widetilde X^x)_*\widetilde P$ i.e. the distributions on path space, with the Borel sigma algebra:
\begin{align}
(\Omega,\mathcal{F},\mathcal{F}_t)=(C([0,\infty),\mathbb{R}^n),\mathcal{B}(C([0,\infty),\mathbb{R}^n)),\sigma(\pi_s,s\leq t)),
\end{align}
where $\pi_t$ is evaluation at time $t$.  Finally, define $X_t\equiv \pi_t$.  One can easily show that the above properties are sufficient to guarantee that Assumption \ref{general_process_assump} holds.

\begin{remark}\label{drift_diffusion_remark}
The existence of flows of solutions $X^x_t$ and $\widetilde X^x_t$ that satisfy the above conditions is guaranteed, for example, if $b$ and $\sigma$  satisfy a linear growth bound
\begin{equation}
\|b(x)\|^2+\|\sigma(x)\|^2\leq K^2(1+\|x\|^2),
\end{equation}
and the following local Lipschitz bound:\\
For each $\ell$ there exists $K_\ell$ such that 
\begin{align}
\|b(x)-b(y)\|+\|\sigma(x)-\sigma(y)\|\leq K_\ell\|x-y\|
\end{align}
on $\|x\|,\|y\|\leq \ell$, and if $\beta:\mathbb{R}^n\to\mathbb{R}^m$ is also bounded and locally Lipschitz.
\end{remark}

Fixing $T>0$, Girsanov's theorem allows one to bound the relative entropy, $R(\widetilde P^x_T||P^x_T)$, that appears in the UQ bound \req{standard_goal_oriented_bound}. See the supplementary materials to \cite{DKPP} for more details:
\begin{lemma}\label{lemma:SDE_rel_ent}
Under Assumption \ref{assump:SDE}, and given initial distributions $\mu$ and $\widetilde{\mu}$ for the base and alternative models respectively, we have
\begin{align}
H_T(\widetilde P^{\widetilde\mu}||P^\mu)\leq& \frac{1}{T} R(\widetilde\mu||\mu)+\int\left( \frac{1}{2T}\int_0^T E_{\widetilde P}\left[  \|\beta(\widetilde X^x_s)\|^2 \right]ds\right)\widetilde\mu(dx).\notag
\end{align}
\end{lemma}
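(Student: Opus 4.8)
The plan is to reduce the relative entropy of the path measures to a relative entropy of Wiener measures via Girsanov's theorem, and then integrate out the initial distribution. First I would fix $x \in \mathbb{R}^n$ and $T > 0$, and work on the common probability space where the base SDE is defined. Since $\widetilde b = b + \sigma\beta$, Girsanov's theorem (applicable because of the Novikov condition in Assumption~\ref{assump:SDE}(4)) produces an equivalent measure $Q$ on $\mathcal{F}_T$, under which the process $X^x_t$ has the same law as the modified process $\widetilde X^x_t$ up to time $T$; the Radon--Nikodym derivative is the exponential martingale
\begin{align}
\frac{dQ}{dP}\bigg|_{\mathcal{F}_T} = \exp\left( \int_0^T \beta(X^x_s)\cdot dW_s - \frac{1}{2}\int_0^T \|\beta(X^x_s)\|^2\,ds \right).
\end{align}
By uniqueness in law for \req{tilde_SDE} (Assumption~\ref{assump:SDE}(5)), pushing forward $Q$ by the path map gives exactly $\widetilde P^x_T$, while pushing forward $P$ gives $P^x_T$.

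Next I would compute $R(\widetilde P^x_T \| P^x_T)$ by transporting it back: using the pushforward identities and the chain rule for Radon--Nikodym derivatives, $R(\widetilde P^x_T\|P^x_T) = E_Q[\log(dQ/dP)|_{\mathcal{F}_T}]$. Expanding the logarithm of the exponential martingale, the stochastic-integral term $\int_0^T \beta(X^x_s)\cdot dW_s$ has vanishing expectation under $Q$ after one notes that $W_s - \int_0^s \beta(X^x_u)\,du$ is a $Q$-Brownian motion (this is precisely the content of Girsanov), so the cross term contributes nothing and one is left with $\frac12 E_Q[\int_0^T \|\beta(X^x_s)\|^2\,ds]$. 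Translating back to the $\widetilde P$ side, $E_Q[\,\cdot\,] = E_{\widetilde P}[\,\cdot\,]$ on the relevant functionals, yielding $R(\widetilde P^x_T\|P^x_T) = \frac12 \int_0^T E_{\widetilde P}[\|\beta(\widetilde X^x_s)\|^2]\,ds$.

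Finally, I would assemble the claim for general initial distributions. Using the chain rule for relative entropy (the decomposition of $R(\widetilde P^{\widetilde\mu}_T\|P^\mu_T)$ into the entropy $R(\widetilde\mu\|\mu)$ of the initial marginals plus the $\widetilde\mu$-average of the entropies $R(\widetilde P^x_T\|P^x_T)$ of the conditional path laws), dividing by $T$, and substituting the per-$x$ identity from the previous step gives
\begin{align}
H_T(\widetilde P^{\widetilde\mu}\|P^\mu) \leq \frac{1}{T}R(\widetilde\mu\|\mu) + \int\left(\frac{1}{2T}\int_0^T E_{\widetilde P}[\|\beta(\widetilde X^x_s)\|^2]\,ds\right)\widetilde\mu(dx),
\end{align}
as required (it is an inequality rather than an equality only because the disintegration bound may be lossy if the joint law is not a clean product, but the Markov-flow property \req{Markov_flow_property} makes it an equality in this setting). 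The main obstacle I anticipate is the careful justification of the chain rule for relative entropy on path space together with the measurability of $x \mapsto R(\widetilde P^x_T\|P^x_T)$ and the interchange of expectation with the time integral; the joint continuity in $(t,x)$ and the universal measurability assumed in Assumption~\ref{general_process_assump}, plus Tonelli's theorem for the nonnegative integrand $\|\beta\|^2$, are exactly what is needed to push this through.
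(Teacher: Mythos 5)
Your approach via Girsanov's theorem plus the chain rule for relative entropy is correct, and it matches the paper's (which, however, is not reproduced in the text: the paper merely cites the supplementary materials of \cite{DKPP} for the details, so the Girsanov-plus-chain-rule route you describe is exactly the expected one). Two small corrections to the bookkeeping. First, your intermediate step $R(\widetilde P^x_T\|P^x_T)=\tfrac12\int_0^T E_{\widetilde P}[\|\beta(\widetilde X^x_s)\|^2]\,ds$ should in general be an inequality $\leq$: the path map $\omega\mapsto X^x(\omega)|_{[0,T]}$ can only decrease relative entropy, and equality requires the Girsanov density to be $\sigma(X^x|_{[0,T]})$-measurable (which holds, e.g., if $\sigma$ admits a measurable left inverse, but not in general). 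This, not the disintegration step, is where the inequality in the lemma actually comes from; the chain rule for relative entropy over the initial-condition marginal is an exact equality once one has regular conditional path laws, which the Markov family structure of Assumption~\ref{general_process_assump} provides. Second, your line that "the cross term contributes nothing" is correct but deserves one qualifier: the stochastic integral $\int_0^T\beta(X^x_s)\cdot dW_s$ has zero $Q$-expectation only after writing it in terms of the $Q$-Brownian motion $\widehat W_s = W_s - \int_0^s\beta(X^x_u)\,du$, which introduces an extra $\int_0^T\|\beta(X^x_s)\|^2 ds$ term that combines with the $-\tfrac12\int\|\beta\|^2$ from the exponent to produce the $+\tfrac12\int\|\beta\|^2$ you report; spelling this out would make the cancellation transparent. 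Neither issue affects the validity of the final bound.
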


\subsection{Example: Euler-Maruyama Methods for SDEs with Additive Noise}
As the final example, we consider SDEs with additive noise, approximated by a (generalized) Euler-Maruyama (EM) method.
\begin{assumption}\label{assump:SDE2}
Let $W_t$ be an $n$-dimensional Wiener process on filtered probability spaces satisfying the usual conditions,  $b:\mathbb{R}^n\to\mathbb{R}^n$ satisfy the linear boundedness and local Lipschitz properties as described in Remark \ref{drift_diffusion_remark}, and $X^x_t$ be the strong solutions  to the SDEs
\begin{align}\label{eq:SDE_base}
&dX^x_t=b(X_t^x)dt+dW_t,\,\, X^x_0=x.
\end{align}
Recall that versions can be chosen so that $X^x_t$ is jointly continuous in $(t,x)$ and $X^x_t$ satisfies the flow property \req{Markov_flow_property}.

We fix $\Delta t>0$ and assume we are given a measurable vector field $\widetilde b_{\Delta t}:\mathbb{R}^n\to\mathbb{R}^n$ (the drift for the generalized EM method).  We define the approximating process $\widetilde X^x_0=x$,
\begin{align}
\widetilde X^x|_{(j\Delta t,(j+1)\Delta t]}(t)=\widetilde X^x_{j\Delta t}+\widetilde b_{\Delta t}(\widetilde X^x_{j\Delta t})(t-j\Delta t)+W_t-W_{j\Delta t}\text{ for $j\in\mathbb{Z}_0$.  
}
\end{align}
\end{assumption}
We emphasize that, for the purposes of employing the theory we have developed (i.e., to employ functional inequalities satisfied by the generator of \req{eq:SDE_base}), it is necessary to  extend $\widetilde X^x_t$ to all $t\geq 0$, and not just define it at the mesh points $j\Delta t$.  

Let $P$ denote the probability measure  on the space where the SDE is defined. Similarly to the previous example, we define $P^x=(X^x)_* P$ and $\widetilde P^x=(\widetilde X^x)_*P$, probability measures on
\begin{align}
(\Omega,\mathcal{F},\mathcal{F}_t)=(C([0,\infty),\mathbb{R}^n),\mathcal{B}(C([0,\infty),\mathbb{R}^n)),\sigma(\pi_s,s\leq t)).
\end{align}
  Assumption \ref{assump:SDE2} is sufficient to guarantee that Assumption \ref{general_process_assump} holds. The chain rule for relative entropy (see Theorem C.3.1 in \cite{dupuis2011weak}) can  be used to obtain
\begin{align}
R(\widetilde P^{\widetilde\mu}_T||P^\mu_T)\leq R(\widetilde\mu||\mu)+\int R(\widetilde P^x_T||P^x_T)\widetilde\mu(dx).
\end{align}

Let $T=N\Delta t$ for $N\in\mathbb{Z}^+$. For the purposes of bounding the relative entropy term
\begin{align}
R(\widetilde P^x_{T}||P^x_T)=R((\widetilde X^x|_{[0,N\Delta t]})_* P||(X^x|_{[0,N\Delta t]})_*P),
\end{align}
it will be useful to define the Polish space $\mathcal{Y}\equiv C([0,\Delta t],\mathbb{R}^n)$ and  the following one-step transition probabilities for a discrete-time Markov process on $\mathcal{Y}$:
\begin{align}
q(y,B)=P\left(X^{y(\Delta t)}|_{[0,\Delta t]}\in B\right),\,\,\widetilde q(y,B)=P\left(\widetilde X^{y(\Delta t)}|_{[0,\Delta t]}\in B\right).
\end{align}
Letting $\otimes_1^N q$ denote the composition on $\mathcal{Y}^N$, the Markov property implies
\begin{align}
\otimes_1^N q(x,\cdot)=\left( X^x|_{[0,\Delta t]},X^x|_{\Delta t+[0,\Delta t]},...,X^x|_{(N-1)\Delta t+[0,\Delta t]}\right)_*P
\end{align}
for all $x\in\mathbb{R}^n$, and similarly for $\widetilde q$, $\widetilde X^x$.

Therefore, using the  chain rule for relative entropy again, we obtain
\begin{align}
&R(\widetilde P^x_{N\Delta t}||P^x_{N\Delta t})= \sum_{j=0}^{N-1}\int R\left(\widetilde q(y,\cdot)||q(y,\cdot)\right) \widetilde q^{j}(x,dy).
\end{align}
for all $x\in\mathbb{R}^n$. Hence we arrive at:
\begin{lemma}\label{lemma:EM_rel_ent_expansion}
\begin{align}
&R(\widetilde P^x_{N\Delta t}||P^x_{N\Delta t})=\sum_{j=1}^N E_P\left[  R\left( \widetilde P^{(\cdot)}_{\Delta t}||P^{(\cdot)}_{\Delta t}\right)\circ \widetilde X^x_{(j-1)\Delta t}\right].
\end{align}
\end{lemma}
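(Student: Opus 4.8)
The plan is to take the chain-rule decomposition displayed immediately above the statement,
\begin{align}
R(\widetilde P^x_{N\Delta t}||P^x_{N\Delta t})= \sum_{j=0}^{N-1}\int R\left(\widetilde q(y,\cdot)||q(y,\cdot)\right) \widetilde q^{j}(x,dy),\notag
\end{align}
and simply rewrite its terms in the notation of the process relative entropies $R(\widetilde P^{(\cdot)}_{\Delta t}||P^{(\cdot)}_{\Delta t})$. The first step is to recognize the integrand: by the definitions $q(y,B)=P(X^{y(\Delta t)}|_{[0,\Delta t]}\in B)$, $\widetilde q(y,B)=P(\widetilde X^{y(\Delta t)}|_{[0,\Delta t]}\in B)$ together with $P^x=(X^x)_*P$, $\widetilde P^x=(\widetilde X^x)_*P$ and $P^x_{\Delta t}=P^x|_{\mathcal{F}_{\Delta t}}$, the measures $q(y,\cdot)$ and $\widetilde q(y,\cdot)$ are, under the canonical bimeasurable identification of $C([0,\Delta t],\mathbb{R}^n)$ with $(\Omega,\mathcal{F}_{\Delta t})$, exactly $P^{y(\Delta t)}_{\Delta t}$ and $\widetilde P^{y(\Delta t)}_{\Delta t}$. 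In particular they depend on $y$ only through the right endpoint $y(\Delta t)$, hence so does their relative entropy, and
\begin{align}
R\left(\widetilde q(y,\cdot)||q(y,\cdot)\right)=R\left(\widetilde P^{y(\Delta t)}_{\Delta t}||P^{y(\Delta t)}_{\Delta t}\right)=\left[R\left(\widetilde P^{(\cdot)}_{\Delta t}||P^{(\cdot)}_{\Delta t}\right)\right](y(\Delta t)).\notag
\end{align}

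Next I would identify the mixing measures $\widetilde q^{j}(x,\cdot)$. Using the flow property \req{Markov_flow_property} for $\widetilde X^x$ — the same property that yields $\otimes_1^N \widetilde q(x,\cdot)=\big(\widetilde X^x|_{[0,\Delta t]},\dots,\widetilde X^x|_{(N-1)\Delta t+[0,\Delta t]}\big)_*P$ — the $j$-fold iterate is the law of the $j$-th path segment, $\widetilde q^{j}(x,\cdot)=\big(\widetilde X^x|_{(j-1)\Delta t+[0,\Delta t]}\big)_*P$ for $j\ge 1$, while the $j=0$ term is the marginal relative entropy of the first segment, $R(\widetilde P^x_{\Delta t}||P^x_{\Delta t})$. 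Since the value at time $\Delta t$ of the segment $\widetilde X^x|_{(j-1)\Delta t+[0,\Delta t]}$ is $\widetilde X^x_{j\Delta t}$, pushing $\widetilde q^{j}(x,\cdot)$ forward by evaluation at $\Delta t$ and combining with the displayed identity for the integrand gives
\begin{align}
\int R\left(\widetilde q(y,\cdot)||q(y,\cdot)\right) \widetilde q^{j}(x,dy)=E_P\left[R\left(\widetilde P^{(\cdot)}_{\Delta t}||P^{(\cdot)}_{\Delta t}\right)\circ \widetilde X^x_{j\Delta t}\right]\notag
\end{align}
for every $j=0,\dots,N-1$, the $j=0$ case being the deterministic value at $\widetilde X^x_0=x$. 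Substituting into the chain-rule identity and relabeling the summation index produces exactly the asserted formula; since every term is non-negative the sums are unambiguously defined in $[0,\infty]$, so no integrability issue arises.

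The computation is essentially bookkeeping, so I do not expect a genuine obstacle. The one place that requires a little care is the measure-theoretic part: verifying that $y\mapsto R(\widetilde q(y,\cdot)||q(y,\cdot))$ factors through the endpoint map $y\mapsto y(\Delta t)$ — which is what collapses the mixing over whole path segments into an expectation over the single random point $\widetilde X^x_{j\Delta t}$ — and identifying $\widetilde q^{j}(x,\cdot)$ with a segment law via the flow property, rather than re-deriving a Markov/chain-rule argument from scratch, the latter having already been carried out to obtain the displayed decomposition.
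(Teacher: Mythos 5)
Your argument is correct and is exactly the intended one: the lemma is a straightforward rewriting of the displayed chain-rule identity, obtained by (i) observing that the kernels $q(y,\cdot),\widetilde q(y,\cdot)$ depend on $y$ only through $y(\Delta t)$, so that $R(\widetilde q(y,\cdot)\|q(y,\cdot))=R(\widetilde P^{y(\Delta t)}_{\Delta t}\|P^{y(\Delta t)}_{\Delta t})$, (ii) identifying $\widetilde q^j(x,\cdot)$ as the law of the $j$-th segment so that its endpoint pushforward is $(\widetilde X^x_{j\Delta t})_*P$, and (iii) re-indexing $j\mapsto j-1$. The paper gives no separate proof precisely because this bookkeeping is considered immediate once the displayed identity and the definitions of $q,\widetilde q$ are in hand.
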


The one-step relative entropy can be bounded via Girsanov's theorem, similarly to Lemma \ref{lemma:SDE_rel_ent}; on each  time interval of length $\Delta t$, the tilde process is simply the solution to an SDE with constant drift and additive noise.

\begin{lemma}\label{lemma:EM_rel_ent_bound}
Under Assumption \ref{assump:SDE2}
\begin{align}
&H_{N\Delta t}(\widetilde P^{\widetilde{\mu}}||P^\mu)\leq \frac{1}{N\Delta t}R(\widetilde\mu||\mu)\\
&+\frac{1}{N}\sum_{j=1}^N \int\int  E_{ P}\left[\frac{1}{2\Delta t}\int_0^{\Delta t}  \|\widetilde b_{\Delta t}(y)-b(\widetilde X^x_s)\|^2ds \right] \widetilde p^{\Delta t}_{j-1}(x,dy)\widetilde\mu(dx),\notag
\end{align}
where $\widetilde p^{\Delta t}_j(x,dy)=(\widetilde X^x_{j\Delta t})_*P$. 
\end{lemma}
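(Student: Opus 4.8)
The plan is to chain together three facts already in place: the chain-rule bound $R(\widetilde P^{\widetilde\mu}_T||P^\mu_T)\le R(\widetilde\mu||\mu)+\int R(\widetilde P^x_T||P^x_T)\,\widetilde\mu(dx)$ recorded just before the statement, the one-step expansion of Lemma \ref{lemma:EM_rel_ent_expansion}, and a Girsanov bound on the one-step relative entropy. First I would fix $x\in\mathbb{R}^n$, write $\Phi(y)\equiv R(\widetilde P^y_{\Delta t}||P^y_{\Delta t})$ for the one-step relative entropy viewed as a function of the starting point $y$, and restate Lemma \ref{lemma:EM_rel_ent_expansion} as
\begin{align}
R(\widetilde P^x_{N\Delta t}||P^x_{N\Delta t})=\sum_{j=1}^N E_P\!\left[\Phi\circ\widetilde X^x_{(j-1)\Delta t}\right]=\sum_{j=1}^N\int\Phi(y)\,\widetilde p^{\Delta t}_{j-1}(x,dy),
\end{align}
the final equality being merely the definition $\widetilde p^{\Delta t}_{j-1}(x,dy)=(\widetilde X^x_{(j-1)\Delta t})_*P$.

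The crux is bounding $\Phi(y)$. On $[0,\Delta t]$ the generalized Euler--Maruyama process started at $y$ is exactly the solution of the SDE with \emph{constant} drift $\widetilde b_{\Delta t}(y)$ and unit diffusion, $d\widetilde X^y_s=\widetilde b_{\Delta t}(y)\,ds+dW_s$, whereas $X^y$ solves $dX^y_s=b(X^y_s)\,ds+dW_s$; thus $\widetilde P^y_{\Delta t}$ and $P^y_{\Delta t}$ are the laws on $C([0,\Delta t],\mathbb{R}^n)$ of two diffusions that differ only through a change of drift. This is precisely the situation of Lemma \ref{lemma:SDE_rel_ent} specialized to $\sigma=I$, $T=\Delta t$, point initial distributions, and drift perturbation $\beta(\cdot)=\widetilde b_{\Delta t}(y)-b(\cdot)$, so Girsanov's theorem furnishes the Radon--Nikodym derivative of $\widetilde P^y_{\Delta t}$ with respect to $P^y_{\Delta t}$ and, after taking logarithms along $\widetilde X^y$ and averaging under $P$,
\begin{align}
\Phi(y)\le E_P\!\left[\frac{1}{2}\int_0^{\Delta t}\|\widetilde b_{\Delta t}(y)-b(\widetilde X^y_s)\|^2\,ds\right].
\end{align}

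To conclude I would insert this one-step bound into the displayed sum, divide by $T=N\Delta t$, integrate against $\widetilde\mu$, and recombine with the chain-rule term; the factorization $1/(N\Delta t)=(1/N)(1/\Delta t)$ yields the stated expression, with the variable $y$ appearing as the argument written $\widetilde X^x_s$ in the integrand. The hard part is the standard Girsanov technicality: verifying that the stochastic-integral term of the Girsanov exponent is a genuine mean-zero martingale, so that only the quadratic term contributes. Since $b$ is only assumed locally Lipschitz with linear growth (Remark \ref{drift_diffusion_remark}) and $\widetilde X^y_s=y+\widetilde b_{\Delta t}(y)s+W_s$ has Gaussian tails on the bounded interval $[0,\Delta t]$, one can check Novikov's condition for $s\mapsto\widetilde b_{\Delta t}(y)-b(\widetilde X^y_s)$ directly; alternatively, to sidestep any growth hypothesis one truncates $b$, applies the bound to the truncation, and passes to the limit via lower semicontinuity of relative entropy and monotone convergence of the quadratic cost --- which is also why the conclusion is an inequality rather than an equality. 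A secondary, routine matter is the joint measurability of $(y,s,\omega)\mapsto b(\widetilde X^y_s(\omega))$ and an application of Tonelli to interchange $E_P$, the $ds$-integral, and the integrations against $\widetilde p^{\Delta t}_{j-1}(x,\cdot)$ and $\widetilde\mu$.
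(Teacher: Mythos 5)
Your proposal is correct and follows exactly the route the paper intends: the paper's proof is only a one-sentence sketch ("the one step relative entropy can be bounded via Girsanov's theorem, similarly to Lemma~\ref{lemma:SDE_rel_ent}; on each time interval of length $\Delta t$, the tilde process is simply the solution to an SDE with constant drift and additive noise"), and you have chained the three ingredients---chain rule, Lemma~\ref{lemma:EM_rel_ent_expansion}, and the one-step Girsanov bound with drift perturbation $\beta(\cdot)=\widetilde b_{\Delta t}(y)-b(\cdot)$---in precisely the way that sketch calls for, while also correctly flagging the Novikov/truncation technicality that justifies the $\leq$. You are also right that the $\widetilde X^x_s$ appearing in the published display is a notational slip: the variable that should feed into $b$ inside the inner expectation is the integration variable $y$ coming from $\widetilde p^{\Delta t}_{j-1}(x,dy)$, i.e.\ the integrand should read $\|\widetilde b_{\Delta t}(y)-b(\widetilde X^y_s)\|^2$.
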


\subsubsection{Euler-Maruyama  Error Bounds}
We end this section by specializing the results to the Euler-Maruyama method, $\widetilde b_{\Delta t}\equiv b$.

If we assume  $b$ is $C^1$ with bounded first derivative and $Db$ is $L$-Lipschitz then Taylor expanding $b$ gives
\begin{align}
\int_0^{\Delta t} E_{ P}&\left[  \|\widetilde b_{\Delta t}(y)-b(\widetilde X^y_s)\|^2 \right]ds\leq \tr\left(Db(y)Db(y)^T\right)\frac{\Delta t^2}{2}\\
&+\|Db(y)b(y)\|^2\frac{\Delta t^3}{3}+ \frac{16\sqrt{2}\Gamma((n+3)/2)}{5\Gamma(n/2)}L\|Db\|_\infty \Delta t^{5/2}\notag\\
&+\frac{2n(n+2)L^2}{3} \Delta t^3+L\|Db\|_\infty  \|b(y)\|^3\Delta t^4+\frac{ 2L^2}{5} \|b(y)\|^4\Delta t^5,\notag
\end{align}
and therefore
\begin{align}\label{EM_H_bound2}
H_{N\Delta t}\leq& \frac{1}{N\Delta t}R(\widetilde\mu||\mu)+\frac{\Delta t}{4}\frac{1}{N}\sum_{j=1}^N \int E_P\left[   \|Db(\widetilde X^x_{(j-1)\Delta t})\|_F^2\right]\widetilde\mu(dx)\\
&+\Delta t^{3/2}\bigg(  \frac{8\sqrt{2}\Gamma((n+3)/2)}{5\Gamma(n/2)}L\|Db\|_\infty +\frac{n(n+2)L^2}{3} \Delta t^{1/2}\notag\\
&+\frac{1}{N}\sum_{j=1}^N \int E_P\bigg[\frac{\Delta t^{1/2}}{6}\|Db(\widetilde X^x_{(j-1)\Delta t})b(\widetilde X^x_{(j-1)\Delta t})\|^2\notag\\
&+\frac{L\|Db\|_\infty}{2}  \|b(\widetilde X^x_{(j-1)\Delta t})\|^3\Delta t^{3/2}+\frac{ L^2}{5} \|b(\widetilde X^x_{(j-1)\Delta t})\|^4\Delta t^{5/2} \bigg]\widetilde\mu(dx)\bigg),\notag
\end{align}
where $\|\cdot\|_F$ denotes the Frobenius matrix norm.

This is not the tightest possible bound and alternatives can be obtained by Taylor expanding further, but it gives an idea of the type of result that can be obtained under various smoothness assumptions on $b$.

If  the initial distributions have the form $d\widetilde\mu=e^{-\widetilde\phi}dx$  and $d\mu=e^{-\phi}dx$, where $\widetilde\phi$ and $\phi$ are known functions, then the relative entropy term takes the  form
\begin{align}\label{ics_rel_ent}
R(\widetilde\mu||\mu)=\int (\phi(x)-\widetilde\phi(x))   e^{-\widetilde\phi(x)}dx.
\end{align}
If one can efficiently   sample from $\widetilde\mu$ then \req{EM_H_bound2}  and \req{ics_rel_ent}  can be estimated via Monte Carlo methods, providing UQ bounds that involve a mixture of a priori and a posteriori data.

\appendix

\setcounter{theorem}{0}
    \renewcommand{\thetheorem}{\Alph{section}\arabic{theorem}}
\setcounter{proposition}{0}
    \renewcommand{\theproposition}{\Alph{section}\arabic{proposition}}

\setcounter{lemma}{0}
    \renewcommand{\thelemma}{\Alph{section}\arabic{lemma}}

\setcounter{corollary}{0}
    \renewcommand{\thecorollary}{\Alph{section}\arabic{corollary}}

\section{Proof of the Perturbation Bound}\label{app:Hilbert_space_bounds}

\begin{lemma}
Let $H$ be  a Hilbert space, $A:D(A)\subset H\to H$ be a linear operator, and $B:H\to H$ be a bounded self-adjoint operator.  Suppose there exist $D>0$ and $x_0\in H$ with $\|x_0\|=1$ such that
\begin{align}
\langle Bx_0,x_0\rangle=0 \,\,\,\text{ and }\,\,\, \Real(\langle Ax,x\rangle)\leq -D\|P^\perp x\|^2
\end{align}
for all $x\in D(A)$,  where $P^\perp$ is the orthogonal projector onto $x_0^\perp$.

Define
\begin{align}\label{B+_def}
B^+\equiv \max\left\{\sup_{\|y\|=1}\langle By,y\rangle,0\right\}.
\end{align}
Then for any  $0\leq c <D/B^+$ we have
\begin{align}\label{perturb_bound2}
\sup_{x\in D(A),\|x\|=1}\Real(\langle (A+c B)x,x\rangle)\leq\frac{c^2\|Bx_0\|^2}{D-c B^+}.
\end{align}

\end{lemma}
\begin{proof}
Let $x\in D(A)$ with $\|x\|=1$.  Define $a =\langle x_0,x\rangle$.  (Here we will use the convention of linearity in the second argument).  We have $\|P^\perp x\|^2=1-|a |^2$, and so $|a |\leq 1$ with equality if and only if $P^\perp x=0$.

We can decompose $x=a  x_0+\sqrt{1-|a |^2}v$, where either  $v=0$ and $|a |=1$ if $P^\perp x=0$ or $v=P^{\perp}x/\sqrt{1-|a |^2}$ and $\|v\|=1$ if $P^\perp x\neq 0$.  In either case, $v\perp x_0$.

With this, we have
\begin{align}
&\sup_{x\in D(A),\|x\|=1}\Real(\langle (A+c B)x,x\rangle)=\sup_{x\in D(A),\|x\|=1}\left\{ \Real(\langle Ax,x\rangle)+c \Real(\langle Bx,x\rangle)\right\}\\
\leq&  \sup_{\beta\in[0,1]}\left\{ -D(1-\beta^2)+2c \Real(\langle\sqrt{1-\beta^2}v,a  Bx_0\rangle)+c(1-\beta^2)\langle Bv,v\rangle\right\}\notag\\
\leq  & \sup_{\beta\in[0,1]}\left\{ 2c\beta\sqrt{1-\beta^2}\| Bx_0\|-\left(D-c B^+\right)(1-\beta^2)\right\},\notag
\end{align}
where $B^+$ is given by \req{B+_def}.

Restricting to $0\leq c <D/B^+$, if $\|Bx_0\|=0$ then the supremum is $0$ and we have the result.  Otherwise, the supremum is positive and  we can use $\beta\leq 1/\beta$ and then change variables in the supremum to $r=\sqrt{1-\beta^2}/\beta$, thereby obtaining
\begin{align}
&\sup_{x\in D(A),\|x\|=1}\Real(\langle (A+c B)x,x\rangle)\\
\leq  & \sup_{\beta\in(0,1]}\left\{\beta\sqrt{1-\beta^2}\left( 2c\| Bx_0\|-\left(D-c B^+\right)\sqrt{1-\beta^2}/\beta\right)\right\},\notag\\
\leq  & \sup_{r\geq0}\left\{ 2c\| Bx_0\|r-\left(D-c B^+\right)r^2\right\}=\frac{\|Bx_0\|^2c^2}{D-c B^+}.\notag
\end{align}
\qed
\end{proof}

The previous lemma is closest in spirit to the probabilistic application, as $\|Bx_0\|^2$ plays the role of the variance.  However, one can work with non-self-adjoint perturbations, if one instead uses the definition
\begin{align}
B^+\equiv\max\left\{\sup_{\|y\|=1}\Real(\langle By,y\rangle),0\right\}
\end{align} 
and makes the replacement $\|Bx_0\|\to \|(B+B^*)x_0/2\|$ in \req{perturb_bound2}. The proof is  similar.

\section{$F$-Sobolev Inequalities }\label{app:F_sobolev}
Proposition \ref{Kac_log_sobolev} can be generalized to the $F$-Sobolev case; see the proof of Theorem 2.3 in \cite{cattiaux_guillin_2008}:
\begin{proposition}\label{Kac_F_sobolev}
Let  $A$ be the generator of $\mathcal{P}_t$ and $\mu^*$ be an invariant measure.  Suppose we have a function $F:(0,\infty)\to\mathbb{R}$ satisfying the following:
\begin{enumerate}
\item $F$ is strictly increasing,
\item $F$ is concave (hence continuous),
\item $F(1)=0$,
\item $F(x)\to\infty$ as $x\to\infty$,
\item $F(xy)\leq F(x)+F(y)$ for all $x,y\geq 0$.
\end{enumerate}
(Note that this implies $F^{-1}:(F(0^+),\infty)\to (0,\infty)$ exists, is increasing, convex, and continuous.)

Assume the $F$-Sobolev inequality holds for $\mu^*$:
\begin{align}\label{F_sobolev_def}
\int g^2 F(g^2)d\mu^*\leq -\int A[g]gd\mu^*\,\,\text{ for all $g\in D(A,\mathbb{R})$ with $\|g\|_{L^2(\mu^*)}=1$.}
\end{align}

Finally, suppose that $V\in L^1(\mu^*)$ with $V>F(0^+)$ and $\int F^{-1}(V)d\mu^*<\infty$. Then  $\mathcal{P}_t^V:L^2(\mu^*)\to L^2(\mu^*)$, defined by
\begin{align}
\mathcal{P}_t^V[g](x)=E^x\left[g(X_t)\exp\left(\int_0^t V(X_s)ds\right)\right],
\end{align}
are well-defined linear operators and the operator norm satisfies the bound
\begin{align}
\|\mathcal{P}_t^V\|\leq \exp\left[ tF\left(\int F^{-1}(V)d\mu^*\right)\right].
\end{align}

\end{proposition} 
Note that if $F(0^+)=-\infty$ then certain unbounded observables are allowed, namely those that satisfy the integrability condition \req{sobolev_int_condition}.

This proposition leads to a UQ bound of the form, \req{standard_goal_oriented_bound}.  The proof is analogous to the $\log$-Sobolev case from Section \ref{sec:log_sobolev}.
\begin{theorem}
In addition to Assumption \ref{general_process_assump}, assume the $F$-Sobolev inequality, \req{F_sobolev_def}, holds for some function, $F$, having the properties listed in Proposition \ref{Kac_F_sobolev}, $f\in L^1(\mu^*,\mathbb{R})$, and there exists $c_-<0<c_+$ such that, for all $c\in(c_-,c_+)$:
\begin{align}
F(0^+)<\pm c(f-\mu^*[f]),\,\,\,\int F^{-1}\left(\pm c\left( f-\mu^*[f]\right)\right)d\mu^*<\infty.
\end{align}

Then  a UQ bound of the form \req{standard_goal_oriented_bound} holds with
\begin{align}\label{F_Sobolev_lambda}
\Lambda(c)=\left\{ \begin{array}{cl} F\left(\int F^{-1}(V_{ c})d\mu^*\right) & \textrm{ if } c\in(c_-,c_+)  \\  + \infty & \textrm{ otherwise}
\end{array} \,.
\right. 
\end{align}

In addition, if $\Var_{\mu^*}[f]>0$, $F$ and $F^{-1}$ are smooth, $F^\prime(1)>0$, $(F^{-1})^{\prime\prime}(0)>0$, and $c\to\mu^*[F^{-1}( V_{ c})]$ is smooth on a neighborhood of $0$ and  can be differentiated under the integral then  \req{asymp_bound} holds with
\begin{align}
\Lambda^{\prime\prime}(0)=F^\prime\left(1\right)(F^{-1})^{\prime\prime}(0) \Var_{\mu^*}[f],\,\,\, \eta=\frac{1}{T}R(\widetilde P^{\widetilde{\mu}}_T||P^{\mu^*}_T).
\end{align}
\end{theorem}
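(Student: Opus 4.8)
The plan is to run the $\log$-Sobolev argument of Theorem~\ref{thm:sobolev_UQ} essentially verbatim, with the semigroup bound of Theorem~\ref{Kac_log_sobolev} replaced by its $F$-Sobolev counterpart Theorem~\ref{Kac_F_sobolev}, and then to extract the linearization from Lemma~\ref{lemma:asymp_bound}. First I would fix $c\in(c_-,c_+)$ and check that $V_{\pm c}=\pm c(f-\mu^*[f])$ satisfies the hypotheses of Theorem~\ref{Kac_F_sobolev}: it lies in $L^1(\mu^*)$ since $f\in L^1(\mu^*,\mathbb{R})$, and the two assumed inequalities give $V_{\pm c}>F(0^+)$ and $\int F^{-1}(V_{\pm c})\,d\mu^*<\infty$. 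Theorem~\ref{Kac_F_sobolev} then yields
\begin{align}
\|\mathcal{P}_T^{V_{\pm c}}\|\leq \exp\!\left[T\,F\!\left(\int F^{-1}(V_{\pm c})\,d\mu^*\right)\right].
\end{align}
Transferring this to the cumulant generating function exactly as in \req{Lambda_bound} — using that $\mu^*$ is a probability measure, so $\|\cdot\|_{L^1(\mu^*)}\leq\|\cdot\|_{L^2(\mu^*)}$ and $\|1\|_{L^2(\mu^*)}=1$ — gives
\begin{align}
\Lambda_{P_T^{\mu^*}}^{\widehat{f}_T}(\pm c)=\log\!\left(\int \mathcal{P}_T^{V_{\pm c}}[1]\,d\mu^*\right)\leq \log\|\mathcal{P}_T^{V_{\pm c}}\|\leq T\,\Lambda(\pm c),
\end{align}
with $\Lambda$ as in \req{F_Sobolev_lambda}. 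Finiteness of $\Lambda_{P_T^{\mu^*}}^{\widehat{f}_T}(\pm c)$ for small $c>0$ also shows $f_T\in\mathcal{E}(P_T^{\mu^*})$, so the Gibbs information inequality \req{goal_oriented_bound3} applies to $f_T$, $\widetilde P^{\widetilde\mu}_T$, $P^{\mu^*}_T$; dividing by $T$ produces \req{standard_goal_oriented_bound} with the stated $\Lambda$ and $\eta=\frac1T R(\widetilde P^{\widetilde\mu}_T||P^{\mu^*}_T)$. I would also record, via Jensen applied to the convex $F^{-1}$ and monotonicity of $F$, that $\Lambda\geq 0$, so $\Lambda$ is an admissible input to $\Xi^\pm$ and to Lemma~\ref{lemma:asymp_bound}.

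For the asymptotic assertion I would apply Lemma~\ref{lemma:asymp_bound} to $\Lambda(c)=F(g(c))$ on a neighborhood of $0$, where $g(c)\equiv\mu^*[F^{-1}(V_c)]=\int F^{-1}\!\big(c(f-\mu^*[f])\big)\,d\mu^*$. Under the stated smoothness and differentiation-under-the-integral hypotheses, and using $F(1)=0$ (hence $F^{-1}(0)=1$) together with $\int(f-\mu^*[f])\,d\mu^*=0$, one obtains
\begin{align}
&g(0)=1,\qquad g'(0)=(F^{-1})'(0)\int(f-\mu^*[f])\,d\mu^*=0,\\
&g''(0)=(F^{-1})''(0)\,\Var_{\mu^*}[f].
\end{align}
The chain rule then gives $\Lambda(0)=F(1)=0$, $\Lambda'(0)=F'(1)g'(0)=0$, and $\Lambda''(0)=F''(1)g'(0)^2+F'(1)g''(0)=F'(1)(F^{-1})''(0)\,\Var_{\mu^*}[f]$, which is strictly positive by the hypotheses $F'(1)>0$, $(F^{-1})''(0)>0$, $\Var_{\mu^*}[f]>0$. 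Lemma~\ref{lemma:asymp_bound} then delivers \req{asymp_bound} with this value of $\Lambda''(0)$.

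Almost all of the analytic substance is imported: the genuinely hard estimate is Theorem~\ref{Kac_F_sobolev}, and the first part is a line-for-line analogue of Theorem~\ref{thm:sobolev_UQ}. The only points that require real care are keeping the $\pm$ bookkeeping consistent, so that the two one-sided UQ bounds come out of the two sign choices for $V_{\pm c}$, and justifying the interchange of derivative and integral in the computation of $\Lambda''(0)$ — but that interchange is exactly what the hypothesis ``$c\mapsto\mu^*[F^{-1}(V_c)]$ is smooth near $0$ and may be differentiated under the integral'' grants, so I would invoke it at that step rather than work it out. I do not anticipate any substantive obstacle beyond this routine bookkeeping.
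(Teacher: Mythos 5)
Your proposal is correct and follows essentially the same route the paper intends: the paper's "proof" is just the remark that it is analogous to the $\log$-Sobolev case (Theorem \ref{thm:sobolev_UQ}), and you have carried out that analogy correctly, substituting Theorem \ref{Kac_F_sobolev} for Theorem \ref{Kac_log_sobolev}, passing the semigroup bound through \req{Lambda_bound} and the Gibbs information inequality \req{goal_oriented_bound3}, and then computing $\Lambda''(0)$ by the chain rule (using $F(1)=0\Rightarrow F^{-1}(0)=1$ and $\int(f-\mu^*[f])\,d\mu^*=0$) to feed into Lemma \ref{lemma:asymp_bound}. The small observations you add — Jensen with convex $F^{-1}$ plus monotonicity of $F$ to get $\Lambda\geq 0$, and the remark that finiteness of the CGF on $(c_-,c_+)$ gives $f_T\in\mathcal{E}(P_T^{\mu^*})$ — are exactly the right consistency checks and are worth having explicit.
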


\section{Continuous-Time Jump Processes on General State Spaces}\label{app:jump}
As discussed in Section \ref{sec:discrete}, to apply our UQ bounds to the invariant measure of discrete-time Markov processes, $\mathcal{P}$ and $\widetilde{P}$, one needs to construct an ancillary continuous-time Markov process  with generators $\mathcal{P}-I$ and $\widetilde{\mathcal{P}}-I$, and also compute the associated relative entropy.  While the construction of continuous-time Markov processes from their generators is well known (see, for example,  Chapter 4.2 in \cite{ethier2009markov} or Chapter 3.3 in \cite{liggett2010continuous}), and the relative entropy computation is known in the countable-state-space case (see Section \ref{Example:CTMC}), we require a formula for the relative entropy in the general case of a Polish state space. To the best of our knowledge, this computation is new, though it closely mirrors the established results; hence we present only a short outline.

In order to obtain an explicit formula for the Radon-Nikodym derivative, and thereby compute the relative entropy, it is useful to utilize an explicit construction, as in the countable-state-space case (see, for example, Appendix 1 of \cite{kipnis2013scaling}), rather than invoking more general existence theorems:

Let $(\mathcal{X},\mathcal{B}_{\mathcal{X}})$ be a Polish space and $p(x,dy)$ be a probability kernel on $\mathcal{X}$.  Given $\lambda>0$  define the probability kernel, $p^J$,  on the Polish space $(\mathcal{X}\times (0,\infty),\mathcal{B}_{\mathcal{X}}\bigotimes\mathcal{B}_{(0,\infty)})$:
\begin{align}\label{pJ_def}
p^J((x,s),\cdot)=p(x,dy)\times \lambda e^{-\lambda t}dt.
\end{align}
For any probability measure $\pi$ on $(\mathcal{X},\mathcal{B}_{\mathcal{X}})$, let $P^\pi$  (for $\pi=\delta_x$ we simply write $P^x$) be the unique probability measure on $(\Omega,\mathcal{F})\equiv(\prod_{n=0}^\infty(\mathcal{X}\times (0,\infty)),\bigotimes_{n=0}^\infty(\mathcal{B}_{\mathcal{X}}\bigotimes\mathcal{B}_{(0,\infty)}))$ generated by the transition probabilities $p^J$ and initial distribution  $\pi\times (\lambda e^{-\lambda t}dt)$.  Also, define the jump process, jump intervals, and jump times:
\begin{align}
X^J_n\equiv\pi_1\circ \pi_n,\,\,\, \Delta_n\equiv\mu\circ\pi_n,\text{ for $n\in\mathbb{Z}_0$, }\,\, J_0\equiv 0,\,\, J_n\equiv\sum_{k=0}^{n-1}\Delta_k \text{ for $n\in\mathbb{Z}^+$,}
\end{align}
where $\pi_i$ denote projections onto components.  The jump rates are positive constants, so one obtains $J_n(\omega)\to\infty$ a.s. as $n\to\infty$.
 
$(X_n^J,\Delta_n)$ is a Markov process under  $P^\pi$ with transition probabilities $p^J$ and initial distribution $\pi\times (\lambda e^{\lambda t}dt)$. Use this to define the associated c{\`a}dl{\`a}g process
\begin{align}\label{Xt_def}
X_t(\omega)=X_n^J,\text{ where }t\in [J_n(\omega),J_{n+1}(\omega))
\end{align} 
and the probability kernels on $\mathcal{X}$,
\begin{align}\label{tilde_p_def}
p_t(x,A)\equiv P^x(X_t\in A),\,\,t\geq 0,\,\,x\in\mathcal{X}.
\end{align}
Finally,  let $\mathcal{F}_t$ be the natural filtration for $X_t$.

With this setup, we have the following theorem:
\begin{theorem}\label{theorem:markov_family}
 $(\Omega,\mathcal{F},\mathcal{F}_t,X_t,P^x)$, $x\in\mathcal{X}$, is a c{\`a}dl{\`a}g Markov family with transition probabilities $p_t$. More specifically:
 \begin{enumerate}
\item $(\Omega,\mathcal{F},\mathcal{F}_t)$, $t\geq 0$ is a filtered probability space and $X_t$ is an $\mathcal{X}$-valued, $\mathcal{F}_t$-adapted, c\`adl\`ag  process.
\item $p_t(x,dy)$, $t\geq 0$, are time homogeneous transition probabilities on $\mathcal{X}$.
\item $P^x$, $x\in\mathcal{X}$ are probability measures with $(X_0)_*P^x=\delta_x$ for each $x\in\mathcal{X}$.
\item For every measurable set $F$, $x\to P^x(F)$ is universally measurable.
\item For each $x\in\mathcal{X}$, $P^x(X_{t+s}\in B|\mathcal{F}_s)=p_t(X_s,B)$ $P^x$-a.s. In particular, $p_t(x,B)=P^x(X_t\in B)$.
\end{enumerate}

\end{theorem}

One also obtains realizability of the semigroup $\exp(t\lambda(\mathcal{P}-I))$ by a  probability kernel:
\begin{theorem}\label{thm:markov}
If $\mu^*$ is an invariant measure for $p$ then $\mu^*$ is invariant for ${p}_t$ for all $t\geq 0$ and  the bounded linear operators on $L^2(\mu^*)$,
\begin{align}
\mathcal{P}[f](x)\equiv \int f(y)p(x,dy),\,\,\,{\mathcal{P}}_t[f](x)\equiv\int f(y) p_t(x,dy),
\end{align}
satisfy
\begin{align}
{\mathcal{P}}_t=\exp(t\lambda(\mathcal{P}-I)),
\end{align}
for all $t\geq 0$, where the right-hand side is the operator exponential for bounded operators on $L^2(\mu^*)$.
\end{theorem}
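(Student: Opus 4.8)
The plan is to reduce the statement to the classical description of a continuous-time Markov chain with constant jump rate $\lambda$, transplanted to the Polish state space $\mathcal{X}$. The point is that the product form \req{pJ_def} of the kernel $p^J$ makes the following structure transparent: under $P^x$ the embedded jump chain $(X^J_n)_{n\geq 0}$ is a discrete-time Markov chain with transition kernel $p$ started from $x$, while the holding intervals $(\Delta_n)_{n\geq 0}$ are i.i.d.\ $\mathrm{Exp}(\lambda)$ and independent of the whole jump chain. Hence the jump-counting process $N_t\equiv\sup\{n\geq 0:J_n\leq t\}$ (finite on $\Omega$ since $J_n\to\infty$) is a rate-$\lambda$ Poisson process, independent of $(X^J_n)$, with $P^x(N_t=n)=e^{-\lambda t}(\lambda t)^n/n!$.

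First I would write down the transition kernel explicitly. By the definition \req{Xt_def} of $X_t$ one has $X_t=X^J_{N_t}$, so, using the independence of $N_t$ and $(X^J_n)$ together with $P^x(X^J_n\in A)=p^n(x,A)$ (the $n$-fold Chapman--Kolmogorov composition of $p$, which is standard),
\begin{align}\label{eq:kernel_poisson_expansion}
p_t(x,A)=P^x(X_t\in A)=\sum_{n=0}^\infty e^{-\lambda t}\frac{(\lambda t)^n}{n!}\,p^n(x,A)
\end{align}
for every $t\geq 0$, $x\in\mathcal{X}$ and measurable $A$; the interchange of the sum with the probability is just countable additivity.

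Next I would deduce the two assertions from \req{eq:kernel_poisson_expansion}. Invariance of $\mu^*$ for $p$ gives $\int p^n(x,A)\,\mu^*(dx)=\mu^*(A)$ for all $n$ by induction, and integrating \req{eq:kernel_poisson_expansion} against $\mu^*$ (Tonelli, the summands being nonnegative) yields $\int p_t(x,A)\,\mu^*(dx)=\mu^*(A)$ because $\sum_n e^{-\lambda t}(\lambda t)^n/n!=1$; thus $\mu^*$ is invariant for every $p_t$, and in particular $\mathcal{P}_t$ is a well-defined bounded operator on $L^2(\mu^*)$. Jensen's inequality together with invariance shows $\mathcal{P}$ (hence each $\mathcal{P}^n$) is a contraction on $L^2(\mu^*)$. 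For bounded $f$, integrating $f$ against \req{eq:kernel_poisson_expansion} and using dominated convergence on partial sums gives $\mathcal{P}_t[f]=\sum_{n=0}^\infty e^{-\lambda t}\frac{(\lambda t)^n}{n!}\mathcal{P}^n[f]$, where $\mathcal{P}^n$ is the $n$-th operator power (equal to integration against $p^n$ by Fubini and Chapman--Kolmogorov). Since $\sum_n e^{-\lambda t}(\lambda t)^n/n!\,\|\mathcal{P}\|^n\leq e^{-\lambda t}e^{\lambda t}=1$, the operator series $\sum_n e^{-\lambda t}\frac{(\lambda t)^n}{n!}\mathcal{P}^n$ converges absolutely in operator norm on $L^2(\mu^*)$; by density of bounded functions it agrees with $\mathcal{P}_t$, and by the norm-convergent power series for the exponential of a bounded operator (and since $I$ commutes with $\mathcal{P}$) it equals $e^{-\lambda t}e^{\lambda t\mathcal{P}}=\exp(t\lambda(\mathcal{P}-I))$.

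I expect the main obstacle to be the measure-theoretic bookkeeping of the first step: carefully unwinding the construction of $P^x$ from the iterated kernel $p^J$ to confirm that the holding intervals really are i.i.d.\ exponential and independent of the jump chain (so that $N_t$ is Poisson and independent of $(X^J_n)$), together with the attendant measurability of the maps involved so that \req{eq:kernel_poisson_expansion} is meaningful. Once the product structure of $p^J$ is exploited this is the classical fact for continuous-time chains with bounded rates; everything afterward is a routine use of Tonelli, Jensen's inequality, and the functional calculus for bounded operators.
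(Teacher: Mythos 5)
Your proposal is correct and matches the paper's strategy: the paper likewise decomposes on $\{J_n\leq t<J_{n+1}\}=\{N_t=n\}$ and exploits the product form of $p^J$ (via its stated lemma on the conditional law of $J_{n+1}$) to arrive at the Poisson expansion $p_t(x,A)=\sum_{n\ge 0} e^{-\lambda t}\frac{(\lambda t)^n}{n!}p^n(x,A)$. The remaining steps --- invariance via Tonelli, contractivity via Jensen, and the operator-exponential identity via the norm-convergent series of operator powers --- are the same routine arguments in both.
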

These results are all straightforward to prove by using the same strategy as the discrete-state-space case.  

The formula for the Radon-Nikodym derivative for two measures constructed as above is also straightforward; the only complication is that here, the jump chain $(X_n^J,\Delta_n)$ is generally not recoverable from $X_t$; specifically, the $J_n$ are not $\mathcal{F}_t$-stopping times (this is because `jumps' do not necessarily change the state, unlike the construction commonly used when the state space is discrete).  Hence, we must derive a formula for the Radon-Nikodym derivative on the enlarged filtration
\begin{align}
\mathcal{G}_t\equiv\sigma(1_{J_n\leq s},X_{J_n\wedge s}:s\leq t,n\geq 0).
\end{align}
Otherwise, the computation closely mirrors the discrete case (again, see \cite{kipnis2013scaling}) and one arrives at:
\begin{theorem}\label{thm:radon_nikodym}
Suppose we have probability measures $\widetilde\mu$, $\mu$  and probability kernels $\widetilde{p}(x,dy)$, $p (x,dy)$ on $\mathcal{X}$.  Assume that $\widetilde\mu\ll\mu$ and $\widetilde{p}(x,\cdot)\ll p (x,\cdot)$ for $\widetilde\mu$ a.e. $x$.  In particular, we have $h\in L^+(\mathcal{X}\times\mathcal{X})$ such that 
\begin{align}\label{h_def}
\widetilde{p}(x,dy)=h(x,y)p (x,dy) \text{ for }\widetilde\mu\text{ a.e. }x.
\end{align} 

Given $\lambda>0$, construct the probability measures $\widetilde{P}^{\widetilde\mu}$ and $P^{\mu}$  on $\Omega$ from $\widetilde p$ and $p$ respectively, and define the process $X_t$ as in \req{Xt_def}.

Suppose $(\widetilde{\mathcal{P}}^\dagger)^n[\widetilde\mu]\ll\widetilde\mu$ for all $n$ (in particular, if $\widetilde\mu$ is invariant for $\widetilde{p}$). Then for any $t\geq 0$ we have $\widetilde P^{\widetilde\mu}|_{\mathcal{G}_t}\ll P^{\mu}|_{\mathcal{G}_t}$ and
\begin{align}\label{eq:radon_nikodym}
\frac{d\widetilde P^{\widetilde\mu}|_{\mathcal{G}_t}}{dP^{\mu}|_{\mathcal{G}_t}}=\frac{d\widetilde\mu}{d\mu}(X_0)\prod_{n\geq 1:J_n\leq t}h(X_{J_{n-1}\wedge t},X_{J_n\wedge t}).
\end{align}
\end{theorem}
By an analogous computation to the CTMC case, \req{eq:rel_ent_CTMC}, the formula for the Radon-Nikodym derivative (\ref{eq:radon_nikodym}) leads to the following formula for the relative entropy:
\begin{theorem}\label{thm:rel_ent}
Suppose we have probability measures $\widetilde\mu$, $\mu$  and probability kernels $\widetilde{p}(x,dy)$, $p (x,dy)$ on $\mathcal{X}$.  Assume that $\widetilde\mu\ll\mu$ and $\widetilde{p}(x,\cdot)\ll p (x,\cdot)$ for $\widetilde\mu$ a.e. $x$. 

Suppose  $(\widetilde{\mathcal{P}}^\dagger)^n[\widetilde\mu]\ll\widetilde\mu$ for all $n$ (in particular, if $\widetilde\mu$ is invariant for $\widetilde{p}$).  Then for any $t\geq 0$ we have
\begin{align}
R(\widetilde P^{\widetilde\mu}|_{\mathcal{G}_t}||P^{\mu}|_{\mathcal{G}_t})=R(\widetilde\mu||\mu)+ \lambda \int_0^t \widetilde{E}^{\widetilde\mu}\left[\int\log(h(X_s,z)) h(X_s,z)p (X_s,dz)\right]ds,
\end{align}
where $h$ is as defined in \req{h_def}. 
\end{theorem}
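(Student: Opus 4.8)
The starting point is the Radon--Nikodym formula from Theorem \ref{thm:radon_nikodym}: on $\mathcal{G}_t$,
\begin{align}
\frac{d\widetilde P^{\widetilde\mu}|_{\mathcal{G}_t}}{dP^{\mu}|_{\mathcal{G}_t}}=\frac{d\widetilde\mu}{d\mu}(X_0)\prod_{n\geq 1:J_n\leq t}h(X_{J_{n-1}\wedge t},X_{J_n\wedge t}).
\end{align}
Since $\widetilde P^{\widetilde\mu}|_{\mathcal{G}_t}\ll P^{\mu}|_{\mathcal{G}_t}$, the relative entropy is the $\widetilde E^{\widetilde\mu}$-expectation of the logarithm of this density. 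Taking logs turns the product into a sum, and because $J_n(\omega)\to\infty$ for every $\omega$ the sum is finite $\widetilde P^{\widetilde\mu}$-a.s. (all sums below are over the finitely many $n$ with $J_n\le t$). This gives
\begin{align}
R(\widetilde P^{\widetilde\mu}|_{\mathcal{G}_t}||P^{\mu}|_{\mathcal{G}_t})=R(\widetilde\mu||\mu)+\sum_{n\geq 1}\widetilde E^{\widetilde\mu}\left[1_{J_n\le t}\log h\!\left(X_{J_{n-1}\wedge t},X_{J_n\wedge t}\right)\right],
\end{align}
where the identification of the first term with $R(\widetilde\mu||\mu)$ uses $\widetilde\mu\ll\mu$; nonnegativity of the summands (after the Lemma \ref{lemma:jump_avg} step below they become conditional entropies, hence $\ge 0$) justifies interchanging sum and integral by monotone convergence.

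Next I would rewrite each term of the sum in terms of the jump chain. On $\{J_n\le t\}$ one has $J_{n-1}\wedge t=J_{n-1}$ and $J_n\wedge t=J_n$, so the $n$-th summand equals $\widetilde E^{\widetilde\mu}[1_{J_n\le t}\log h(X^J_{n-1},X^J_n)]$. Conditioning on $(X^J_\ell,\Delta_\ell:\ell\le n-1)$ and using Lemma \ref{lemma:jump_avg} to average out the state $X^J_n$ reached at the $n$-th jump (the event $\{J_n\le t\}$ is measurable with respect to $(\Delta_0,\dots,\Delta_{n-1})$, so it passes through the lemma as a fixed factor), I get
\begin{align}
\widetilde E^{\widetilde\mu}\!\left[1_{J_n\le t}\log h(X^J_{n-1},X^J_n)\right]
=\widetilde E^{\widetilde\mu}\!\left[1_{J_n\le t}\int \log h(X^J_{n-1},z)\,\widetilde p(X^J_{n-1},dz)\right],
\end{align}
and since $\widetilde p(x,dz)=h(x,z)p(x,dz)$ the inner integral becomes $\int\log h(X^J_{n-1},z)\,h(X^J_{n-1},z)\,p(X^J_{n-1},dz)=:\Phi(X^J_{n-1})$, a nonnegative measurable function of $X^J_{n-1}$ alone. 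Noting that $\{J_n\le t\}=\{J_{(n-1)+1}\le t\}$, summing over $n\ge 1$ and applying Lemma \ref{lemma:t_int} to $F=\Phi$ turns the sum $\sum_{m\ge 0}\widetilde E^{\widetilde\mu}[\Phi(X^J_m)1_{J_{m+1}\le t}]$ into $\lambda\,\widetilde E^{\widetilde\mu}[\int_0^t\Phi(X_s)\,ds]$, which is exactly the claimed integral term after unfolding the definition of $\Phi$ and using that $X_s=X^J_m$ on $[J_m,J_{m+1})$.

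Finally, the stated inequality $R(\widetilde{P}^{\widetilde\mu}|_{\mathcal{F}_t}||P^{\mu}|_{\mathcal{F}_t})\le R(\widetilde P^{\widetilde\mu}|_{\mathcal{G}_t}||P^{\mu}|_{\mathcal{G}_t})$ is immediate from $\mathcal{F}_t\subset\mathcal{G}_t$ and monotonicity of relative entropy under restriction to a sub-$\sigma$-algebra (the data-processing inequality), which follows from the variational characterization in Proposition \ref{prop:Gibbs} or directly by conditional Jensen. The main obstacle I anticipate is bookkeeping in the second step: one must verify carefully that on the decomposition of $\Omega$ into the disjoint events $\{J_n\le t<J_{n+1}\}$ the truncations $X_{J_{k}\wedge t}$ in the density reduce to $X^J_k$ for $k\le n$, so that the product really has exactly $n$ genuine factors and Lemma \ref{lemma:jump_avg} applies cleanly to strip off the last one; the rest is routine application of the two averaging lemmas.
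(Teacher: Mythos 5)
Your proposal is correct and matches the paper's approach exactly: the paper states this result as a direct consequence of Theorem~\ref{thm:radon_nikodym} combined with Lemmas~\ref{lemma:jump_avg} and~\ref{lemma:t_int}, and you have fleshed out precisely those three ingredients (take the log of the $\mathcal{G}_t$-density, strip off the last jump state via Lemma~\ref{lemma:jump_avg} with $\widetilde p$ replacing $p$ since the expectation is under $\widetilde P^{\widetilde\mu}$, re-index, and apply Lemma~\ref{lemma:t_int}), together with data-processing for the final $\mathcal{F}_t\subset\mathcal{G}_t$ inequality. The only slightly loose point is that your monotone-convergence justification for swapping $\sum_n$ and $\widetilde E^{\widetilde\mu}$ appeals to nonnegativity of the \emph{post}-averaging summands while the \emph{pre}-averaging terms $1_{J_n\le t}\log h(X^J_{n-1},X^J_n)$ can be negative; this is repaired by first applying Lemma~\ref{lemma:jump_avg} to each finite partial sum and only then passing to the limit, a detail the paper also leaves implicit.
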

It is also useful to note that, by the data processing inequality (see Theorem 14 in \cite{1705001}), $\mathcal{F}_t\subset\mathcal{G}_t$ implies
\begin{align}
R(\widetilde{P}^{\widetilde\mu}|_{\mathcal{F}_t}||P ^{\mu}|_{\mathcal{F}_t})\leq R(\widetilde P^{\widetilde\mu}|_{\mathcal{G}_t}||P^{\mu}|_{\mathcal{G}_t}).
\end{align}
When $\widetilde{\mu}$ is an invariant measure we obtain the following simpler formula:
\begin{corollary}\label{corollary:rel_ent_bound}
Suppose we have probability measures $\widetilde\mu^*$, $\mu$  and probability kernels $\widetilde{p}(x,dy)$, $p (x,dy)$ on $\mathcal{X}$. If $\widetilde\mu^*$ is invariant for $\widetilde{p}$ then for all $t>0$
\begin{align}
R(\widetilde{P}^{\widetilde\mu^*}|_{\mathcal{F}_t}||P^{\mu}|_{\mathcal{F}_t})\leq R(\widetilde\mu^*||\mu)+\lambda t\int R(\widetilde{p}(x,\cdot)||p (x,\cdot))d\widetilde\mu^*.
\end{align}
\end{corollary}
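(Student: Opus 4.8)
The plan is to read off the bound from the exact relative-entropy formula in Theorem \ref{thm:rel_ent}, after recognizing its inner integral as a one-step relative entropy and using invariance of $\widetilde\mu^*$ to kill the time dependence.

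First I would dispose of the degenerate case: if $\widetilde\mu^*\not\ll\mu$, or if $\widetilde p(x,\cdot)\not\ll p(x,\cdot)$ on a set of positive $\widetilde\mu^*$-measure, then $R(\widetilde\mu^*||\mu)=+\infty$ or $\int R(\widetilde p(x,\cdot)||p(x,\cdot))d\widetilde\mu^*=+\infty$ by \req{rel_ent_def}, so the right-hand side is $+\infty$ and there is nothing to prove. Hence I may assume $\widetilde\mu^*\ll\mu$ and $\widetilde p(x,\cdot)\ll p(x,\cdot)$ for $\widetilde\mu^*$-a.e.\ $x$, and take $h$ as in \req{h_def}. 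Since $\widetilde\mu^*$ is invariant for $\widetilde p$ we have $(\widetilde{\mathcal P}^*)^n[\widetilde\mu^*]=\widetilde\mu^*$ for every $n$, so in particular $(\widetilde{\mathcal P}^*)^n[\widetilde\mu^*]\ll\widetilde\mu^*$ and the hypotheses of Theorem \ref{thm:rel_ent} hold with $\widetilde\mu=\widetilde\mu^*$.

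Applying Theorem \ref{thm:rel_ent} then gives
\begin{align}
R(\widetilde P^{\widetilde\mu^*}|_{\mathcal{G}_t}||P^{\mu}|_{\mathcal{G}_t})=R(\widetilde\mu^*||\mu)+\lambda\int_0^t\widetilde E^{\widetilde\mu^*}\left[\int\log(h(X_s,z))h(X_s,z)p(X_s,dz)\right]ds.\notag
\end{align}
Because $h(x,\cdot)=d\widetilde p(x,\cdot)/dp(x,\cdot)$ for $\widetilde\mu^*$-a.e.\ $x$, the inner integral equals $\int\log\!\big(d\widetilde p(x,\cdot)/dp(x,\cdot)\big)d\widetilde p(x,\cdot)=R(\widetilde p(x,\cdot)||p(x,\cdot))$ for $\widetilde\mu^*$-a.e.\ $x$; in particular it is nonnegative and measurable in $x$. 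The key step is then invariance in time: by Theorem \ref{thm:markov}, $\widetilde\mu^*$ invariant for $\widetilde p$ implies $\widetilde\mu^*$ invariant for $\widetilde p_t$, so under $\widetilde P^{\widetilde\mu^*}$ the law of $X_s$ is $\widetilde\mu^*$ for every $s\geq0$ (so the $\widetilde\mu^*$-null exceptional set for the identification of $h$ is also null for the law of $X_s$). Hence
\begin{align}
\widetilde E^{\widetilde\mu^*}\left[\int\log(h(X_s,z))h(X_s,z)p(X_s,dz)\right]=\int R(\widetilde p(x,\cdot)||p(x,\cdot))\,\widetilde\mu^*(dx)\notag
\end{align}
for every $s$, and the $ds$-integral is this constant times $t$.

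Combining the two displays yields $R(\widetilde P^{\widetilde\mu^*}|_{\mathcal{G}_t}||P^{\mu}|_{\mathcal{G}_t})=R(\widetilde\mu^*||\mu)+\lambda t\int R(\widetilde p(x,\cdot)||p(x,\cdot))d\widetilde\mu^*$, and since $\mathcal{F}_t\subset\mathcal{G}_t$, the monotonicity of relative entropy under restriction of the $\sigma$-algebra (the inequality $R(\widetilde{P}^{\widetilde\mu}|_{\mathcal{F}_t}||P^{\mu}|_{\mathcal{F}_t})\leq R(\widetilde P^{\widetilde\mu}|_{\mathcal{G}_t}||P^{\mu}|_{\mathcal{G}_t})$ recorded in Theorem \ref{thm:rel_ent}) gives the claimed bound. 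The only points needing any care are the reduction to the absolutely continuous case and the invariance argument for the marginal law of $X_s$; everything else is bookkeeping, so I do not expect a genuine obstacle.
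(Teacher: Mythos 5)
Your proof is correct and is essentially the argument the paper intends (the corollary is stated without a printed proof, but as an immediate consequence of Theorem \ref{thm:rel_ent}). You reduce to the absolutely continuous case, apply the exact formula on $\mathcal{G}_t$, identify the inner integral with the one-step relative entropy $R(\widetilde p(x,\cdot)||p(x,\cdot))$, use invariance of $\widetilde\mu^*$ (via Theorem \ref{thm:markov}) to make the $s$-integrand constant, and then pass to $\mathcal{F}_t\subset\mathcal{G}_t$ by the recorded monotonicity inequality; the remark that the $\widetilde\mu^*$-null exceptional set for $h$ is also null for the law of $X_s$ is exactly the point that makes the time-integration step clean.
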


This is the  relative entropy bound that was used in Section \ref{sec:discrete}, when applying our UQ results to invariant measures of discrete-time Markov processes.

\subsection*{Acknowledgments}
The research of L. R.-B. was partially supported by the National Science Foundation (NSF) under the grant DMS-1515712 and the 
Air Force Office of Scientific Research (AFOSR) under the grant FA-9550-18-1-0214.

\bibliographystyle{unsrt}
\bibliography{refs}

\begin{thebibliography}{10}

\bibitem{chowdhary_dupuis_2013}
K.~Chowdhary and P.~Dupuis.
\newblock Distinguishing and integrating aleatoric and epistemic variation in
  uncertainty quantification.
\newblock {\em ESAIM: Mathematical Modelling and Numerical Analysis},
  47(3):635–662, 2013.

\bibitem{DKPP}
P.~Dupuis, M.A. Katsoulakis, Y.~Pantazis, and P.~Plech{\'a}{\v c}.
\newblock {Path-space information bounds for uncertainty quantification and
  sensitivity analysis of stochastic dynamics}.
\newblock {\em SIAM/ASA Journal on Uncertainty Quantification}, 4(1):80--111,
  2016.

\bibitem{doi:10.1063/1.4789612}
Y.~Pantazis and M.A. Katsoulakis.
\newblock A relative entropy rate method for path space sensitivity analysis of
  stationary complex stochastic dynamics.
\newblock {\em The Journal of Chemical Physics}, 138(5):054115, 2013.

\bibitem{doi:10.1137/15M1047271}
K.~Gourgoulias, M.A. Katsoulakis, and L.~Rey-Bellet.
\newblock {Information Metrics For Long-Time Errors in Splitting Schemes For
  Stochastic Dynamics and Parallel Kinetic Monte Carlo}.
\newblock {\em SIAM Journal on Scientific Computing}, 38(6):A3808--A3832, 2016.

\bibitem{KRW}
M.A. Katsoulakis, L.~Rey-Bellet, and J.~Wang.
\newblock Scalable information inequalities for uncertainty quantification.
\newblock {\em Journal of Computational Physics}, 336:513 -- 545, 2017.

\bibitem{DKPR}
P.~{Dupuis}, M.~A. {Katsoulakis}, Y.~{Pantazis}, and L.~{Rey-Bellet}.
\newblock {Sensitivity analysis for rare events based on R{\'e}nyi divergence}.
\newblock {\em ArXiv e-prints}, May 2018.

\bibitem{GKRW}
K.~Gourgoulias, M.A. Katsoulakis, L.~Rey{-}Bellet, and J.~Wang.
\newblock How biased is your model? {C}oncentration inequalities, information
  and model bias.
\newblock {\em ArXiv e-prints}, 2017.

\bibitem{BREUER20131552}
T.~Breuer and I.~Csisz{\'a}r.
\newblock Systematic stress tests with entropic plausibility constraints.
\newblock {\em Journal of Banking \& Finance}, 37(5):1552 -- 1559, 2013.

\bibitem{doi:10.1287/moor.2015.0776}
H.~Lam.
\newblock Robust sensitivity analysis for stochastic systems.
\newblock {\em Mathematics of Operations Research}, 41(4):1248--1275, 2016.

\bibitem{GlassermanXu}
P.~Glasserman and X.~Xu.
\newblock Robust risk measurement and model risk.
\newblock {\em Quantitative Finance}, 14(1):29--58, 2014.

\bibitem{10.2307/30040824}
A.Y. Mitrophanov.
\newblock Sensitivity and convergence of uniformly ergodic {M}arkov chains.
\newblock {\em Journal of Applied Probability}, 42(4):1003--1014, 2005.

\bibitem{ferre2013}
D.~Ferr{\'e}, L.~Herv{\'e}, and J.~Ledoux.
\newblock Regular perturbation of {V} -geometrically ergodic {M}arkov chains.
\newblock {\em J. Appl. Probab.}, 50(1):184--194, 03 2013.

\bibitem{Alquier2016}
P.~Alquier, R.~Friel, N.and~Everitt, and A.~Boland.
\newblock Noisy {M}onte {C}arlo: convergence of {M}arkov chains with
  approximate transition kernels.
\newblock {\em Statistics and Computing}, 26(1):29--47, Jan 2016.

\bibitem{Medina-Aguayo2016}
F.~J. Medina-Aguayo, A.~Lee, and G.~O. Roberts.
\newblock Stability of noisy {M}etropolis--{H}astings.
\newblock {\em Statistics and Computing}, 26(6):1187--1211, Nov 2016.

\bibitem{pmlr-v32-bardenet14}
R.~Bardenet, A.~Doucet, and C.~Holmes.
\newblock Towards scaling up {M}arkov chain {M}onte {C}arlo: an adaptive
  subsampling approach.
\newblock In Eric~P. Xing and Tony Jebara, editors, {\em Proceedings of the
  31st International Conference on Machine Learning}, volume~32 of {\em
  Proceedings of Machine Learning Research}, pages 405--413, Bejing, China,
  22--24 Jun 2014. PMLR.

\bibitem{2015arXiv150803387J}
J.E. {Johndrow}, J.C. {Mattingly}, S.~{Mukherjee}, and D.~{Dunson}.
\newblock {Optimal approximating Markov chains for Bayesian inference}.
\newblock {\em arXiv e-prints}, page arXiv:1508.03387, Aug 2015.

\bibitem{rudolf2018}
Daniel Rudolf and Nikolaus Schweizer.
\newblock Perturbation theory for {M}arkov chains via {W}asserstein distance.
\newblock {\em Bernoulli}, 24(4A):2610--2639, 11 2018.

\bibitem{2014arXiv1405.0182P}
N.S. {Pillai} and A.~{Smith}.
\newblock {Ergodicity of Approximate MCMC Chains with Applications to Large
  Data Sets}.
\newblock {\em arXiv e-prints}, page arXiv:1405.0182, May 2014.

\bibitem{2016arXiv160506420H}
J.H. {Huggins} and J.~{Zou}.
\newblock {Quantifying the accuracy of approximate diffusions and Markov
  chains}.
\newblock {\em arXiv e-prints}, page arXiv:1605.06420, May 2016.

\bibitem{WU2000435}
L.~Wu.
\newblock A deviation inequality for non-reversible {M}arkov processes.
\newblock {\em Annales de l'Institut Henri Poincare (B) Probability and
  Statistics}, 36(4):435 -- 445, 2000.

\bibitem{lezaud:hal-00940906}
P.~Lezaud.
\newblock {Chernoff and Berry-Ess{\'e}en inequalities for Markov Processes}.
\newblock {\em {ESAIM: Probability and Statistics}}, 5:pp 183--201, December
  2001.
\newblock http://www.sherpa.ac.uk/romeo/issn/1292-8100/.

\bibitem{cattiaux_guillin_2008}
P.~Cattiaux and A.~Guillin.
\newblock Deviation bounds for additive functionals of {M}arkov processes.
\newblock {\em ESAIM: Probability and Statistics}, 12:12–29, 2008.

\bibitem{Guillin2009}
A.~Guillin, C.~L{\'e}onard, L.~Wu, and N.~Yao.
\newblock Transportation-information inequalities for {M}arkov processes.
\newblock {\em Probability Theory and Related Fields}, 144(3):669--695, Jul
  2009.

\bibitem{doi:10.1137/S0040585X97986667}
F.~Gao, A.~Guillin, and L.~Wu.
\newblock {Bernstein-type Concentration Inequalities for Symmetric Markov
  Processes}.
\newblock {\em Theory of Probability \& Its Applications}, 58(3):358--382,
  2014.

\bibitem{Dolbeault2009}
J.~Dolbeault, C.~Mouhot, and C.~Schmeiser.
\newblock Hypocoercivity for kinetic equations with linear relaxation terms.
\newblock {\em C. R. Math. Acad. Sci. Paris}, 347(9-10):511--516, 2009.

\bibitem{Dolbeault2015}
J.~Dolbeault, C.~Mouhot, and C.~Schmeiser.
\newblock Hypocoercivity for linear kinetic equations conserving mass.
\newblock {\em Trans. Amer. Math. Soc.}, 367(6):3807--3828, 2015.

\bibitem{2018arXiv180804299D}
George {Deligiannidis}, Daniel {Paulin}, Alexandre {Bouchard-C{\^o}t{\'e}}, and
  Arnaud {Doucet}.
\newblock {Randomized Hamiltonian Monte Carlo as Scaling Limit of the Bouncy
  Particle Sampler and Dimension-Free Convergence Rates}.
\newblock {\em arXiv e-prints}, page arXiv:1808.04299, Aug 2018.

\bibitem{2019arXiv190711973B}
J.~{Birrell} and L.~{Rey-Bellet}.
\newblock {Concentration inequalities and performance guarantees for
  hypocoercive MCMC samplers}.
\newblock {\em arXiv e-prints}, page arXiv:1907.11973, Jul 2019.

\bibitem{BLM}
S.~Boucheron, G.~Lugosi, and P.~Massart.
\newblock {\em Concentration inequalities}.
\newblock Oxford University Press, Oxford, 2013.

\bibitem{DemboZeitouni}
A.~Dembo and O.~Zeitouni.
\newblock {\em Large Deviations Techniques and Applications}.
\newblock Stochastic Modelling and Applied Probability. Springer Berlin
  Heidelberg, 2009.

\bibitem{dupuis2011weak}
P.~Dupuis and R.S. Ellis.
\newblock {\em A Weak Convergence Approach to the Theory of Large Deviations}.
\newblock Wiley Series in Probability and Statistics. Wiley, 2011.

\bibitem{doi:10.1111/mafi.12050}
T.~Breuer and I.~Csisz{\'a}r.
\newblock Measuring distribution model risk.
\newblock {\em Mathematical Finance}, 26(2):395--411.

\bibitem{Yosida1971}
K.~Yosida.
\newblock {\em Functional Analysis, 3rd edn.}
\newblock Springer, 1971.

\bibitem{engel2006short}
K.J. Engel and R.~Nagel.
\newblock {\em A Short Course on Operator Semigroups}.
\newblock Universitext - Springer-Verlag. Springer, 2006.

\bibitem{10.2307/3216060}
V.~Girardin and N.~Limnios.
\newblock On the entropy for semi-{M}arkov processes.
\newblock {\em Journal of Applied Probability}, 40(4):1060--1068, 2003.

\bibitem{kipnis2013scaling}
C.~Kipnis and C.~Landim.
\newblock {\em Scaling Limits of Interacting Particle Systems}.
\newblock Grundlehren der mathematischen Wissenschaften. Springer Berlin
  Heidelberg, 2013.

\bibitem{wang2006functional}
F.~Wang.
\newblock {\em Functional Inequalities Markov Semigroups and Spectral Theory}.
\newblock The Science Series of the Contemporary Elite Youth. Elsevier Science,
  2006.

\bibitem{bakry2008}
D.~Bakry, F.~Barthe, P.~Cattiaux, and A.~Guillin.
\newblock {A simple proof of the Poincar{\'e} inequality for a large class of
  probability measures}.
\newblock {\em Electron. Commun. Probab.}, 13:60--66, 2008.

\bibitem{BAKRY2008727}
D.~Bakry, P.~Cattiaux, and A.~Guillin.
\newblock Rate of convergence for ergodic continuous {M}arkov processes:
  {L}yapunov versus {P}oincar{\'e}.
\newblock {\em Journal of Functional Analysis}, 254(3):727 -- 759, 2008.

\bibitem{Chafai}
D.~Chafa{\"i}.
\newblock Binomial-{P}oisson entropic inequalities and the {M}/{M}/$\infty$
  queue.
\newblock {\em ESAIM: PS}, 10:317--339, 2006.

\bibitem{cattiaux:hal-00461085}
P.~Cattiaux, A.~Guillin, and C.~Roberto.
\newblock {Poincar{\'e} inequality and the Lp convergence of semi-groups}.
\newblock {\em {Electronic Communications in Probability}}, 15:270--280, 2010.

\bibitem{HarierNotes}
M.~Hairer.
\newblock Convergence of {M}arkov processes, 2010.

\bibitem{HairerMattingly2011}
M.~Hairer and J.~Mattingly.
\newblock Yet another look at {H}arris’ ergodic theorem for {M}arkov chains.
\newblock In {\em Seminar on Stochastic Analysis, Random Fields and
  Applications VI}, pages 109--117. 2011.

\bibitem{rothaus1978}
O.~S. Rothaus.
\newblock Lower bounds for eigenvalues of regular {S}turm-{L}iouville operators
  and the logarithmic {S}obolev inequality.
\newblock {\em Duke Math. J.}, 45(2):351--362, 06 1978.

\bibitem{CarlenLoss}
E.A. Carlen and M.~Loss.
\newblock Logarithmic {S}obolev inequalities and spectral gaps.
\newblock In {\em Recent Advances in the Theory and Applications of Mass
  Transport. Contemp. Math., vol. 353}, page 53–60. 2004.

\bibitem{BakryEmery}
D.~Bakry and M.~Emery.
\newblock Hypercontractivit{\'e} do semi-groups de diffusion.
\newblock {\em C.R. Acad. Sci. Paris S{\'e}r I Math}, 299:775--778, 1984.

\bibitem{levin2017markov}
D.A. Levin and Y.~Peres.
\newblock {\em Markov Chains and Mixing Times: Second Edition}.
\newblock MBK. American Mathematical Society, 2017.

\bibitem{diaconis1996}
P.~Diaconis and L.~Saloff-Coste.
\newblock Logarithmic {S}obolev inequalities for finite {M}arkov chains.
\newblock {\em Ann. Appl. Probab.}, 6(3):695--750, 08 1996.

\bibitem{Diaconis1993}
P.~Diaconis and L.~Saloff-Coste.
\newblock Comparison theorems for reversible {M}arkov chains.
\newblock {\em The Annals of Applied Probability}, 3(3):696--730, 1993.

\bibitem{janssen2006applied}
J.~Janssen and R.~Manca.
\newblock {\em Applied Semi-Markov Processes}.
\newblock SpringerLink: Springer e-Books. Springer US, 2006.

\bibitem{limnios2012semi}
N.~Limnios and G.~Oprisan.
\newblock {\em Semi-Markov Processes and Reliability}.
\newblock Statistics for Industry and Technology. Birkh{\"a}user Boston, 2012.

\bibitem{doi:10.1002/asm.3150100403}
M.~J. Faddy.
\newblock Examples of fitting structured phase–type distributions.
\newblock {\em Applied Stochastic Models and Data Analysis}, 10(4):247--255,
  1994.

\bibitem{bladt2017matrix}
M.~Bladt and B.F. Nielsen.
\newblock {\em Matrix-Exponential Distributions in Applied Probability}.
\newblock Probability Theory and Stochastic Modelling. Springer US, 2017.

\bibitem{karatzas2014brownian}
I.~Karatzas and S.~Shreve.
\newblock {\em Brownian Motion and Stochastic Calculus}.
\newblock Graduate Texts in Mathematics. Springer New York, 2014.

\bibitem{ethier2009markov}
S.N. Ethier and T.G. Kurtz.
\newblock {\em Markov Processes: Characterization and Convergence}.
\newblock Wiley Series in Probability and Statistics. Wiley, 2009.

\bibitem{liggett2010continuous}
T.M. Liggett.
\newblock {\em Continuous Time Markov Processes: An Introduction}.
\newblock Graduate studies in mathematics. American Mathematical Society, 2010.

\bibitem{1705001}
F.~{Liese} and I.~{Vajda}.
\newblock On divergences and informations in statistics and information theory.
\newblock {\em IEEE Transactions on Information Theory}, 52(10):4394--4412, Oct
  2006.

\end{thebibliography}

\end{document}